\tikzset{node distance=1.5cm, auto}
\definecolor{darkgreen}{rgb}{0,0.45,0}
\newcommand{\Mod}{{\sf Mod}}
\newcommand{\Gal}{{\sf Gal}}
\newcommand{\coGal}{{\sf coGal}}
\newcommand{\biGal}{{\sf biGal}}
\newcommand{\BA}{{\sf BA}}
\newcommand{\BC}{{\sf BC}}
\newcommand{\Aut}{{\sf Aut}}
\newcommand{\YD}{{\sf YD}}
\def\lcan{\underline {\sf can}}
\def\rcan{\overline {\sf can}}
\newcommand{\Id}{\mathrm{id}}
\def\ot{\otimes}
\def\ul{\underline}
\def\ol{\overline}
\newcommand{\Bb}{\mathcal{B}}
\newcommand{\Cc}{\mathcal{C}}
\newcommand{\Dd}{\mathcal{D}}
\newcommand{\Gg}{\mathcal{G}}
\newcommand{\Mm}{\mathcal{M}}
\newcommand{\Vv}{\mathcal{V}}
\newcommand{\Zz}{\mathcal{Z}}
\def\tens{\otimes}
\def\boxtens{\boxtimes}
\def\cotens{\square}
\newcommand{\lbiprod}{{>\!\!\!\triangleleft\kern-.33em\cdot}}
\newcommand{\rbiprod}{{\cdot\kern-.33em\triangleright\!\!\!<}}
\newcommand{\la}{{\triangleright}}
\newcommand{\ra}{{\triangleleft}}
\def\bla{{\scalebox{.7}{\ensuremath \blacktriangleright}}}
\def\bra{{\scalebox{.7}{\ensuremath \blacktriangleleft}}}
\renewcommand{\o}{{}_{(1)}} 
\renewcommand{\t}{{}_{(2)}}
\renewcommand{\th}{{}_{(3)}}
\newcommand{\CC}{{\mathcal{C}}}
\newcommand{\CZ}{{\mathcal{Z}}}
\newcommand{\CM}{{\mathcal{M}}}
\newcommand{\can}{\mathsf{can}}
\newtheorem{prop}{Proposition}[section]
\newtheorem{proposition}[prop]{Proposition}
\newtheorem{lemma}[prop]{Lemma} 
\newtheorem{corollary}[prop]{Corollary} 
\newtheorem{theorem}[prop]{Theorem}
\theoremstyle{definition}
\newtheorem{definition}[prop]{Definition}
\newtheorem{example}[prop]{Example}
\newtheorem{examples}[prop]{Examples}
\newtheorem{remark}[prop]{Remark}
\newtheorem{remarks}[prop]{Remarks}
\newtheorem{notation}[prop]{Notation}
\newcommand{\kk}{k}%{{\mathbf k}}
\newcommand{\benu}{\begin{enumerate}}
\newcommand{\enu}{\end{enumerate}}
\newcommand{\beqna}{\begin{eqnarray}}
\newcommand{\eqna}{\end{eqnarray}}
\newcommand{\beqnast}{\begin{eqnarray*}}
\newcommand{\eqnast}{\end{eqnarray*}}
\newcommand{\beqn}{\begin{equation}}
\newcommand{\eqn}{\end{equation}}
\newcommand{\beqnst}{\begin{equation*}}
\newcommand{\eqnst}{\end{equation*}}
\newcommand{\bema}{\left ( \begin{array}}
\newcommand{\ema}{\end{array} \right )}
\newcommand{\Hom}{\operatorname{Hom}}
\newcommand{\End}{\operatorname{End}} 
\newif\ifstartedinmathmode
\newcommand\encircled[1]{%
  \relax\ifmmode\startedinmathmodetrue\else\startedinmathmodefalse\fi%
  \tikz[baseline,anchor=base]{%
  \node[draw,circle,outer sep=0pt,inner sep=0ex]
    {\ifstartedinmathmode$#1$\else#1\fi};}%
}
\begin{document}

\title[Yetter-Drinfeld modules, centers and groupoid-crossed bicategories]{Generalized Yetter-Drinfeld modules, the center of bi-actegories and groupoid-crossed braided bicategories}

\author[R. Aziz]{Ryan Aziz}
\email{ryan.kasyfil@sci.ui.ac.id}
\address{Departement of Mathematics, Universitas Indonesia, Indonesia}

\author[J. Vercruysse]{Joost Vercruysse}
\email{joost.vercruysse@ulb.be}
\address{D\'epartement de Math\'ematiques, Universit\'e Libre de Bruxelles, Belgium}

\date{\today}

\begin{abstract} 
We study the notion of the $E$-center $\mathcal{Z}_E(\mathcal{M})$ of a $(\mathcal{C}, \mathcal{D})$-biactegory (or bimodule category) $\mathcal{M}$, relative to an op-monoidal functor $E: \mathcal{C} \to \mathcal{D}$. Specializing this notion to the case $\mathcal{M} = {}_A\mathrm{Mod}$, $\mathcal{C}={}_H\mathrm{Mod}$, $\mathcal{D} = {}_K\mathrm{Mod}$, and $E \simeq C\otimes_H - : {}_H\mathrm{Mod} \to {}_K\mathrm{Mod}$, where $H$ and $K$ are bialgebras, $A$ is an $(H,K)$-bicomodule algebra and $C$ is a $(K,H)$-bimodule coalgebra, we show that this $E$-center is equivalent to the category of generalized Yetter-Drinfeld modules as introduced by Caenepeel, Militaru, and Zhu \cite{BookStef}. We introduce the notion of a double groupoid-crossed braided bicategory, generalizing Turaev's group-crossed braided monoidal categories, and show that generalized Yetter-Drinfeld modules can be organized in a double groupoid-crossed braided bicategory over the groupoids of Galois objects and co-objects.
\end{abstract}

\maketitle
\section{Introduction}

The construction \cite{Drinfeld} of the category of {\em Yetter Drinfeld modules} (or YD modules for short) over a Hopf algebra $H$ (say, with bijective antipode) provides an important tool for building braided monoidal categories. Those braided monoidal categories are in turn of central importance in for example Topological Quantum Field Theory, where they provide means to establish topological invariants. The fact that the category of YD modules is braided, rather than merely a monoidal category like the categories of modules and comodules over $H$, can be explained by the fact that it can be viewed as the {\em Drinfeld center} of any of the two latter categories, see e.g.\ \cite{ma:center}. On the other hand, when $H$ is finite dimensional, then the category of YD modules is moreover isomorphic to a category of modules over a suitably constructed Hopf algebra, called the {\em Drinfeld double} $D(H)$ of $H$. As its category of modules now is a braided monoidal category, $D(H)$ is not just a Hopf algebra but even a {\em quasi-triangular} Hopf algebra.

Over the years, several variations and generalizations of YD modules have surfaced. Notably, {\em anti-Yetter-Drinfeld modules} (AYD modules), as introduced in \cite{hajac}, appear naturally in the framework of Hopf cyclic cohomology. These modules, just as usual YD modules have both an action and coaction of the same Hopf algebra, however in their compatibility, the antipode is replaced by the inverse antipode. AYD modules do no longer form a monoidal category, but they are an {\em actegory}, also called module-category, over the monoidal category of usual YD modules, meaning that the tensor product of a YD module and an AYD module is again an AYD module. In order to describe the structure of tensor products of several AYD modules, the notion of {\em higher Yetter-Drinfeld} module was introduced in \cite{hassanzadeh}, for which other (odd) powers of the antipode appear in their compatibility conditions. These higher YD modules in turn were generalized in \cite{panaite} to so-called $(\alpha,\beta)$-Yetter-Drinfeld modules, replacing (even) powers of the antipode by arbitrary Hopf algebra automorphisms and where it was already observed that those modules are particular cases of the {\em generalized Yetter-Drinfeld modules} introduced earlier by Caenepeel, Militaru and Zhu in \cite{BookStef}. In the latter and most general setting, YD modules are simultaneous modules over an $(H,K)$-bicomodule algebra $A$ and comodules over a $(K,H)$-bimodule coalgebra satisfying a suitable compatibility, $H$ and $K$ being bialgebras.  

Already in \cite{BookStef} it was shown that the construction of the Drinfeld double can be generalized in such a way that (under finiteness conditions) generalized YD modules coincide with modules over a certain smash product build out of the bicomodule algebra $A$ and the dual of the bimodule coalgebra $C$. However, the relation between generalized YD modules and the center construction for monoidal categories, as well as the braided monoidal structure of generalized YD modules remained a mystery. In this paper we
solve the above problem by introducing the notion of an $E$-center $\Zz_E(\Mm)$ of a $(\Cc,\Dd)$-biactegory $\Mm$, where $\Cc$ and $\Dd$ are monoidal categories and $E: \Cc \to \Dd$ is an op-monoidal functor, see Definition \ref{def:centerbiact}. 
We prove in Theorem \ref{thm:mainthm} that the category of generalized YD modules is equivalent to the relative center $\CZ_E({}_A\Mod)$, where ${}_A\Mod$ is an $({}_H\Mod, {}_K\Mod)$-biactegory and the functor $E:{}_H\Mod\to {}_K\Mod$ is given by $C\ot_H-$. If furthermore $C$ is a bi-Galois co-object, then the category of generalized YD modules is an $E$-braided biactegory over the (classical) YD modules over $H$ and $K$, see Theorem~\ref{thm:generalbraid}.

We already explained that YD modules, providing constructions for braided monoidal categories, are as such linked to topological quantum field theories. In the last part of the paper, we explore further structure on the categories of generalized YD modules, which leads to a link with Turaev's Homotopy Quantum Field Theories. In \cite{panaite}, it was shown that $(\alpha,\beta)$-YD modules can be organized in a group-crossed braided monoidal category in the sense of Turaev. In Section \ref{se:bicats} we introduce {\em groupoid-crossed braided bicategories} as a $2$-dimensional version of Turaev's notion and show that generalized YD modules can be endowed with such a structure.

\subsection*{Acknowledgment}
This work was started while RA was working at the Universit\'e Libre de Bruxelles on a projected funded by European Union's Horizon 2020 research and innovation programme under the Marie Sklodowska-Curie grant agreement No 801505. 

JV thanks the FWB for support trough the ARC project ``From algebra to combinatorics, and back''.

\section{Yetter-Drinfeld Structures}\label{sec:YDstruc}

\subsection{Generalized Yetter-Drinfeld modules}\label{sec:genYD}
Throughout the paper, we work over a commutative ring $\kk$.
A (left,right) \textit{Yetter-Drinfeld datum} (or YD datum) is a four-tuple $(H,K,A,C)$ consisting of bialgebras $H$ and $K$, an $(H,K)$-bicomodule algebra $A$ with coactions
\[\Delta_L(a) =  a_{[-1]} \tens a_{[0]} \in H \tens A, \qquad \Delta_R(a) = a_{[0]} \tens a_{[1]} \in A\tens K,\] and a $(K,H)$-bimodule coalgebra $C$ with actions %left and right action \
$\la : K\tens C \to C$ and $\ra : C \tens H \to C.$

\begin{definition}[{\cite[Section 4.4]{BookStef}}]
Let $(H, K, A, C)$ be a YD datum. A left-right generalized Yetter-Drinfeld module over a YD datum is a $\kk$-module $M$ such that $M$ is a left $A$-module with action $\cdot: A\tens M \to M$ and a right $C$-comodule with coaction $\rho^r: M\to M\tens C$ denoted by $\rho^{r}(m) = m_{[0]}\tens m_{[1]}$, and the following compatibility hold for all $a\in A$ and $m\in M$
\begin{equation}\label{eq:genYDcomp1}
(a_{[0]} m)_{[0]} \tens (a_{[0]} m)_{[1]}\ra a_{[-1]} = a_{[0]} m_{[0]} \tens a_{[1]}\la m_{[1]}.
\end{equation}
\end{definition}
In the case $H$ is a Hopf algebra with invertible antipode, \eqref{eq:genYDcomp1} is equivalent to
\begin{equation}\label{eq:genYDcomp2}
(am)_{[0]} \tens (am)_{[1]} = a_{[0]} m_{[0]} 
\tens a_{[1]} \la m_{[1]} \ra S^{-1}_H(a_{[-1]}).
\end{equation}
The category of left-right generalized YD modules over the YD datum $(H,K,A,C)$ is denoted by ${_A\YD^C}(H,K)$. As usual, generalized YD modules come with another three flavors: right-right, left-left, and right-left, and all of these flavors are equivalent by replacing the YD datum by their opposite or/and co-opposite counterparts.

\subsection{Usual Yetter-Drinfeld modules over a Hopf algebra}\label{se:classical}

Taking a trivial YD-datum $(H,H,H,H)$, where $H$ is a Hopf algebra that is endowed with its regular actions and coactions to make it in a bi(co)module (co)algebra over itself, we denote the associated category of YD modules simply by ${}_H\YD^H(H,H)={}_H\YD^H$. This is now a braided monoidal category with a strong monoidal forgetful functor to $k$-modules. More precisely, given two YD modules $M$ and $N$, we define a YD structure on the $k$-linear tensor product via the following action and coaction, defined for all $h\in H$ and $m\ot n\in M\ot N$:
\begin{eqnarray*}
H\ot M\ot N\to M\ot N, &\qquad & h\cdot (m\ot n) = h_{(1)}m\ot h_{(2)}n,\\
M\ot N\to M\ot N\ot H, && \rho(m\ot n) = m_{[0]}\ot n_{[0]}\ot n_{[1]}m_{[1]}.
\end{eqnarray*}
The braiding is given by 
$$\beta: M\ot N\to N\ot M, \qquad \beta(m\ot n)= n_{[0]}\ot n_{[1]}m.$$

\subsection{$(\alpha,\beta)$-Yetter-Drinfeld modules and (higher) anti-Yetter-Drinfeld modules}\label{sec:(alpha,beta)YD}
\label{se:alphabeta}
Let $H$ be a Hopf algebra and consider Hopf-algebra automorphisms $\alpha, \beta, \gamma, \delta \in \Aut_{\sf Hopf}(H)$.
We consider the YD datum $(H,H, A={}^\alpha H^\beta, C={}_\gamma H_\delta)$, where ${}^\alpha H^\beta$ and ${}_\gamma H_\delta$ have the same Hopf algebra structure as $H$, but with the modified coactions and actions as follows
\[h_{[-1]}\tens h_{[0]} = \alpha(h\o) \tens h\t, \qquad h_{[0]}\tens h_{[1]} = h\o \tens \beta(h\t), \qquad h\la g = \gamma(h) g, \qquad h\ra g = h \delta(g).\]
The compatibility condition on generalised Yetter-Drinfeld modules turns into
\begin{equation}\label{eq:(alpha,beta)-YDcomp1}
(h\t m)_{[0]} \tens (h\t m)_{[1]}\delta\alpha (h\o) = h\o m_{[0]} \tens \gamma\beta(h\t) m_{[1]}
\end{equation}
which is equivalent to
\begin{equation*}
(hm)_{[0]} \tens (hm)_{[1]} = h\t m_{[0]} 
\tens \gamma\beta(h\th) m_{[1]} \delta\alpha(S^{-1} h\o)
\end{equation*}
Notice that one could get the same compatibily condition by choosing $(A={}^{\gamma \beta}H^{\delta \alpha}, C=H)$ or $(A=H, C={}_{\alpha\delta} H_{\beta \gamma})$. In other words, the YD datum only depends on the products $\gamma \beta$ and $\delta \alpha$, which can be represented by a single pair of automorphisms $(\alpha',\beta')$.

Taking into account the above observations, and following \cite{panaite}, we call a generalized YD module over the YD datum $(H,H, {}^\alpha H^\beta, H)$ an {\em $(\alpha,\beta)$-Yetter-Drinfeld module}. Such a module is a left $H$-module right $H$-comodule satisfying the compatibility condition \eqref{eq:(alpha,beta)-YDcomp1} above specialized to the case $\gamma=\delta=id_H$. We denote the category of such YD-modules as 
${}_H\YD^H(\alpha,\beta)$.

The following results immediately follows from the above observations and direct computation.

\begin{lemma}\label{le:alphabeta}
Consider Hopf algebra automorphisms $\alpha,\beta,\gamma\in\Aut_{\sf Hopf}(H)$. Then we have the following isomorphisms of bicomodule algebras and bimodule coalgebras
\begin{eqnarray*}
\gamma: {^{\alpha\gamma} H^{\beta\gamma}}\to {^{\alpha}H^{\beta}}, &\quad& \gamma: {_\alpha H_\beta}\to {_{\gamma\alpha}H_{\gamma\beta}}
\end{eqnarray*}
Consequentlly, we have the following isomorphisms of categories.
\begin{eqnarray*}
{_H\YD^H}(\alpha,\beta)&=&{_H\YD^{{}_\beta H_\alpha}}(H, H)\cong {_H\YD^{{}_{\alpha^{-1}\beta} H}}(H, H)\\
&=& {_{{}^\alpha H^\beta}\YD^H}(H, H) \cong {_{H^{\beta\alpha^{-1}}}}\YD^H(H, H)
\end{eqnarray*}
\end{lemma}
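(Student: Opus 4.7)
My plan is to split the lemma into two parts: the two explicit isomorphisms of bi(co)module (co)algebras, and the resulting isomorphisms of YD categories.

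For the bicomodule algebra isomorphism $\gamma:{}^{\alpha\gamma}H^{\beta\gamma}\to{}^\alpha H^\beta$, since $\gamma$ is a Hopf algebra automorphism it is already an algebra isomorphism, so the only thing to check is the intertwining of the twisted coactions. For the left coaction,
$(\Id\otimes\gamma)\circ\Delta_L^{\alpha\gamma}(h)=\alpha\gamma(h\o)\otimes\gamma(h\t)=\alpha(\gamma(h)\o)\otimes\gamma(h)\t=\Delta_L^\alpha\circ\gamma(h)$,
where the middle equality uses $(\gamma\otimes\gamma)\circ\Delta=\Delta\circ\gamma$; the right coaction is the mirror image. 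For the bimodule coalgebra isomorphism $\gamma:{}_\alpha H_\beta\to{}_{\gamma\alpha}H_{\gamma\beta}$, $\gamma$ is automatically a coalgebra isomorphism, and the intertwining of the twisted actions reduces to $\gamma$ being an algebra morphism: $\gamma(h\la c)=\gamma(\alpha(h)c)=\gamma\alpha(h)\gamma(c)$, which is precisely the left action of $h$ on $\gamma(c)$ in the target ${}_{\gamma\alpha}H_{\gamma\beta}$; the right action is analogous.

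For the chain of category identifications I would invoke the general (iso)functoriality of the construction $(A,C)\mapsto {}_A\YD^C(H,K)$: any bimodule coalgebra isomorphism $f:C\to C'$ induces an isomorphism ${}_A\YD^C(H,K)\cong{}_A\YD^{C'}(H,K)$ by transporting the comodule structure along $f$, and analogously for bicomodule algebra isomorphisms. Applied to the two isomorphisms above with $\gamma=\alpha^{-1}$ this yields the two nontrivial equivalences in the chain: the iso $\alpha^{-1}:{}_\beta H_\alpha\to{}_{\alpha^{-1}\beta}H$ gives ${}_H\YD^{{}_\beta H_\alpha}(H,H)\cong{}_H\YD^{{}_{\alpha^{-1}\beta}H}(H,H)$, and the iso $\alpha^{-1}:H^{\beta\alpha^{-1}}={}^{\alpha\alpha^{-1}}H^{\beta\alpha^{-1}}\to{}^\alpha H^\beta$ gives ${}_{H^{\beta\alpha^{-1}}}\YD^H(H,H)\cong{}_{{}^\alpha H^\beta}\YD^H(H,H)$. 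The equalities between ${}_H\YD^H(\alpha,\beta)$ and its companion forms ${}_{{}^\alpha H^\beta}\YD^H(H,H)$ and ${}_H\YD^{{}_\beta H_\alpha}(H,H)$ are literal identifications of categories: as observed in the text preceding the lemma, the compatibility condition \eqref{eq:(alpha,beta)-YDcomp1} depends only on the products $\gamma\beta$ and $\delta\alpha$ of the twists, so all three datums realise the same pair $(\alpha,\beta)$ and define the same YD category on the same underlying class of $H$-modules that are $H$-comodules.

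The main obstacle is organisational rather than substantive: one has to carefully track which automorphism sits on which side of each twisted datum and verify that the functors compose consistently along the chain. The underlying Hopf algebra manipulations are forced by the axioms of a Hopf automorphism, and the functoriality step is a routine transport of (co)module structures, so no genuinely hard conceptual point is expected.
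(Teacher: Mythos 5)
Your proposal is correct and matches the paper's (essentially omitted) argument: the paper simply asserts the lemma "immediately follows from the above observations and direct computation," and your write-up supplies exactly those computations — the intertwining checks for the twisted (co)actions, the transport of (co)module structures along the resulting isomorphisms, and the observation that the compatibility condition depends only on the products of the twists, which gives the literal equalities in the chain. No gaps; the direction bookkeeping (e.g.\ $\gamma=\alpha^{-1}$ in each of the two explicit isomorphisms) is handled correctly.
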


\begin{remark}
The usual Yetter-Drinfeld modules are recovered when $\alpha = \beta = \Id$, while the anti-Yetter-Drinfeld modules as introduced in \cite{hajac} are recovered in the case of $\alpha=\Id$ and $\beta=S^2$. More generally, the case $\beta = S^{2i}$ is the notion of higher anti YD modules as studied in \cite{hassanzadeh}, in which case also  the anti-Yetter-Drinfeld contramodule \cite{brzezinski} are recovered for $i=-1$ if $M$ is finite-dimentional. The case $\beta=\Id$ with non-identity $\alpha$ is studied in \cite{canaepeel}.\end{remark}

\section{Center of biactegories}\label{sec:center}

\subsection{Preliminaries on monoidal categories and their actegories}
Recall (see e.g.\ \cite{maclane}) that a {\em monoidal} category is a category $\CC$ equipped with bifunctor $\tens : \CC\times \CC \to \CC$, unit object $1$, natural isomorphism $a_{U,V,W}: U\tens (V\tens W)\to (U\tens V) \tens W$ called associator, and natural isomorphisms $l_V: 1\tens V \to V $ and $r_V: V\tens 1 \to V$, such that all constraints obey suitable compatibility conditions. It is well-known by Mac Lane's coherence theorem that any monoidal category is monoidally equivalent to a strict monoidal category.

A {\em pre-braiding} on a monoidal category $\Cc$ is a natural transformation $\beta_{V,W}:V\ot W\to W\ot V$, natural in $V,W\in \Cc$, satisfying two hexagon conditions expressing the compatibility with the associators. In case $\beta_{V,W}$ are invertible, we say that $\Cc$ is {\em braided}, and in case the inverse of $\beta_{V,W}$ is given by $\beta_{W,V}$, we say that $\Cc$ is {\em symmetric}.

Given two monoidal categories $(\CC, \tens, 1_{\CC}, a, l, r)$ and $(\Dd, \boxtens, 1_{\Dd}, a', l', r')$, a functor $E: \CC \to \Dd$ is called an \textit{op-monoidal} functor if $E$ is equipped with morphisms $\psi_{0}: E1_{\CC}\to 1_{\Dd}$ and $\psi_{V,W}: E(V\tens W)\to EV\boxtimes EW$ natural in $V, W \in \CC$, such that
\begin{eqnarray*}
&El_V = l'_{EV}\circ (\psi_0\boxtens \Id_{EV})\circ \psi_{1_{\CC},V}, \qquad Er_V = r'_{EV}\circ (\Id_{EV}\boxtens \psi_0)\circ \psi_{V, 1_{\CC}}\\
&a'_{EV,EW,EZ}\circ(\Id_{EV}\boxtimes \psi_{W,Z})\circ\psi_{V,W\tens Z} = (\psi_{V,W}\boxtens \Id_{EZ})\circ(\psi_{V\tens W, Z})\circ E(a_{V,W,Z})
\end{eqnarray*}
for all $V,W,Z$ in $\Cc$.

An op-monoidal functor $E$ is called \textit{strong} if $\psi_0$ and $\psi_{V,W}$ are invertible for all $V,W\in \CC$. Dually, a functor $F: \CC \to \Dd$ is called a \textit{monoidal} functor if $F$ is equipped with morphisms $\gamma_0,\gamma$ where $\gamma_0 : 1_{\Dd} \to F(1_{\CC})$ and $\gamma_{V,W}: F(V)\boxtens F(W)\to F(V\tens W)$ natural in $V,W\in \CC$ such that they satisfy the arrow-reversing diagram of the condition for op-monoidal functor. Therefore, if an op-monoidal functor $(E,\psi_0, \psi)$ is strong, then it is also (strong) monoidal with $\gamma_0 = \psi^{-1}_0$ and $\gamma_{V,W} = \psi^{-1}_{V,W}$.

If, moreover, $(\CC, \beta)$ and $(\Dd, \beta')$ are braided monoidal categories, then an op-monoidal functor $E:\CC\to \Dd$ is called a \textit{braided op-monoidal} if $E$ satisfies the following additional condition for all $V,W\in \CC$
\[\psi_{W,V}\circ E(\beta_{V,W}) = \beta'_{E(W),E(V)}\circ \psi_{V,W}.\] Similarly, one can define a \textit{braided monoidal functor} $F:\CC\to \Dd$ by reversing the arrows in the diagram corresponding to the condition above.

For a given monoidal category $\CC$, we use the term \textit{left $\CC$-actegory} (sometimes called a left $\CC$-module category) for a category $\CM$ endowed with a bifunctor $\la : \CC \times \CM \to \CM$, and with isomorphisms
\[\lambda_M : 1_{\CC}\la M \to M, \qquad m_{V,W,M}: (V\tens W)\la M \to V\la (W\la M)\]
natural in $V,W\in \CC$ and $M\in \CM$, such that they satisfy suitable compatibility conditions with the monoidal structure on $\CC$. Similarly, a \textit{right $\CC$-actegory} is a category $\CM$ endowed with a bifunctor $\ra: \CM \times \CC \to \CM$, and with suitably compatible isomorphisms
\[\tau_M: M \ra 1_{\CC}\to M, \qquad  n_{M,V,W}: M\ra (V\tens W) \to (M\ra V)\ra W\]
natural in $V,W\in \CC$ and $M\in \CM$.

Let $\CM$ be a left $\CC$-actegory and a right $\Dd$-actegory. Then $\CM$ is said to be a \textit{$(\CC,\Dd)$-biactegory} if $\CM$ is furthermore equipped with isomorphisms
\[\eta_{V,M,X} : (V\la M) \ra X \to V\la (M\ra X)\]
natural in $V\in \CC$, $M\in \CM$, and $X\in \Dd$ subject to coherence conditions with respect to the monoidal structures of $\CC$ and $\Dd$. There is a version of Mac Lane's coherence theorem for (bi)actegories, and hence we will assume our (bi)actegories to be strict. A monoidal category $\CC$ is naturally a biactegory over itself where all bifunctors are given by the monoidal product of $\Cc$. Another natural example is given by the category ${}_A\Mod_B$ of $(A,B)$-bimodules over algebras $A$ and $B$, which is a $({}_A\Mod_A, {}_B\Mod_B)$-biactegory. Finally, if $H$ and $K$ are bialgebras and $A$ is an $(H,K)$-bicomodule algebra, then ${_A\Mod}$ is an $(_H\Mod,{}_K\Mod)$-biactegory by means of the $k$-linear tensor product.

\subsection{Center of biactegory relative to $E$}\label{sec:Ecenter}

Recall that for any monoidal category $\Cc$, we can construct a pre-braided monoidal category $\Zz^w(\Cc)$, called the {\em weak} (or {\em lax}) {\em center} of $\Cc$, as the category whose objects are pairs $(V,\beta^V)$, where $V$ is an object in $\CC$, and $\beta^V =\{\beta^V_U: U\tens V \to V\tens U\}_{U\in \CC}$ are natural transformations (called half-braidings) satisfying suitable compatibility conditions and such that the obvious forgetful functor $\Zz^w(\Cc)\to \Cc$ is strict monoidal. We call the subcategory $\CZ(\CC)$ of $\CZ^w(\CC)$ consisting of the pairs $(V,\beta^V)$ for which $\beta^V$ are natural isomorphisms the 
(strong) center of $\CC$.

\begin{definition}\label{def:centerbiact}
By a {\em center datum} we mean a quadruple $(\Cc,\Dd,\Mm,E)$, where $\Cc, \Dd$ are monoidal categories, $\Mm$ is a $(\Cc,\Dd)$-biactegory and $E: \Cc \to \Dd$ be an op-monoidal functor with structure morphisms $(\psi, \psi_0)$. Let us fix such a datum now.

The {\em lax $E$-center} of $\Mm$, denoted by $\Zz^w_E(\Mm)$, is the category defined as follows:
\begin{itemize}
\item Objects are pairs $(M, \beta^M)$, where $M\in \Mm$, and $\beta^M$ is a natural transformation %(called a {\em half braiding}) 
\[\beta^M_V : V\la M \to M\ra E(V)\]
natural in $V\in \Cc$, such that the following diagram commutes for all $V,W \in \CC$ (heptagon condition):
\[\xymatrixcolsep{5pc}\xymatrix{
& V\la (W\la M) \ar[dr]^-{\Id_V \tens \beta^M_W} \ar[dl]_-\cong \\
(V\tens W)\la M  \ar[d]_-{\beta^M_{V\tens W}} && V\la (M\ra E(W)) \ar[dd]^-{\cong}\\
M\ra E(V\tens W) \ar[d]_-{\Id_M \tens \psi_{V,W}}  && \\
M\ra (E(V)\tens E(W)) \ar[dr]_-{\cong}
 && (V\la M) \ra E(W) \ar[dl]^-{\beta^M_V \tens \Id_{E(W)}}\\
&(M \ra E(V)) \ra E(W) &
}\]
which, if we disregard the associativity, can be written concretely as
\begin{equation}\label{eq:halfbraid}
(\Id_M \tens \psi_{V,W})\circ \beta^M_{V\tens W} = (\beta^M_V \tens \Id_{E(W)})\circ (\Id_V \tens \beta^M_W).
\end{equation}

\item A morphism $f: (M, \beta^M)\to (N, \beta^N)$ in $\Zz^w_E(\Mm)$ is a morphism $f: M\to N$ in $\Mm$ such that for all $V\in \CC$, the following diagram is commutative
\[\xymatrixcolsep{5pc}\xymatrix{
V\la M \ar[r]^-{\beta^M_V} \ar[d]_-{\Id_V \la f} & M\ra E(V) \ar[d]^-{f\ra \Id_{E(V)}}\\
V\la N \ar[r] \ar[r]^-{\beta^N_V} & N\ra E(V)
}.\] 
\end{itemize}

The strong $E$-center $\Zz_E(\Mm)$ is the full subcategory of the lax center consisting of those objects $(M, \beta^M)$ for which $\beta^M$ is invertible.
\end{definition}

\begin{remarks}\label{dualcenter}
\begin{enumerate}
\item In case $E$ is monoidal rather then op-monoidal, it is still possible to define a weak $E$-center of a $(\Cc,\Dd)$-biactegory, reversing the arrow induced by $\psi$ in the above heptagon condition.

\item
There are obvious other sided and dual notions of center data, switching to opposite categories and reversed monoidal structures.
More precisely, we call $(\Cc,\Dd,\Mm,E)$ a {\em dual center datum} if and only if $(\Dd^{op,rev},\Cc^{op,rev},\Mm^{op,rev},E)$ is a center datum, where ``op'' refers to the opposite category and ``rev'' to the reversed monoidal structure. Explicitly, this means that $\Mm$ is a $(\Cc,\Dd)$-biactegory and $E:\Dd\to\Cc$ is a monoidal functor. For such a datum we can again define the weak and strong center, in which case a half braid will be of the form
$$\beta^M_V:E(X)\la M\to M\ra X,$$
for $M\in \Mm$ and $X\in\Dd$.

If we only take the reversed monoidal structures and not opposite categories, one could obtain braidings of the form $\beta^M_V:M\ra X\to E(X)\la M$ and similarly only using opposite categories but no reversed monoidal structure braiding would be of the type $\beta^M_V:M\ra E(V)\to V\la M$. 
\item
Taking $\Cc=\Dd$ and $E$ to be an identity functor, we recover the center of ${}_{\Cc}\Mm_{\Cc}$ \cite{gelaki}. Additionally, if $\Mm = \Cc$, then $\Zz({}_{\Cc}\Cc_{\Cc}) = \Zz(\Cc)$ is the Drinfeld center for monoidal category \cite{ma:center} as recalled above. As a partial converse of the above, in case $E$ is strong op-monoidal, then $\Mm$ can be viewed as a $(\Cc,\Cc)$-biactegory, defining $M\ra V := M\ra E(V)$ for all $M\in \Mm$ and $V\in \Cc$. In this case our definition reduces to the center defined in \cite{gelaki}. 
\item 
Given a braided monoidal category $\Cc$ and left $\Cc$-actegory $\Mm$, the {\em reflexive center} of $\Mm$ with respect to $\Cc$ was introduced in \cite{Laugwitz}. Since $\Mm$ is a left $\Cc$-actegory, it is also a right $\Cc^{rev}$-actegory, where $\Cc^{rev}$ denotes the category $\Cc$ but with reversed tensor product as monoidal structure. Since $\Cc$ is braided, there is a monoidal functor $E:\Cc\to\Cc^{rev}$, which is just the identity functor on $\Cc$, but endowed with a monoidal structure
$$\beta_{V,V'}:V\ot V'\to V\ot^{rev} V'=V'\ot V$$
given by the braiding $\beta$ on $\Cc$. Then the $E$-relative center of the $(\Cc,\Cc^{rev})$-biactegory $\Mm$ coincides exactly with the reflexive center of $\Mm$.
\end{enumerate}
\end{remarks}

Let us now show that the biactegory structure of $\Mm$ lifts to its $E$-center. 

\begin{lemma}\label{biactlift}
Let $\Cc, \Dd$ be monoidal categories equipped with an op-monoidal functor $E:\CC\to \Dd$, and let $\Mm$ be a $(\Cc, \Dd)$-biactegory. Let $\CZ^w(\CC)$ and $\CZ^w(\Dd)$ be the weak center of $\CC$ and $\Dd$ respectively. 
Then the weak $E$-center $\Zz^w_E(\Mm)$ is a $(\Zz^w(\Cc), \Zz^w(\Dd))$-biactegory such that the following diagrams of functors commute
\[
\xymatrix{
\Zz^w(\Cc) \times \Zz^w_E(\Mm) \ar[d] \ar[rr]^{\la} && \Zz^w_E(\Mm) \ar[d]\\
\Cc \times \Mm \ar[rr]^{\la} && \Mm
} \qquad
\xymatrix{
\Zz^w_E(\Mm) \times \Zz^w(\Dd) \ar[rr]^{\ra} \ar[d] && \Zz^w_E(\Mm) \ar[d]\\
\Mm\times \Dd \ar[rr]^{\ra} && \Mm
}
\]
The vertical arrows in this diagram are the obvious forgetful functors. The commutativity of this diagram expresses that the biactegory structure on $\Zz^w_E(\Mm)$ is lifted from $\Mm$.

Explicitly, for all $(V, \beta^V) \in \CZ^w(\CC)$, $(X, \beta^X)\in \CZ^w(\Dd)$, and $(M, \beta^M)\in \CZ^w_E(\CM)$, the left and right action on $\CZ^w(\Mm)$ are given by
\[(V, \beta^V)\la (M, \beta^M)  = (V\la M, \beta^{V\la M}), \qquad (M, \beta^M)\ra (X, \beta^X) = (M \ra W, \beta^{M\ra X})\]
where
\[\beta^{V\la M}_W = (\Id_{EV} \la \beta^M_W)(\beta^V_W \la \Id_M), \qquad \beta^{M\ra X}_W = (\Id_M \ra \beta^X_{EW})(\beta^M_W \ra \Id_X)\]
are natural in $W\in \CC$. 

The same results hold if we replace weak by strong centers in the above.
\end{lemma}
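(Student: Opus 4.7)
The plan is to verify directly the three ingredients that together establish the claim: that the proposed half-braidings $\beta^{V\la M}$ and $\beta^{M\ra X}$ satisfy the heptagon \eqref{eq:halfbraid} (so that the formulas really land in $\Zz^w_E(\Mm)$), that the assignments are functorial in each variable, and that the biactegory structure of $\Mm$ lifts to the centers. By the coherence theorem for biactegories mentioned in the text I work in the strict setting so that all associators can be suppressed.

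First, for the composite $\beta^{V\la M}_W = (\Id_V \la \beta^M_W)\circ(\beta^V_W \la \Id_M) : W\la(V\la M) \to (V\la M)\ra EW$ (reading the ``$\Id_{EV}$'' in the statement as $\Id_V$, since $V$ is the object of $\Cc$ that remains after the left action), I would check \eqref{eq:halfbraid} by writing both sides for $W, W' \in \Cc$ and then, on one side, applying the heptagon for $\beta^V$ in $\Zz^w(\Cc)$, namely $\beta^V_{W\ot W'}=(\beta^V_W \ot \Id_{W'})(\Id_W \ot \beta^V_{W'})$, together with the heptagon for $\beta^M$. After pasting, the resulting composite reorganises into $(\beta^{V\la M}_W \ra \Id_{EW'})\circ(\Id_W \la \beta^{V\la M}_{W'})$ purely by functoriality of $\la$ and $\ra$ and by naturality of $\beta^V$. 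The argument for $\beta^{M\ra X}_W = (\Id_M \ra \beta^X_{EW})\circ(\beta^M_W \ra \Id_X)$ is entirely analogous, but uses two additional ingredients: the op-monoidal structure morphism $\psi_{W,W'}:E(W\ot W')\to EW\ot EW'$ is moved past $\beta^X$ by naturality of $\beta^X$, and then the heptagon for $\beta^X$ in $\Zz^w(\Dd)$ is applied at the pair $(EW, EW')$; the interchange between the $\Cc$- and $\Dd$-actions on $\Mm$ (encoded by $\eta$) is used implicitly through strictness.

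Next, I would check compatibility with morphisms. Given $f:(V,\beta^V)\to (V',\beta^{V'})$ in $\Zz^w(\Cc)$ and $g:(M,\beta^M)\to (M',\beta^{M'})$ in $\Zz^w_E(\Mm)$, the morphism $f\la g$ commutes with the new half-braidings by pasting the naturality squares for $\beta^V$ against $f$ and for $\beta^M$ against $g$; the right-hand case is symmetric. This gives bifunctoriality of the lifted $\la$ and $\ra$. The associators and unit constraints on $\Zz^w_E(\Mm)$ are taken to be those of $\Mm$ (which makes the square with the forgetful functor commute on the nose); the only thing to check is that each of them is a morphism in the $E$-center, which reduces to compatibility of the half-braidings with the associators and unitors of $\Cc, \Dd, \Mm$, and follows routinely from naturality and the heptagons already proven. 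The biactegory coherence axioms are inherited term-by-term from $\Mm$ via the faithful forgetful functor, and the two squares in the statement hold by construction. Finally, the strong variant is immediate: if $\beta^V, \beta^M, \beta^X$ are invertible, then $\beta^{V\la M}$ and $\beta^{M\ra X}$ are composites of invertible maps.

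The principal obstacle is not any single conceptual step but the careful bookkeeping of three different categories $\Cc, \Dd, \Mm$, two different actions $\la$ and $\ra$, and the op-monoidal structure morphism $\psi$, all interacting in one diagram. The most delicate point is the heptagon for $\beta^{M\ra X}$: here $\psi_{W,W'}$ is not assumed invertible, so one must insert it on the correct side and then invoke naturality of $\beta^X$ against it before applying the heptagon in $\Zz^w(\Dd)$. Once this is arranged correctly, all three verifications collapse to a single heptagon application plus functoriality and naturality, yielding precisely the stated formulas for the lifted half-braidings.
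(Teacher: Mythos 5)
Your proposal is correct and follows essentially the same route as the paper: both verify the heptagon for the lifted half-braidings by pasting the heptagon of $\beta^M$ with naturality of $\beta^X$ against $\psi_{W,W'}$ and the heptagon of $\beta^X$ at $(EW,EW')$ (and symmetrically for the left action), then observe that the coherence isomorphisms of $\Mm$ are morphisms in the $E$-center and that the strong case follows from invertibility of composites. You also correctly read the $\Id_{EV}$ in the stated formula as $\Id_V$, which is indeed how the composite must be parsed for the domains and codomains to match.
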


\begin{proof}
Let $(M,\beta^M) \in \CZ^w_E(\CM)$, and $(X,\beta^X) \in \CZ^w(\Dd)$. One can see that $(M\ra X, \beta^{M\ra X})$ is indeed in $\CZ^w_E(\CM)$ since $\beta^{M\ra X}$ obeys the half-braiding axiom which is shown by the commutativity of the following diagram for any $U,V\in \Cc$. Basing on coherence for biactegories, we will not write associativity constraints in the diagram and denote all of $\ra$, $\la$ and $\ot$ by $\cdot$.
\[\xymatrix{
U\cdot V\cdot M \cdot X \ar[rr]^-{\Id\cdot \beta^M_V \cdot \Id} \ar[d]_-{\beta^M_{U\cdot V}\cdot \Id} 
&& U\cdot M \cdot EV\cdot X \ar[rr]^-{\Id\cdot \beta^X_{EV}} \ar[d]^-{\beta^M_U\cdot \Id}
&& U\cdot M\cdot X \cdot EV  \ar[d]^-{\beta^M_U\cdot \Id}\\
M\cdot E(U\cdot V)\cdot X \ar[d]_-{\Id\cdot \beta^X_{E(U\cdot V)}} \ar[rr]^-{\Id \cdot \psi_{U,V}\cdot \Id} && 
M\cdot EU\cdot EV\cdot X \ar[rr]^-{\Id\cdot \beta^X_{EV}} \ar[drr]_-{\Id\cdot \beta^X_{EU\cdot EV}}
&& M\cdot EU \cdot X \cdot EV \ar[d]^-{\Id\cdot \beta^X_{EU}\cdot \Id} \\
M\cdot X\cdot E(U\cdot V) \ar[rrrr]_-{\Id\cdot \psi_{U,V}}&& && M\cdot X \cdot EU \cdot EV 
}
\]
In the above diagram, the commutativity of the upper left square is exactly the heptagon condition for the half-braid $\beta^M$. The upper right square follows from the functoriality of the monoidal action $\ra$ (represented by the central $\cdot$ in the diagram). Lower quadrangle commutes by the naturality of $\beta^X$, and the triangle commutes as this is the hexagon condition for the half-braid $\beta^X$.

Next, we check that $\CZ^w_E(\CM)$ is a right $\CZ^w(\Dd)$-actegory. Since $\CM$ is a right $\Dd$-actegory, we have a natural isomorphism $n_{M,X,Y}: M\ra (X\tens Y) \to (M \ra X) \ra Y$. Therefore, we need to check that $n_{M,V,W}$ is a morphism in $\CZ^w_E(\CM)$. Again, since it is sufficient to check this in the strict case, we may suppose that $n_{M,X,Y}$ is the identity morphism in $\CM$, and we just need to check that the two half-braids on this object coincide. This follows from the commutativity of the following diagram.
\[ \xymatrixcolsep{5pc}\xymatrix{
U \cdot M\cdot X \cdot Y \ar@{=}[r] \ar[d]_-{\beta^M_U} & U \cdot M\cdot X\cdot Y \ar[d]^-{\beta^M_U}\\
M \cdot EU \cdot X \cdot Y \ar@{=}[r] \ar[d]_-{\Id\cdot \beta^{X}_{EU}\cdot \Id}  & M \cdot EU \cdot X\cdot Y \ar[dd]^-{\beta^{X\cdot Y}_{E(U)}}\\
M\cdot X \cdot EU \cdot Y \ar[d]_-{\Id \cdot \beta^Y_{EU}} & \\
M \cdot X \cdot Y \cdot EU \ar@{=}[r] & M \cdot X\cdot Y \cdot EU.
}\]
Here the lower rectangle commutes by the definition of the monoidal structure on $\Cc^w(\Dd)$.
One can also check that $\tau_{M}: M\ra 1_\Dd \to M$ is a morphisms in $\CZ^w_E(\CM)$ using similar argument. 
We do not need to check the axioms for $\tau$ and $n$ as these already follow from the fact that they are the same coherence morphisms as those of the $\Dd$-actegory structure on $\Mm$.
Using similar argument, one can show that $\CZ^w(\Cc)$ acts on $\CZ^w_E(\CM)$, making $\CZ^w_E(\CM)$ a $(\CZ^w(\Cc), \CZ^w(\Dd))$-biactegory.
\end{proof}

\subsection{$E$-braided biactegories}

Given a monoidal category $\CC$, then by the construction, its (lax) center $\CZ^{(w)}(\CC)$ is a (pre-)braided category. Our next aim is to show a similar result for the $E$-center of biactegory. To this end we now define the notion of $E$-braiding for a biactegory.

\begin{definition}\label{def:E-prebraided}
Let $\CC$ and $\Dd$ be prebraided categories (denoting -by abuse of notation- both braidings by $\beta$), and let $E: \CC \to \Dd$ be op-monoidal. A $(\Cc,\Dd)$-biactegory $\CM$ is called \textit{$E$-prebraided} if there exists morphisms
\[\beta_{V,M}: V\la M \to M\ra E(V),\]
natural in $V\in \Cc$ and $M\in \Mm$,
such that for all $M\in \CM$, $V,W\in \Cc$, and $X\in \Dd$, the following diagrams are commutative
\begin{equation}\label{prebraidedact1}
{\xymatrix{
{} & V\la (W\la M)\ar[ld]_-{\Id_V \la \beta_{W,M}} \ar[rd]^-{\cong} &{}\\
V\la(M\ra E(W)) \ar[dd]_-{\cong} & {} & (V\tens W)\la M \ar[d]^-{\beta_{V\tens W,M}}\\ 
&& M \ra E(V\tens W) \ar[d]^-{\Id_M \tens \psi_{V,W}}\\
(V\la M)\ra E(W)\ar[rd]_-{\beta_{V,M} \ra \Id_{E(W)}} & {} & M\la (E(V)\tens E(W))\ar[ld]^-{\cong}\\
{} & (M\ra E(V))\ra E(W) &{}
}}
\end{equation}
\begin{equation}\label{prebraidedact2}
\hspace{-.5cm}\resizebox{8.7cm}{!}{
{\xymatrix{
{} & (V\tens W) \la M \ar[ld]_-{\beta_{V,W} \la \Id_M} \ar[rd]^-{\cong} & {}\\
(W\tens V)\la M \ar[d]_-{\cong} & {} & V\la (W\la M) \ar[d]^-{\beta_{V,W\la M}}\\
W\la (V\la M) \ar[rd]_-{\Id_W \la \beta_{V,M}} & {} & (W\la M)\ra E(V) \ar[ld]^-{\cong}\\
{} & W\la (M\ra E(V)) & {}
}}}
%\end{equation}
\quad
\resizebox{8.7cm}{!}{
{\xymatrix{
{}& (V\la M) \ra X\ar[ld]_-{\beta_{V,M} \ra \Id_X} \ar[rd]^-{\cong} &{}\\
(M\ra E(V)) \ra X \ar[d]_-{\cong} & {} & V\la (M\ra X)\ar[d]^-{\beta_{V,M\ra X}}\\
M\ra(E(V)\tens X) \ar[rd]_-{\Id_M\ra \beta_{E(V), X}} & {} & (M\ra X)\ra E(V) \ar[ld]^-{\cong}\\
{} & M \ra (X\tens E(V)) &{} 
}}}
\end{equation}
where $\beta_{E(V),X}$ is a braiding in $\Dd$ and $\beta_{V,W}$ is a braiding in $\Cc$. Moreover, $\CM$ is called an \textit{$E$-braided category} if $\beta_{V,M}$ is a natural isomorphism for all $V\in \CC$ and $M\in \CM$.
\end{definition}

In case $\CM=\CC=\Dd$ and $E$ is an identity functor, we see that the two diagrams from \eqref{prebraidedact2} coincide and we recover the usual hexagon conditions for a braided monoidal category. 
In the next proposition, we see that the weak $E$-center of $(\Cc,\Dd)$-biactegory $\CM$ forms an $E$-prebraided category.

\begin{theorem}\label{thm:centerbiact} 
Let $\Cc, \Dd$ be monoidal categories such that there is commutative diagram of functors  
\[
\xymatrix{
\Zz^w(\Cc) \ar[d] \ar[rr]^E && \Zz^w(\Dd) \ar[d]\\
\Cc \ar[rr]^E && \Dd
}
\]
where the vertical arrows are the forgetful functors, the upper horizontal functor is braided op-monoidal and the lower horizontal functor is op-monoidal. The commutativity of diagram expresses that the lower op-monoidal functor $E$ lifts to a braided op-monoidal functor between the weak centers, by abuse of notation we denote both horizontal op-monoidal functors by $E$.

Furthermore, let $\Mm$ be a $(\Cc, \Dd)$-biactegory and consider $\Zz^w_E(\Cc)$ as $(\Zz^w(\Cc),\Zz^w(\Dd))$-biactegory as in Lemma~\ref{biactlift}. Then $\Zz^w_E(\Cc)$ is $E$-prebraided, with braiding
\[\beta_{V,M}:= \beta^M_V : (V\la M,\beta^{V\la M}) \to (M\ra EV,\beta^{M\ra EV})\]
for all $(V,\beta^V) \in \CZ^w(\CC)$ and $(M,\beta^M)\in \CZ^w_E(\CM)$.

Similarly, the strong $E$-center $\Zz_E(\Cc)$ is an $E$-braided $(\Zz(\Cc),\Zz(\Dd))$-biactegory.
\end{theorem}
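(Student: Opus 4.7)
The plan is to verify the four ingredients of Definition~\ref{def:E-prebraided}: (i) that $\beta_{V,M}:=\beta^M_V$ is a morphism in $\Zz^w_E(\Mm)$ from $(V\la M,\beta^{V\la M})$ to $(M\ra EV,\beta^{M\ra EV})$, the target objects being produced by the biaction of Lemma~\ref{biactlift}; (ii) naturality of $\beta_{V,M}$ in $V\in\Zz^w(\Cc)$ and $M\in\Zz^w_E(\Mm)$; (iii) the three coherence diagrams of Definition~\ref{def:E-prebraided}; and (iv) invertibility in the strong case. Everything will reduce to the heptagon axiom~\eqref{eq:halfbraid} of the half-braiding $\beta^M$, the explicit formulas from Lemma~\ref{biactlift} for the lifted half-braidings on $V\la M$ and $M\ra X$, and the braided op-monoidal hypothesis on the lift of $E$.

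Step~(i) is the core of the argument. Expanding the two sides of the morphism condition via Lemma~\ref{biactlift} yields
\[(\beta^M_V\ra\Id_{EW})\circ(\Id_V\la\beta^M_W)\circ(\beta^V_W\la\Id_M)\quad\text{versus}\quad (\Id_M\ra\beta^{EV}_{EW})\circ(\beta^M_W\ra\Id_{EV})\circ(\Id_W\la\beta^M_V).\]
Applying \eqref{eq:halfbraid} to $(V,W)$ on the left and to $(W,V)$ on the right transforms both composites into precompositions of $\Id_M\ra(-)$ with $\beta^M_{V\ot W}$ and $\beta^M_{W\ot V}$ respectively; then naturality of $\beta^M$ applied to the morphism $\beta^V_W\colon W\ot V\to V\ot W$ in $\Cc$ reduces the desired equality to
\[\psi_{V,W}\circ E(\beta^V_W)=\beta^{EV}_{EW}\circ\psi_{W,V},\]
which is precisely the braided op-monoidal compatibility of the lift $E\colon\Zz^w(\Cc)\to\Zz^w(\Dd)$ assumed in the theorem. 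Keeping the associators, the various $\beta$-indices, and the direction conventions straight is the main obstacle; once past this point, the remaining steps unfold almost tautologically.

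For step~(ii), naturality in $V$ is inherited from naturality of $\beta^M$ in $\Cc$, since every morphism in $\Zz^w(\Cc)$ is in particular a morphism in $\Cc$, and naturality in $M$ is exactly the defining morphism condition of $\Zz^w_E(\Mm)$. Step~(iii) then goes as follows: diagram~\eqref{prebraidedact1} is literally the heptagon axiom \eqref{eq:halfbraid} for $\beta^M$ at $V\ot W$; substituting $\beta^{W\la M}_V=(\Id_W\la\beta^M_V)\circ(\beta^W_V\la\Id_M)$ from Lemma~\ref{biactlift} into the first hexagon of~\eqref{prebraidedact2} shows that both paths compute $\beta^{W\la M}_V$; and substituting $\beta^{M\ra X}_V=(\Id_M\ra\beta^X_{EV})\circ(\beta^M_V\ra\Id_X)$ similarly identifies both paths of the second hexagon with $\beta^{M\ra X}_V$. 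Finally, step~(iv) is immediate: for $(M,\beta^M)\in\Zz_E(\Mm)$ each $\beta^M_V$ is by definition invertible, hence so is $\beta_{V,M}$; moreover the formulas of Lemma~\ref{biactlift} make the biaction of $\Zz(\Cc)$ and $\Zz(\Dd)$ preserve invertibility of half-braidings, so $\Zz_E(\Mm)$ is a genuine $E$-braided $(\Zz(\Cc),\Zz(\Dd))$-biactegory.
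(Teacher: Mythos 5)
Your proposal is correct and follows essentially the same route as the paper: the core step is showing that $\beta^M_V$ is a morphism in $\Zz^w_E(\Mm)$ by combining the heptagon condition for $(V,W)$ and $(W,V)$, naturality of $\beta^M$ at $\beta^V_W$, and the braided op-monoidality of the lifted $E$, which is exactly the decomposition of the paper's commutative diagram. Your explicit verification of the three coherence diagrams via the formulas of Lemma~\ref{biactlift} fills in details the paper leaves to the reader, but it is the same argument.
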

\begin{proof}
Let us check the $E$-prebraiding $\beta_{V,M}$ as stated above is a morphism in $\CZ^w_E(\CM)$. This follows by commutativity of the following diagram, where we again work in the strict monoidal case and denote all of $\la$, $\ra$ and $\ot$ by $\cdot$.
\[\xymatrix{
W\cdot V\cdot M \ar[r]^-{\beta_{V,W}\cdot \Id} \ar[dd]_{\Id \cdot \beta_{V,M}} \ar[rd]_-{\beta_{W\cdot V,M}} & V\cdot W\cdot M \ar[rr]^-{\Id\cdot \beta_{W,M}} \ar[rd]^-{\beta_{V\cdot W,M}} && V \cdot M \cdot EW \ar[dd]^-{\beta_{V,M}\cdot\Id}\\
& M\cdot E(W\cdot V) \ar[dr]_-{\Id\cdot \psi_{W,V}} \ar[r]^-{\Id\cdot E\beta_{W,V}} & M\cdot E(V\cdot W)\ar[dr]^-{\Id\cdot \psi_{V,W}} & \\
W \cdot M\cdot EV \ar[rr]_-{\beta_{M,V}\cdot \Id}&& M \cdot EW \cdot EV\ar[r]_-{\Id\cdot\beta_{EW,EV}} & M \cdot EV \cdot EW.
}\]
The left lower and upper right triangles are commutative due to the half-braid condition of $\beta^M$. The commutativity of the upper parallelogram is due to the naturality of $\beta^M$, the  lower parallelogram follows from the fact that $E$ is braided op-monoidal.
Since $\beta^V_M$ is a morphism in $\CZ^w_E(\CM)$, one can show that the three diagrams in Definition \ref{def:E-prebraided} are satisfied using that $\CZ^w(\CC)$ and $\CZ^w(\Dd)$ are pre-braided categories. 
\end{proof}

We finish this section with an obvious sufficient condition for a lifting as needed in Theorem~\ref{thm:centerbiact}. In fact, we formulate this lifting in a slightly generalized setting.

\begin{lemma}\label{liftingtocenter}
Let $E:\Cc\to\Dd$ be a monoidal equivalence, whose quasi-inverse we denote by $\ol E$ and $F:\Cc\to\Cc$ an op-monoidal functor. Then $E$ lifts to an equivalence between relative centers as in the diagram below, where the vertical arrows indicate the obvious forgetful functors
\[
\xymatrix{
\Zz^w_F(\Cc) \ar[d] \ar[rr]^-E && \Zz^w_{EF\ol E}(\Dd) \ar[d]\\
\Cc \ar[rr]^-E && \Dd
}
\]
\end{lemma}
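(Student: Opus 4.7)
I would define the lifted functor $\tilde E:\Zz^w_F(\Cc)\to \Zz^w_{EF\ol E}(\Dd)$ on objects and morphisms using the strong monoidal structure $\phi_{V,W}:EV\otimes EW\xrightarrow{\cong} E(V\otimes W)$ of $E$ together with the counit $\epsilon:E\ol E\Rightarrow \Id_{\Dd}$ of the adjoint equivalence, then verify the heptagon condition for the lifted half-braiding, and finally construct a quasi-inverse by applying the analogous recipe to $\ol E$.

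\emph{Construction of $\tilde E$.} On objects I set $\tilde E(M,\beta^M):=(EM,\gamma^{EM})$, where for each $X\in\Dd$ the half-braiding is the composite
\begin{equation*}
\gamma^{EM}_X \;=\; \phi^{-1}_{M,F\ol E X}\circ E(\beta^M_{\ol E X})\circ \phi_{\ol E X,M}\circ(\epsilon_X^{-1}\otimes \Id_{EM}) \;:\; X\otimes EM\longrightarrow EM\otimes EF\ol E X.
\end{equation*}
On morphisms I set $\tilde E(f):=Ef$; compatibility with the lifted half-braidings is immediate from the naturality of $\phi$, $\epsilon$ and $\beta$, together with the functoriality of $E$. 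The square with the forgetful functors commutes by construction, since on underlying objects $\tilde E$ is $E$.

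\emph{Heptagon verification.} The op-monoidal structure on $EF\ol E$ is the paste of the strong op-monoidal structures on $E$ and $\ol E$ with the op-monoidal structure on $F$. Checking \eqref{eq:halfbraid} for $\gamma^{EM}$ then reduces to three ingredients: (i) the heptagon for $\beta^M$ specialized at $V=\ol E X$, $W=\ol E Y$; (ii) the naturality of $\phi$ and $\epsilon$; and (iii) the fact that $\epsilon$ is a \emph{monoidal} natural isomorphism, which controls its interaction with $\phi$. This diagram chase is the principal technical step, but it is entirely routine once the composite op-monoidal structure on $EF\ol E$ is unpacked.

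\emph{Equivalence.} Since $\ol E$ is itself a monoidal equivalence, the same recipe produces a functor $\widetilde{\ol E}:\Zz^w_{EF\ol E}(\Dd)\to\Zz^w_{\ol E E F\ol E E}(\Cc)$. The unit $\eta:\Id_{\Cc}\Rightarrow \ol E E$ is a monoidal natural isomorphism, hence induces an isomorphism of op-monoidal endofunctors $F\cong \ol E E F\ol E E$ and correspondingly an isomorphism of centers $\Zz^w_F(\Cc)\cong \Zz^w_{\ol E E F\ol E E}(\Cc)$. Transporting along $\eta$ and $\epsilon$ then yields natural isomorphisms $\widetilde{\ol E}\circ\tilde E\cong\Id$ and $\tilde E\circ\widetilde{\ol E}\cong\Id$ at the level of centers. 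The only genuine obstacle is the heptagon verification; conceptually, once that is settled, the equivalence is forced by the fact that monoidal equivalences transport all coherent structure between $\Cc$ and $\Dd$.
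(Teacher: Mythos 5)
Your proposal is correct and follows essentially the same route as the paper: the paper also defines the lifted half-braiding as the composite $Z\ot EX \cong E\ol EZ \ot EX \cong E(\ol EZ\ot X) \xrightarrow{E\beta^X_{\ol EZ}} E(X\ot F\ol EZ) \cong EX\ot EF\ol EZ$, i.e.\ transfer of structure along the monoidal equivalence, and leaves the heptagon check and the construction of the quasi-inverse to the reader. Your additional details (the role of the monoidality of $\epsilon$ in the heptagon verification, and the identification $\Zz^w_F(\Cc)\cong\Zz^w_{\ol EEF\ol EE}(\Cc)$ via the unit) correctly fill in exactly what the paper omits.
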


\begin{proof}
Since $E$ is a monoidal equivalence, the lifting of the functor $E$ is just an instance of {\em transfer of structure} along this equivalence. Explicitly, let $(X,\beta^X)$ be an element of $\Zz^w_F(\Cc)$, where $\beta^X_Y:Y\ot X\to X\ot FY$ is the half-braid. Then we define a half braid $\beta^{EX}_Z: Z\ot EX\to EX\ot EF\ol EZ$ for any $Z\in \Dd$ by means of the following composition.
\begin{eqnarray*}
Z\ot EX &\cong & E\ol EZ \ot EX \cong E(\ol EZ\ot X)\\
&\stackrel{E\beta^X_{\ol EZ}}{\longrightarrow}& 
E(X\ot F\ol EZ) \cong EX\ot EF\ol EZ
\end{eqnarray*}
The above isomorphisms are obtained from the monoidal equivalence $E$. We leave it to the reader to check that $\beta^{EX}$ is indeed a half-braid, that this construction is functorial and leads to an equivalence of categories.
\end{proof}

\section{The relative center of a category of modules}\label{sec:centerYD}

In this section we will apply the general categorical notions and results from the previous section to the case where we consider module categories\footnote{By ``module categories'' we mean ``categories of modules''. Recall that we use the term {\em actegory} for a category endowed with an action of a monoidal category, which is also sometimes called ``module category'' in literature.} over $k$-algebras, and where all monoidal structures are induced by the $k$-linear tensor product. 

Recall that the category ${}_H\Mod$ of left modules over a $k$-algebra $H$ is monoidal with respect to the $k$-linear tensor product if and only if $H$ has a bialgebra structure.
In this case, it is well-known (see, e.g.,\ \cite{ma:book}) that the weak center $\CZ^w({}_H\Mod)$ is braided monoidally isomorphic to the category of left-right Yetter-Drinfeld modules ${}_H\YD^H$. 
If moreover $H$ is a Hopf algebra, then any object from the weak center is also in the strong center, so that we find $\CZ^w({}_H\Mod)\cong \CZ({}_H\Mod) \cong {}_H\YD^H$ in this case. Under the condition that the base ring $k$ is a field, by a dual reasoning, we can see that also $\CZ^w(\Mod^H)$ is braided monoidally isomorphic to ${}_H\YD^H$ for any bialgebra $H$, and further more coincides with $\CZ(\Mod^H)$ if $H$ is Hopf. 

\subsection{The category of generalized Yetter-Drinfeld modules as relative center}

Let us start by describing a center datum in the setting of categories of modules. In view of what we recalled above, considering two bialgebras $H$ and $K$ is equivalent to considering two monoidal categories $\Cc={}_H\Mod$ and $\Dd={}_K\Mod$, whose forgetful functors to $k$-modules are strict monoidal. Furthermore, for another $k$-algebra $A$, we find that $A$ has an $(H,K)$-bicomodule algebra structure if and only if $\Mm={_A\Mod}$ is an $({}_H\Mod,{}_K\Mod)$-biactegory where the monoidal actions $\la, \ra$ are given by the $k$-linear tensor product. Under this correspondence, for $M\in {}_A\Mod$, $V\in {}_H\Mod$ and $W\in {}_K\Mod$, the $A$-action  on $V\ot M\ot W$ is given by the formula
\[a\cdot(v\ot m\ot w)=a_{[-1]}\la v\ot a_{[0]}m\ot a_{[1]}\ra w,\]
for all $m\in M, v\in V, w\in W$, and $a\in A$.

By the classical Eilenberg-Watts theorem, any functor co-continuous functor $E:\Cc\to\Dd$ is of the form $E\simeq C\ot_H-:{_H\Mod}\to {_K\Mod}$ for some $(K,H)$-bimodule $C$. Furthermore, we find (for details, see e.g.\ \cite{AV})
\begin{lemma}
For a co-continuous functor $E\simeq C\ot_H-:{_H\Mod}\to {_K\Mod}$, there is a bijective correspondence between:  
\begin{enumerate}[(1)]
\item op-monoidal structures on $E$;
\item monoidal structures on the right adjoint of $E$;
\item coalgebra structures on $C$ turning it into an $(K,H)$-bimodule coalgebra.
\end{enumerate}
\end{lemma}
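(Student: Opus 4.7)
The plan is to separate the lemma into two independent equivalences: $(1)\Leftrightarrow(2)$ and $(1)\Leftrightarrow(3)$.

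For $(1)\Leftrightarrow(2)$, I would invoke Kelly's classical doctrinal adjunction theorem, which produces a canonical bijection between op-monoidal structures on the left adjoint and monoidal structures on the right adjoint of any adjunction between monoidal categories, via the mate correspondence. Applied to the Eilenberg--Watts adjunction $E=C\ot_H-\dashv \Hom_K(C,-)$, this yields the desired equivalence with no further computation.

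For $(1)\Leftrightarrow(3)$, I plan to exhibit explicit mutually inverse assignments. From a $(K,H)$-bimodule coalgebra structure $(\Delta_C,\epsilon_C)$ on $C$, with Sweedler notation $\Delta_C(c)=c\o\ot c\t$, I would set
\[\psi_0(c\ot 1)=\epsilon_C(c),\qquad \psi_{V,W}(c\ot(v\ot w))=(c\o\ot v)\ot(c\t\ot w).\]
Well-definedness over the balanced tensor $\ot_H$ is exactly the right $H$-linearity of $\Delta_C$ and $\epsilon_C$; $K$-linearity of $\psi_{V,W}$ and $\psi_0$ with codomain in ${_K\Mod}$ is the left $K$-linearity; and the heptagon and unit axioms for $(E,\psi,\psi_0)$ translate term-by-term into coassociativity and counitality of $(\Delta_C,\epsilon_C)$. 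Conversely, from $(\psi,\psi_0)$ I would recover $\Delta_C$ by evaluating $\psi_{H,H}$ on the element $c\ot(1\ot 1)$ under the natural isomorphism $E(H)\cong C$, obtaining $(c\o\ot 1)\ot(c\t\ot 1)\cong c\o\ot c\t\in C\ot C$; the map $\epsilon_C$ comes analogously from $\psi_0$ via $E(k)$.

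The main technical obstacle is to verify that these two assignments are mutually inverse and that all axioms transfer. This is essentially a Yoneda-type argument exploiting cocontinuity of $E$: since every $H$-module is a colimit of copies of the regular module $H$, any natural transformation $\psi_{V,W}$ between cocontinuous bifunctors is already determined by its value at $V=W=H$, and likewise the op-monoidal axioms reduce to the corresponding coalgebra axioms tested on this single generator. The bimodule compatibilities of $(\Delta_C,\epsilon_C)$ are then forced by the corresponding naturality and $K$-linearity requirements on $(\psi,\psi_0)$; for instance, right $H$-linearity of $\Delta_C$ is obtained by applying $\psi_{H,H}$ to the identity $ch\ot(1\ot 1)=c\ot(h\o\ot h\t)$ holding in $C\ot_H(H\ot H)$.
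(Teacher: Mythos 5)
Your proposal is correct. Note, however, that the paper does not actually prove this lemma: it defers the details to the reference \cite{AV} (listed as ``in preparation'') and merely records the resulting formulas \equref{Eopmonoidal} for $\psi_{V,W}$ and $\psi_0$ --- which coincide exactly with the formulas you construct. So your argument supplies a proof the paper omits rather than reproducing one. Your two ingredients are the standard ones: Kelly's doctrinal adjunction gives $(1)\Leftrightarrow(2)$ via the mate correspondence for the adjunction $C\ot_H-\dashv \Hom_K(C,-)$, and for $(1)\Leftrightarrow(3)$ the reduction of a natural transformation between cocontinuous functors to its component at the regular module $H$ (in fact plain naturality against the $H$-linear maps $h\mapsto hv$ already suffices, so cocontinuity is not strictly needed for this step) correctly identifies well-definedness over $\ot_H$ with right $H$-linearity of $\Delta_C$, left $K$-linearity of $\psi$ with the left $K$-module coalgebra condition, and the heptagon/counit axioms with coassociativity/counitality. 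The only cosmetic issue is notational: you should write $c\o\ot_H v\ot c\t\ot_H w$ (balanced tensors inside each factor) and $\psi_0(c\ot_H\lambda)=\epsilon_C(c)\lambda$, as in \equref{Eopmonoidal}.
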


Under the above correspondence, the op-monoidal structure on the functor $C\ot_H-$ is given by
\begin{eqnarray}
&&\psi_{V,W}:C\ot_H(V\ot W)\to C\ot_H V\ot C\ot_H W,\nonumber 
\\&&\hspace{3cm}\psi_{V,W}(c\ot_H (v\ot w))=c_{(1)}\ot_H v\ot c_{(2)}\ot_H w; \label{Eopmonoidal}\\
&&\psi_0:C\ot_H k\to k,\ \psi_0(c\ot_H \lambda)=\epsilon(c)\lambda
\end{eqnarray}
for all $v\in V$, $w\in W$, $c\in C$, $\lambda\in k$, where $V$ and $W$ are arbitrary left $H$-modules.

The above observations are summarized in the following proposition.

\begin{proposition}\label{correspondencedata}
There is a bijective correspondence between:
\begin{enumerate}[(1)]
\item Center data $(\Cc,\Dd,\Mm,E)$, where $\Cc$, $\Dd$ and $\Mm$ are module categories over $k$-algebras and the forgetful functors send the $(\Cc,\Dd)$-biactegory $\Mm$ to the $(\Mod_k,\Mod_k)$-biactegory $\Mod_k$, and where $E$ is co-continuous;
\item Yetter-Drinfeld data $(H,K,A,C)$.
\end{enumerate}
\end{proposition}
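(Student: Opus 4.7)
The plan is to assemble the correspondence one component at a time, using reconstruction-style arguments, and to verify that each axiom on one side matches exactly one axiom on the other.

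First I would extract the bialgebras. Since $\Cc$ is a $k$-linear monoidal category whose forgetful functor to $\Mod_k$ is strict monoidal, standard Tannakian reconstruction gives $\Cc\simeq {}_H\Mod$ for a unique bialgebra $H$; similarly $\Dd\simeq {}_K\Mod$. Because $\Mm$ is a $k$-linear category whose forgetful functor to $\Mod_k$ is strict, one has $\Mm\simeq {}_A\Mod$ for a unique $k$-algebra $A$. For the functor, the Eilenberg-Watts theorem writes any co-continuous $E:\Cc\to\Dd$ in the form $C\ot_H-$ for a $(K,H)$-bimodule $C$, and the lemma immediately preceding the proposition says that op-monoidal structures on $E$ correspond bijectively to $(K,H)$-bimodule coalgebra structures on $C$.

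The heart of the argument is to match the $({}_H\Mod,{}_K\Mod)$-biactegory structure on ${}_A\Mod$ (required to lift $\ot_k$) with the $(H,K)$-bicomodule algebra structure on $A$. Concretely, a left actegory structure whose underlying functor is $V\ot_k-$ equips the $k$-module $V\ot_k M$ with an $A$-action, natural in $V$ and $M$; specialising to $M=A$ and $V=H$ and invoking naturality and $A$-linearity extracts a $k$-linear map $\Delta_L:A\to H\ot A$, which the actegory unitor and associator force to be a coassociative, counital algebra map, that is, a left $H$-comodule algebra structure. Symmetrically, the right actegory structure produces a right $K$-comodule algebra structure $\Delta_R$, and the middle isomorphism $\eta$ of the biactegory translates into the commutation between $\Delta_L$ and $\Delta_R$ that makes $A$ into an $(H,K)$-bicomodule algebra.

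Conversely, given a YD datum $(H,K,A,C)$, the formulas already recalled in the section (the $A$-action $a\cdot(v\ot m\ot w)=a_{[-1]}\la v\ot a_{[0]}m\ot a_{[1]}\ra w$ and the op-monoidal structure \eqref{Eopmonoidal} on $E=C\ot_H-$) define a center datum of the required type, and the two constructions are mutually inverse up to the standard identifications by which an algebra is recovered from its category of modules. The main obstacle I anticipate is the biactegory-to-bicomodule-algebra translation: each piece of coherence data (left and right unitors, associators, and most delicately the middle isomorphism $\eta$) must be matched to exactly one bicomodule-algebra axiom, with no extra data left on either side. Each individual check is a routine naturality computation, but the bookkeeping to verify that nothing is duplicated or missed requires some care; everything else reduces to the cited Eilenberg-Watts lemma and classical Tannakian reconstruction.
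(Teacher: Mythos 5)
Your proposal is correct and follows essentially the same route as the paper, which simply records this proposition as a summary of the preceding observations (bialgebra structures on $H$ versus monoidal structures on ${}_H\Mod$, bicomodule algebra structures on $A$ versus biactegory structures on ${}_A\Mod$ lifting $\ot_k$, Eilenberg--Watts, and the stated lemma on op-monoidal structures). You merely spell out in more detail the extraction of $\Delta_L$, $\Delta_R$ and the matching of the coherence data, which the paper leaves implicit.
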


We are now ready to state our main result of this section.

\begin{theorem}\label{thm:mainthm}
Let $(H,K,A,C)$ be a Yetter-Drinfeld datum and $({}_H\Mod,{}_K\Mod,{}_A\Mod,E\simeq C\ot_H-)$ the corresponding center datum via Proposition~\ref{correspondencedata}. Then the lax $E$-center of the $({}_H\Mod, {}_K\Mod)$-biactegory ${}_A\Mod$ of left $A$-modules is isomorphic to the category of generalized YD modules:
$$\CZ^w_{C\ot_H-}({}_A\Mod)\cong {}_A\YD^C(H,K).$$
\end{theorem}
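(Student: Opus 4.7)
My plan is to construct mutually inverse functors $F : {}_A\YD^C(H,K) \to \CZ^w_E({}_A\Mod)$ and $G : \CZ^w_E({}_A\Mod) \to {}_A\YD^C(H,K)$, with $E = C\tens_H -$. The correspondence on objects is encoded by the single formula
\[\beta^M_V(v \tens m) = m_{[0]} \tens (m_{[1]} \tens_H v),\]
for $V \in {}_H\Mod$, $v \in V$, $m \in M$, which pairs a right $C$-coaction $\rho^r(m) = m_{[0]} \tens m_{[1]}$ on $M$ with a half-braiding $\beta^M$.

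For $F$, given $(M, \rho^r)$, I define $\beta^M_V$ by the formula above. Naturality in $V$ is immediate. The heptagon condition, once one substitutes the explicit $\psi_{V, W}(c \tens_H (v \tens w)) = c\o \tens_H v \tens c\t \tens_H w$ from \eqref{Eopmonoidal}, reduces essentially word-for-word to the coassociativity of $\rho^r$. For $A$-linearity, the $A$-actions are $a \cdot (v \tens m) = a_{[-1]} v \tens a_{[0]} m$ on $V \tens M$ and $a \cdot (m \tens (c \tens_H v)) = a_{[0]} m \tens (a_{[1]} \la c \tens_H v)$ on $M \tens (C \tens_H V)$; using the balancing $c \tens_H a_{[-1]} v = (c \ra a_{[-1]}) \tens_H v$ in $C \tens_H V$, the $A$-linearity of $\beta^M_V$ is exactly the compatibility \eqref{eq:genYDcomp1}.

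For $G$, given $(M, \beta^M)$, let $\mu : C \tens_H H \to C$, $c \tens_H h \mapsto c \ra h$, be the canonical isomorphism and set $\rho^r(m) := (\Id_M \tens \mu)(\beta^M_H(1_H \tens m))$. The structural point is that the naturality of $\beta^M$ along the $H$-linear maps $f_v : H \to V$, $h \mapsto hv$, forces the formula $\beta^M_V(v \tens m) = m_{[0]} \tens (m_{[1]} \tens_H v)$, so $\beta^M$ is entirely determined by $\rho^r$. Coassociativity of $\rho^r$ then arises by taking $V = W = H$ in the heptagon and evaluating at $1 \tens 1 \tens m$; counitality follows from the heptagon with $V$ or $W$ equal to the trivial $H$-module $k$, combined with the unit coherence $\psi_0 : C \tens_H k \to k$, $c \tens_H 1 \mapsto \epsilon(c)$, of the op-monoidal structure. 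The compatibility \eqref{eq:genYDcomp1} is exactly the $A$-linearity of $\beta^M_H$ at $1_H \tens m$. Mutual invertibility of $F$ and $G$ is then immediate, and on morphisms an $A$-linear intertwiner of half-braidings is the same as an $A$-linear, right $C$-colinear map.

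The main obstacle is the concrete unpacking of the heptagon. Even after reducing by naturality to $V = W = H$, one must align the comultiplication of $C$ that appears inside $\psi_{H, H}$ with an iterated coaction on $M$, and deriving counitality from the heptagon requires invoking the unit coherence of $E$ rather than a separately imposed normalization of $\beta^M$. The $A$-linearity computation adds another layer of bookkeeping: one has to move scalars from $A$ across the tensor product over $H$, simultaneously exploiting the $(H,K)$-bicomodule algebra structure of $A$ and the $(K,H)$-bimodule coalgebra structure of $C$. Once this bookkeeping is in hand, all the remaining diagrams close up routinely.
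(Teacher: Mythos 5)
Your construction is essentially the paper's: the same formula $\beta^M_V(v\ot m)=m_{[0]}\ot(m_{[1]}\ot_H v)$ in one direction, reduction to the component $\beta^M_H$ by naturality in the other (the paper packages the evaluation at $1_H\ot m$ through the $A$-linear map $\Phi=\beta^M_H\circ\lambda$ and the free-module Hom-tensor adjunction, but this is the same computation), and the same identifications: heptagon $\leftrightarrow$ coassociativity of $\rho^r$, and $A$-linearity of $\beta^M_H$ $\leftrightarrow$ the compatibility \eqref{eq:genYDcomp1}. So on the whole this is the intended proof.

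The one step where you go beyond the printed proof --- and where your argument does not actually close --- is counitality of $\rho^r$. The heptagon with $V$ or $W$ equal to the trivial module $k$, combined with the unit coherence of $(E,\psi,\psi_0)$ and naturality along $\epsilon_H:H\to k$, only yields that the map $u(m)=\epsilon_C(m_{[1]})m_{[0]}$ satisfies $(u\ot\Id)\circ\rho^r=\rho^r$ and $\rho^r\circ u=\rho^r$, hence that $u$ is an idempotent compatible with $\rho^r$; it does not force $u=\Id_M$. Concretely, for any left $A$-module $M$ the pair $(M,\beta^M=0)$ is natural, $A$-linear and satisfies the heptagon, so it is an object of $\Zz^w_E({}_A\Mod)$ as literally given by Definition~\ref{def:centerbiact}, yet the induced $\rho^r=0$ is not counital. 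The fix is to include the standard unit normalization of half-braidings (that $\beta^M_{1_\Cc}$ be the coherence isomorphism composed with $\psi_0$) in the definition of the lax center, after which counitality of $\rho^r$ is immediate rather than derived. The paper's own proof is silent on counitality altogether, so your attempt is at least as complete as the printed one; just be aware that the specific derivation you propose for this step is not valid from the heptagon alone.
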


\begin{proof}
An object in $\CZ^w_{C\tens_H -}({}_A\Mod)$ consist of an $A$-module $M$ together with a half-braid $\beta^M_V: V\ot M\to M\ot (C\ot_H V)$ natural in $V \in {}_H\Mod$. As usual, by naturality $\beta^M$ is completely determined by its component in the regular $H$-module:
\[\beta^M_H: H\ot M\to M\ot (C\ot_H H)\cong M\ot C.\]
We write $\beta^M_H(h\tens m) = m_\beta \tens h^\beta$.

Define a map $\lambda : A\tens M\to H\tens M$ by $\lambda(a\tens m) = a_{[-1]}\tens a_{[0]}m$. Using $\lambda$, we are able to define the following map
\[\xymatrixcolsep{3pc}\xymatrix{\Phi: A\tens M \ar[r]^-{\lambda} & H\tens M \ar[r]^-{\beta^M_H} & M\tens C
}.\]
One can check $\Phi$ is left $A$-linear by using the free action of $A$ on $A\tens M$ and the diagonal action on $M\tens C$. Thus, we have ${}_A\Hom(A\tens M, M\tens C)\cong \Hom(M, M\tens C)$, which allows us to define $\rho^r : M\to M\tens C$ by
\begin{equation}\label{eq:rho}\rho^r(m) = \Phi(1_A\tens m) = m_\beta \tens 1_H{}^\beta.\end{equation}

We first check that $\rho^r$ is coassociative. Ignoring associativity of the monoidal structures, consider the following diagram
\[\xymatrixcolsep{5pc}\xymatrix{
H\tens M \ar[r]^-{\Delta_H \tens \Id_M} \ar[d]_-{\beta^M_H} & H\tens H \tens M \ar[r]^-{\Id_H \tens \beta^M_H} \ar[d]^-{\beta^M_{H\tens H}} & H\tens M\tens C\ot_H H \ar[d]^-{\beta^M_H \tens \Id_C}\\
M\tens (C\tens_H H) \ar[r]^-{\Id_M \tens (\Id_C \tens_H \Delta_H)} \ar[d]_-{\cong} & M\tens (C\tens_H (H\tens H)) \ar[r]^-{\Id_M \tens \psi_{H,H}} & M\tens C\ot_H H\tens C\ot_H H \ar[d]^\cong\\
M\tens C \ar[rr]^-{\Id_M \tens \Delta_C} && M\ot C\ot C
}.\]
The left-square is commutative by naturality of $\beta^M$ in ${}_H\Mod$, the right-square is commutative by the condition of $\beta^M$ being a half-braiding in ${}_A\Mod$, and the bottom-triangle is commutative since $C$ is a right $H$-module coalgebra and the op-monoidal structure on $E$ is given by \eqref{Eopmonoidal}.
Therefore, we obtained
\[(\Id_M \tens \Delta_C)\circ \beta^M_H = (\beta^M_H \tens \Id_C)\circ (\Id_H \tens \beta^M_H)\circ(\Delta_H\tens \Id_M) \]
which translates via \eqref{eq:rho} to the coassociativity of $\rho^r$.

Now we show that $\rho^r$ and the action of $A$ on $M$ are compatible. Firstly, we have
\begin{eqnarray}
\Phi(a(1_A \tens m))&=& \Phi(a\tens m) = (a_{[0]}m)_\beta \tens a_{[-1]}{}^\beta \nonumber\\
&=& (a_{[0]}m)_\beta \tens 1_H{}^\beta \ra a_{[-1]} \nonumber\\
&=& (a_{[0]}m)_{[0]}\tens (a_{[0]}m)_{[1]}\ra a_{[-1]}.\label{Phia}
\end{eqnarray}
In the above calculation, the last equality is just the definition $\rho^r$ from \eqref{eq:rho} and the third equality is justified as follows. Take $h\in H$ and define $f_h : H\to H$ by $f_h(g) = hg$. This map is left $H$-linear. Therefore, by naturality of $\beta^M$, we have
\[\xymatrixcolsep{5pc}\xymatrix{
H\tens M \ar[r]^-{\beta^M_H} \ar[d]_-{f_h \tens \Id_M} & M\tens C \ar[d]^-{\Id_M \tens E(f_h)}\\
H\tens M \ar[r]^-{\beta^M_H} & M\tens C
}\]
which, by evaluating in $1_H \tens m$ for all $m\in M$, gives $m_\beta\tens h^\beta = m_\beta \tens 1_H{}^\beta \ra h$. 

On the other hand, we have
\begin{equation}\label{aPhi}
a\Phi(1_A\tens m)=a(m_\beta \tens 1_H{}^\beta)=a_{[0]}m_\beta \tens a_{[1]}\la 1_H{}^\beta= a_{[0]}m_{[0]} \tens a_{[1]} \la m_{[1]}.
\end{equation}
Since $\Phi$ is $A$-linear, the expressions \eqref{Phia} and \eqref{aPhi} are equal, which leads exactly to the compatibility condition \eqref{eq:genYDcomp1} for generalized Yetter-Drinfeld modules, making $M \in {}_A\YD^C(H,K)$.

For the converse, let $M\in {}_A\YD^C(H,K)$. We define the half-braiding $\beta^M_V : V\tens M \to M\tens (C\tens_H V)$ given by \begin{equation}\label{defbeta}
\beta^M_V(v\tens m) = m_{[0]}\tens (m_{[1]}\tens_H v),
\end{equation}
which is natural in $V\in {}_H\Mod$. We show that it obeys the condition for half-braiding \eqref{eq:halfbraid}:
\begin{align*}
((\Id_M \tens \psi_{V,W})\circ \beta^M_{V\tens W})(v\tens w \tens m) =& m_{[0]}\tens \psi_{V,W}(m_{[1]}\tens_H (v\tens w)) \\
=& m_{[0]}\tens (m_{[1](1)} \tens_H v)\tens (m_{[1](2)} \tens_H w) \\
=& m_{[0]}\tens (m_{[0][1]} \tens_H v)\tens (m_{[1]} \tens_H w)\\
=&(\beta^M_V \tens \Id_{C\tens_H W})(v\tens m_{[0]} \tens (m_{[1]}\tens_H w))\\
=& ((\beta^M_V \tens \Id_{C\tens_H W})\circ (\Id_V \tens \beta^M_W))(v\tens w \tens m),
\end{align*}
where $\psi_{V,W}$ is given by \eqref{Eopmonoidal}. Therefore, we conclude that $\CZ_E(_A\Mod)\cong {}_A\YD^C(H,K)$. 
\end{proof}

\begin{example}
In case $H=K=A=C$, we recover the classical result recalled above that the category ${_H\YD^H}$ of left-right Yetter-Drinfeld modules over a bialgebra $H$ coincides with the weak center $\Zz^w({_H\Mod})$ of the category left modules over $H$. 
\end{example}

As before, we only suppose that $k$ is a commutative ring. Then for any $k$-bialgebra $H$, the category of $H$-comodules is again a monoidal category, and similarly the category of $C$-comodules over a $(K,H)$-comodule coalgebra is a $(\Mod^K,\Mod^H)$-biactegory. Furthermore, if $A$ is an $(H,K)$-bicomodule algebra, then it induces a monoidal functor $-\cotens^HA:\Mod^H\to \Mod^K$. Therefore, a YD datum $(H,K,A,C)$ also induces a dual center datum (see Remark~\ref{dualcenter}). 
However, in this dual setting we will make an additional restriction and only consider projective $k$-modules, in particular we will suppose that all modules over $k$-algebras and comodules over $k$-coalgebras are projective as $k$-module. If $k$ is a field this is no restriction. In this setting we obtain the dual result of Theorem~\ref{thm:mainthm}.

\begin{theorem}\label{YDiscenter}
Let $(H,K,A,C)$ be a Yetter-Drinfeld datum and $(\Mod^K,\Mod^H,\Mod^C,E=-\cotens^HA)$ the corresponding dual center datum, where we only consider projective $k$-modules. Then the lax $E$-center of the $(\Mod^K, \Mod^H)$-biactegory $\Mod^C$ of left $A$-modules is isomorphic to the category of generalized YD modules:
$$\CZ^w_{-\cotens^HA}(\Mod^C)\cong {}_A\YD^C(H,K) \cong \CZ^w_{C\ot_H-}({_A\Mod}).$$
\end{theorem}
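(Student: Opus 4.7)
My strategy is to prove the first isomorphism $\CZ^w_{-\cotens^H A}(\Mod^C) \cong {}_A\YD^C(H,K)$ by dualising the argument of Theorem~\ref{thm:mainthm}; the isomorphism with $\CZ^w_{C\ot_H-}({}_A\Mod)$ is then Theorem~\ref{thm:mainthm} itself composed with this equivalence. The projectivity hypothesis is used to ensure that the cotensor product $-\cotens^H A:\Mod^H\to\Mod^K$ is a well-behaved $k$-linear functor with a monoidal structure $\gamma_{X,Y}:(X\cotens^H A)\ot(Y\cotens^H A)\to(X\ot Y)\cotens^H A$ induced by the multiplication of $A$, and that the canonical map $H\cotens^H A\to A$ given by $h\ot a\mapsto\epsilon(h)a$ is an isomorphism with inverse $a\mapsto a_{[-1]}\ot a_{[0]}$.

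For the direction $\CZ^w_{-\cotens^H A}(\Mod^C)\to{}_A\YD^C(H,K)$, I start from an object $(M,\beta^M)$, where $M$ already carries its given right $C$-coaction $\rho$, and construct a left $A$-action by evaluating the half-braid at the regular right $H$-comodule $X=H$. Using the isomorphism $H\cotens^H A\cong A$ recalled above, $\beta^M_H$ becomes a map $A\ot M\to M\ot H$, and postcomposing with $\id_M\ot\epsilon_H$ yields a map $\mu:A\ot M\to M$. Associativity of $\mu$ follows from the heptagon condition for $\beta^M$ evaluated at $X=Y=H$, combined with the description of $\gamma_{H,H}$ via the multiplication of $A$; unitality comes from evaluating at $X=k$ and using $\psi_0$. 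Finally, the generalised YD compatibility \eqref{eq:genYDcomp1} between $\mu$ and $\rho$ is precisely the translation of the requirement that $\beta^M_X$ be $C$-colinear for every $X$, once one unravels the $C$-coactions on $(X\cotens^H A)\ot M$ (inherited via the left $K$-action of $K$ on $C$) and on $M\ot X$ (inherited via the right $H$-action of $H$ on $C$).

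For the converse, given $M\in{}_A\YD^C(H,K)$, I define the half-braid by the simple formula
\[\beta^M_X\Bigl(\Bigl(\sum_i x_i\ot a_i\Bigr)\ot m\Bigr)=\sum_i a_i m\ot x_i,\qquad \sum_i x_i\ot a_i\in X\cotens^H A,\ m\in M.\]
Well-definedness (as the restriction of an obvious map $X\ot A\ot M\to M\ot X$) and naturality in $X$ are immediate, and the heptagon condition translates into the compatibility between the multiplication of $A$ and the monoidal comparison $\gamma_{X,Y}$. The nontrivial step is $C$-colinearity of $\beta^M_X$: expanding the two coactions and using the defining cotensor identity $\sum_i x_{i(0)}\ot x_{i(1)}\ot a_i=\sum_i x_i\ot a_{i[-1]}\ot a_{i[0]}$ to substitute $x_{i(1)}$ by $a_{i[-1]}$ and $a_i$ by $a_{i[0]}$ on one side, the identity one has to check becomes exactly \eqref{eq:genYDcomp1}. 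Checking that the two constructions are mutually inverse is then routine: the action recovered from $\beta^M_H$ via the counit is $\epsilon(a_{[-1]})a_{[0]}m=am$, and the half-braid reconstructed from the action equals $\sum_i a_im\ot x_i$.

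The main technical obstacle is, as in Theorem~\ref{thm:mainthm}, the explicit identification of $C$-colinearity of $\beta^M$ with the YD compatibility \eqref{eq:genYDcomp1}. This requires carefully tracking the four interacting structures on $A$ and $C$ (the left $H$- and right $K$-coactions of $A$ as an $(H,K)$-bicomodule algebra, and the left $K$- and right $H$-actions on $C$ as a $(K,H)$-bimodule coalgebra) and how they combine through the cotensor condition. The projectivity assumption is needed to guarantee that the cotensor product distributes over $k$-linear tensor products well enough for $\gamma_{X,Y}$ and the isomorphism $H\cotens^H A\cong A$ to exist and behave as expected.
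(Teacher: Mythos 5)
Your overall strategy coincides with the paper's: extract the $A$-action from the component of the half-braid at the regular comodule $H$ via $H\cotens^H A\cong A$ and the counit, and conversely define $\beta^M_X(x\ot a\ot m)=am\ot x$, with the $C$-colinearity of $\beta^M$ translating into \eqref{eq:genYDcomp1}. However, there is a genuine gap in the step you dismiss as routine, namely that the two constructions are mutually inverse. The nontrivial half of this is showing that an arbitrary half-braid $\beta^M_W$ is \emph{recovered} from the extracted action for every comodule $W$, i.e., that a natural transformation $-\cotens^H A\ot M\to M\ot -$ is completely determined by $(\Id_M\ot\epsilon_H)\circ\beta^M_H$. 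Your second clause (``the half-braid reconstructed from the action equals $\sum_i a_i m\ot x_i$'') merely restates the reconstruction formula; it does not show that this agrees with the original $\beta^M_W$.

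Unlike in Theorem~\ref{thm:mainthm}, this determination does not follow from naturality alone: in the module setting every $v\in V$ is hit by the $H$-linear map $H\to V$, $h\mapsto hv$, but in the comodule setting there are in general no colinear maps $H\to W$ hitting a prescribed element. The paper's proof instead uses, for each $w^*\in W^*$, the colinear map $f_{w^*}:W\to H$, $f_{w^*}(w)=w^*(w_{[0]})w_{[1]}$, which satisfies $\epsilon_H\circ f_{w^*}=w^*$; naturality gives $(\Id_M\ot w^*)\circ\beta^M_W=(\Id_M\ot\epsilon_H)\circ\beta^M_H\circ(f_{w^*}\ot\Id_M)$, and one concludes $\beta^M_W(w\ot a\ot m)=am\ot w$ because the family $\{\Id_M\ot w^*\}_{w^*\in W^*}$ is jointly monic when $M$ and $W$ are $k$-projective. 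This is the actual point where the projectivity hypothesis is essential; your proposal attributes it only to the good behaviour of the cotensor product (for which $k$-flatness already suffices, and the isomorphism $H\cotens^H A\cong A$ needs no hypothesis at all), and therefore has no mechanism to close the determination step. As written you obtain a well-defined functor in each direction and one of the two round-trip identities, but not the claimed isomorphism of categories.
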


\begin{proof}
Let us start by proving that that for any right $C$-comodule $M$, a natural transformation $\beta^M:-\cotens^H A\tens M\to M\tens -:{\Mod^H}\to \Mod^K$ is completely determined by the map
$$(\Id\tens \epsilon_H)\circ \beta^M_H: A\tens M\to M,$$
which we suggestively already denote as an action: $(\Id\tens \epsilon_H)\circ \beta^M_H(a\ot m)=am$. For any linear functional $w^*\in W^*$ on a right $H$-comodule $W$, we have now an $H$-colinear map $f_{w^*}: W\to H, f_{w^*}(w)=w^*(w_{[0]})w_{[1]}$. 
Since $\epsilon_H\circ f_{w^*}=w^*$, we then find
\begin{eqnarray*}
(\Id\tens w^*)\circ \beta^M_W &=& (\Id\tens \epsilon_H)\circ (\Id\tens f_{w^*})\circ \beta^M_W\\
&=& (\Id\tens \epsilon_H)\circ \beta^M_H\circ (f_{w^*}\ot \Id)
\end{eqnarray*}
where we used naturality of $\beta^V$ in the last equality. Hence we find for any $w\ot a\ot m\in W\cotens^HA\ot M$ and $w^*\in W^*$ that 
$$(\Id\ot w^*)\beta(w\ot a\ot m)=w^*(w)am.$$
Since $M$ and $W$ are projective as $k$-modules, the family of all maps $\Id\ot w^*$ is jointly monic. We then conclude that
$$\beta^M_W(w\ot a\ot m)=am\ot w.$$
As in the proof of Theorem~\ref{thm:mainthm}, the half-braid conditions on $\beta^M$ correspond to the associativity and unitality of the $A$-action on $M$ and the $C$-colinearity of $\beta^M$ translates to the YD condition on $M$.
\end{proof}

\begin{examples}
Let $H$ be a quasi-triangular Hopf algebra and $A$ a left $H$-comodule algebra. Then ${_H\Mod}$ is a braided monoidal category and ${_A\Mod}$ is a left ${_H\Mod}$-actegory. Then we can consider the reflexive center of ${_A\Mod}$ with respect to ${_H\Mod}$, which we know coincides with the $E$-relative center of ${_A\Mod}$ with respect to the identity functor $E:{_H\Mod}\to {_H\Mod}^{rev}$, where the monoidal structure is given by the braiding (see Remark~\ref{dualcenter}(4)). 
In \cite[Proposition 5.18]{Laugwitz} it was shown that the reflexive center of ${_A\Mod}$ with respect to ${_H\Mod}$ is isomorphic to a category of Doi-Hopf modules ${_A{\sf DH}^{\hat H}}(H)$, where $\hat H$ a suitably constructed module coalgebra over $H$, known as Majid's transmutation coalgebra. Taking into account the correspondence between generalized Yetter-Drinfeld modules and Doi-Hopf modules as described in \cite[Proposition 94]{BookStef}, this isomorphism of categories can be deduced from our main result above.

It has been proved in \cite{hassanzadeh} that the category of higher Yetter-Drinfeld modules ${}_H\YD^H(\Id, S^{2i})={_H\YD^{_{S^{2i}}H}}(H,H)$ is equivalent to the center of ${}_H\Mod$ regarded as a biactegory over itself with a modified action. This result is subsumed by Theorem~\ref{YDiscenter}.
\end{examples}

\subsection{Strong center, monoidal equivalences and Galois co-objects}
\label{se:Galois}

As in the previous section, let $C$ be a $(K,H)$-bimodule coalgebra and consider the associated op-monoidal functor $C\ot_H-:{_H\Mod}\to {_K\Mod}$. Recall the following result (see e.g.\ \cite{AV}, or \cite{Schauenburg:biGalois} for the dual case).
\begin{proposition}
Let $H$ and $K$ be bialgebras and $C$ be a $(K,H)$-bimodule coalgebra. Then the associated op-monoidal functor $C\ot_H-:{_H\Mod}\to {_K\Mod}$ is a monoidal equivalence if and only if $C$ is an $(K,H)$-bi-Galois co-object. The latter means that the canonical maps
\begin{eqnarray*}
\lcan: K\ot C\to C\ot C,&& \lcan(k\ot c)=k\la c_{(1)}\ot c_{(2)}; \quad \lcan_0: \kk\ot_K C \to \kk,\ \lcan_0 (\lambda \ot_K c)=\lambda \epsilon_C(c);\\
\rcan: C\ot H\to C\ot C,&& \rcan(c\ot h)=c_{(1)}\ot c_{(2)}\ra h; \quad \rcan_0: C\ot_H \kk \to \kk,\ \rcan_0 (c\ot_H \lambda)=\epsilon_C(c)\lambda.
\end{eqnarray*}
are bijective.
\end{proposition}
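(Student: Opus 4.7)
The plan is to decompose the condition ``$F=C\ot_H-$ is a monoidal equivalence'' into two parts: (i) the op-monoidal coherence $(\psi,\psi_0)$ is strong, which we shall match with the bijectivity of the right canonical maps $\rcan$ and $\rcan_0$; and (ii) the underlying functor $F$ is an equivalence of categories, which will correspond to bijectivity of the left canonical maps $\lcan$ and $\lcan_0$. Together these characterise $F$ as a monoidal equivalence, and the proposition will follow.

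For (i), the structure morphism $\psi_{V,W}$ is $\kk$-linear and cocontinuous in each argument, so by density of the regular module $H$ in ${}_H\Mod$ its strongness reduces to the invertibility of $\psi_{H,H}$ (and of $\psi_0$). Under the iso $C\ot_H H\cong C$, $c\ot_H h\mapsto c\ra h$, one computes $\psi_{H,H}(c\ot_H(h\ot g))=(c\o\ra h)\ot(c\t\ra g)$. Pre-composing with $\alpha\colon C\ot H\to C\ot_H(H\ot H)$, $c\ot h\mapsto c\ot_H(1\ot h)$, yields exactly $\rcan$, and $\psi_0$ is literally $\rcan_0$. Hence the strongness of $(\psi,\psi_0)$ is equivalent to the bijectivity of $\rcan$ and $\rcan_0$ together with the bijectivity of $\alpha$; the latter is obtained by constructing an explicit inverse from $\rcan^{-1}$, which serves as a ``translation map'' that straightens out the diagonal $H$-action on $H\ot H$.

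For (ii), once $(\psi,\psi_0)$ is strong, $F$ is strong monoidal and thus its right adjoint $G$ is lax monoidal. Applying the lemma immediately preceding the proposition in the reverse direction yields $G\cong\ol{C}\ot_K-$ for some $(H,K)$-bimodule coalgebra $\ol C$. Cocontinuity of the unit $\eta\colon\Id\to GF$ and counit $\varepsilon\colon FG\to\Id$ reduces their invertibility to invertibility on the regular modules $H$ and $K$ respectively, and a calculation analogous to Step (i) identifies $\varepsilon_K$ with a map built from $\lcan_0$ and $\eta_H$ with one built from $\lcan$. Combining (i) and (ii) yields both directions of the proposition.

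The principal obstacle lies in Step (i), namely in showing that $\alpha$ is bijective whenever $\rcan$ is: in the absence of an antipode on $H$, the diagonal $H$-action on $H\ot H$ admits no free-module description, and $\alpha$ is neither injective nor surjective in general. The standard remedy is to construct the inverse of $\alpha$ explicitly from $\rcan^{-1}$, which in this setting plays the role of a twisted antipode on $C$. Making this construction rigorous, and verifying its compatibility with the $H$-balanced tensor product, is the technical core of the proof.
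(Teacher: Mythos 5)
The paper itself offers no proof of this proposition: it is explicitly \emph{recalled} from \cite{AV} (in preparation), with \cite{Schauenburg:biGalois} cited for the dual case, so there is no in-paper argument to compare yours against. Judged on its own terms, your outline has the right general shape (reduce to the regular module by cocontinuity, identify $\psi_0$ with $\rcan_0$ and $\psi_{H,H}$ with $\rcan$ up to the comparison map $\alpha$, handle the equivalence property through the unit and counit of the adjunction), but its two load-bearing claims are not established, and the first one is actually false at the stated level of generality. The claimed equivalence ``$(\psi,\psi_0)$ strong $\Leftrightarrow$ $\rcan,\rcan_0$ bijective'' fails for bialgebras: take $C=H=K$ a non-Hopf bialgebra, e.g.\ $k[x]$ with $x$ grouplike. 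Then $C\ot_H-$ is the identity functor with the identity op-monoidal structure, hence strong (indeed a monoidal equivalence), while $\rcan(h\ot g)=h_{(1)}\ot h_{(2)}g$ is not surjective. The culprit is exactly the map $\alpha:C\ot H\to C\ot_H(H\ot H)$ you isolate: it is bijective when $H$ is Hopf, with inverse $c\ot_H(h\ot g)\mapsto c\ra h_{(1)}\ot S(h_{(2)})g$, but not for general bialgebras, and your remedy of building $\alpha^{-1}$ from $\rcan^{-1}$ is only available in the direction where $\rcan$ is already assumed invertible. In the direction ``monoidal equivalence $\Rightarrow$ bi-Galois'' there is no $\rcan^{-1}$ to start from, so the argument is circular precisely where it is needed. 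Either the Hopf hypothesis must be imposed from the outset (clearly the intended reading: the paper only applies the proposition to Hopf algebras and notes elsewhere that bi-Galois co-objects force the bialgebras to be Hopf), or the ``only if'' direction must extract the Galois data from the quasi-inverse of the equivalence, which is the actual content of the Schauenburg-type argument and is absent from your sketch.

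Step (ii) is likewise a placeholder rather than a proof. Writing the right adjoint as $\ol C\ot_K-$ via the preceding Eilenberg--Watts lemma requires knowing that this adjoint is cocontinuous, which is automatic only once one already knows $C\ot_H-$ is an equivalence; in the converse direction one must construct $\ol C$ explicitly (the co-opposite coalgebra with antipode-twisted actions, as in Section 4.2 of the paper) and exhibit the unit and counit using the dual bases produced by $\lcan^{-1}$ and $\rcan^{-1}$ and an element $u$ with $\epsilon_C(u)=1$. The identification of $\eta_H$ and $\varepsilon_K$ with maps ``built from'' $\lcan$ and $\lcan_0$ is precisely the descent-theoretic core of the Galois correspondence; asserting that ``a calculation analogous to Step (i)'' yields it does not discharge it, especially since Step (i) itself is incomplete.
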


The quasi-inverse of the monoidal equivalence $C\ot_H-$ is again of the form $\ol C\ot_K-:{_K\Mod}\to {_H\Mod}$ for some $(H,K)$-bimodule coalgebra $\ol C$. In case $H$ and $K$ have bijective antipodes, $\ol C$ is given by the co-opposite coalgebra of $C$, endowed with actions
$$h\bla c\bra k:=S^{-1}_K(k) \la c\ra S^{-1}_H(h).$$

The functors $C\ot_H-$ and $\ol C\ot_K-$ being inverse equivalences, there is a strict Morita context $(H,K,\ol C,C,\wedge,\vee)$. The Morita maps can be expressed in terms of the bi-Galois co-object structures by the following formula:
\begin{eqnarray}
\wedge: \ol C\ot_K C\to H, &\quad & \ol c\wedge c = (\epsilon_C\ot \Id_H)\circ \rcan^{-1}(\ol c \ot c);\label{wedge}\\
\vee: C\ot_H \ol C\to K, && c\vee \ol c = (\Id_K\ot \epsilon_C)\circ \lcan^{-1}(c\ot \ol c). \label{vee}
\end{eqnarray}
These maps satisfy in particular the following identities:
\begin{eqnarray}
c_{(1)}\wedge c_{(2)}=&\epsilon(c)&=c_{(1)}\vee c_{(2)}; \label{wedgeveeepsilon}\\
c\ra (d\wedge e)&=&(c\vee d)\la e. \label{wedgeveeass}
\end{eqnarray}
The bijectivity of $\ul\can_0$ (or $\ol\can_0$) furthermore guarantees the existence of an element $u\in C$ such that $\epsilon_C(u)=1$. Then $\{(u_{(1)}, u_{(2)}\wedge- )\}$ is a dual base for $C$ as right $H$-module and $\{(u_{(2)}, -\vee u_{(1)})\}$ is a dual base of $C$ as left $K$-module.

The Morita maps $\wedge$ and $\vee$ are moreover coalgebra maps. Therefore we find that the element  $\vee^{-1}(1_K)=u_{(1)}\ot_H u_{(2)}\in C\ot \ol C$ is grouplike, and therefore we have the identity
\begin{equation}
(u_{(1)}\ot_H u_{(4)})\ot (u_{(2)}\ot_H u_{(3)}) = (u_{(1)}\ot_H u_{(2)}) \ot (u_{(1')}\ot_H u_{(2')}) \label{ugrouplike}
\end{equation}
From these identity, it follows that the map
\begin{equation}
\ol\sigma: C\to \ol C,\ \ol\sigma(c)= S^{-1}_H(u_{(1)}\wedge c)\bla u_{(2)}\label{sigma}
\end{equation}
is a coalgebra map that mimics some properties of the inverse antipode of $H$. In particular, we have the following identity
\begin{equation}\label{sigmaantipode}
\ol\sigma(c_{(2)})\wedge c_{(1)} = \epsilon_C(c)1_H
\end{equation}
Furthermore, $\ol\sigma$ respects the actions in the following way.
\begin{equation}
\ol\sigma(k\la c\ra h)=S^{-1}_H(h)\bla \ol\sigma(c) \bra S^{-1}_K(k) \label{sigmalinear}
\end{equation}
For more details about the above constructions we refer to \cite{AV}.\\

Our next aim is to provide a characterization of bi-Galois co-objects in terms of the relative center. However, before doing this, we recall the notion of a totally faithful module and some of its properties, which will turn out to be useful in what follows.

Let $R$ be any ring. Recall from \cite[Definition 2.1]{CDV} that a right $R$-module $P$ is called {\em totally faithful} if for any left $R$-module $M$, we have $m=0$ in $M$ whenever $p\ot_R m=0$ in $P\ot_R M$ for all $p\in P$. In other words, $P$ is totally faithful if and only if the map 
$$\eta_N: N\to {\sf Ab}(P,P\ot_R N),\quad n\mapsto (p\mapsto p\ot_R n)$$ is injective for all $N\in {_R\Mod}$. Remark that $\eta_N$ is the unit of the Hom-tensor adjunction for $P$. Faithfully flat modules are clearly totally faithful. 

On the other hand, a right $R$-module morphism $f:N\to P$ is called {\em pure} if for any left $R$-module $M$, the map $f\ot_R \Id_R : N\ot_R M\to P\ot_R M$ is injective. In particular $f$ itself is injective and  we also say that $N\cong f(N)$ is a pure sub-module of $P$. Recall from \cite[Lemma 2.2]{CDV} that both notions are related in the way that an $(S,R)$-bimodule $P$ that is finitely generated and projective as left $S$-module is totally faithful as right $R$-module if and only if the canonical $R$-linear map $R\to {_S\End}(P)$ is pure.

The following result is also implicit in \cite{CDV} and \cite{mesablishvili}.

\begin{lemma}\label{totallyfaithful}
If $P$ is totally faithful as right $R$-module, then the functor $P\ot_R-: {_R\Mod}\to {\sf Ab}$ reflects monomorphisms and isomorphisms.
\end{lemma}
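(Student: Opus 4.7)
My plan is to unwind the definition of totally faithful directly and exploit the fact that $P\ot_R-$ is right exact, so that it behaves well on cokernels. Throughout, let $f:M\to N$ be an $R$-linear map.

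\textbf{Monomorphisms.} Assume $P\ot_R f$ is injective; I will show $f$ is injective. Take $m\in M$ with $f(m)=0$. Then for every $p\in P$ one has $(P\ot_R f)(p\ot_R m)=p\ot_R f(m)=0$. Injectivity of $P\ot_R f$ forces $p\ot_R m = 0$ in $P\ot_R M$ for all $p\in P$. Total faithfulness of $P$ then yields $m=0$, so $f$ is injective. (Equivalently, one may invoke naturality of the unit $\eta$: the square $\eta_N\circ f = {\sf Ab}(P, P\ot_R f)\circ \eta_M$ together with injectivity of $\eta_M$ and of ${\sf Ab}(P, P\ot_R f)$ gives the same conclusion.)

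\textbf{Isomorphisms.} Assume now that $P\ot_R f$ is an isomorphism. The previous paragraph already shows $f$ is injective, so it remains to prove surjectivity. Consider the cokernel $Q=N/f(M)$ and the right-exact sequence
\[
P\ot_R M \xrightarrow{\,P\ot_R f\,} P\ot_R N \longrightarrow P\ot_R Q \longrightarrow 0.
\]
Since $P\ot_R f$ is surjective, exactness forces $P\ot_R Q=0$. In particular $p\ot_R \bar n =0$ in $P\ot_R Q$ for every $p\in P$ and every $\bar n\in Q$, and total faithfulness of $P$ gives $\bar n =0$. Hence $Q=0$, i.e.\ $f$ is surjective, and therefore an isomorphism.

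There is no real obstacle here: the argument is a routine application of the definition of total faithfulness combined with right exactness of the tensor product. The only subtlety to keep in mind is that one does \emph{not} need $P$ to be flat (which would give reflection of injections via a two-sided exactness argument); total faithfulness is strictly weaker but still enough to conclude via the cokernel trick in the second step and via pointwise vanishing of simple tensors in the first.
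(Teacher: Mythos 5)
Your proof is correct and follows essentially the same route as the paper: the monomorphism step is identical, and the surjectivity step likewise reduces to showing that the cokernel $Q=N/f(M)$ is annihilated by $P\ot_R-$ and then invoking total faithfulness. The only difference is presentational — you obtain $P\ot_R Q=0$ directly from right-exactness of the tensor product, whereas the paper reaches the same conclusion by a diagram chase through the unit $\eta$ of the Hom-tensor adjunction; your version is a clean streamlining of the same argument.
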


\begin{proof}
First, let $f:N\to M$ be a map such that $P\ot_R f:P\ot_R N\to P\ot_R M$ is injective. Then take any $n\in \ker f$ and any $p\in P$. We find that $p\ot_R f(n)=0$ for all $p\in P$ and by injectivity of $P\ot_R f$ this means $p\ot_R n=0$ for all $p\in P$. Then, by total faithfulness, we find $n=0$.

Now suppose that $P\ot_R f$ is an isomorphism. We have to prove that $f$ is surjective. Consider the following diagram where the undecorated arrows on the lower row are given by applying the functor $\Hom(P,P\ot_R-)$ to the corresponding maps in the upper row.
\[\xymatrix{
N \ar[rr]^-f \ar@{^(->}[d]_{\eta_N} && M \ar[rr]^-\pi \ar@{^(->}[d]_{\eta_M} && M/f(N) \ar@{^(->}[d]_{\eta_{M/f(N)}} \ar[r] & 0\\
\Hom(P,P\ot_R N) \ar[rr] && \Hom(P,P\ot_R M) \ar[rr] && \Hom(P,P\ot_R (M/f(N)))
}\]
The upper row is exact by construction and therefore the composition of both maps in the lower row is the zero map. By total faithfulness, we know that the vertical maps are injective. Since $P\ot f$ is an isomorphism, $\Hom(P,P\ot_R f)$ is an isomorphism as well. Therefore, the image of the map $\Hom(P,P\ot_R \pi)$ must be zero. By commutativity of the diagram and the surjectivity of $\pi$, this means that the image of $\eta_{M/f(N)}$ is zero as well and hence $M/f(N)$ is zero as $\eta$ is always injective. Since $M/f(N)$ is zero, it follows that $M=f(N)$, and thus $f$ is surjective.
\end{proof}

\begin{theorem}
As in Theorem~\ref{thm:mainthm}, let $(H,K,A,C)$ be a Yetter-Drinfeld datum and consider the corresponding center datum $({_H\Mod},{_K\Mod},{_A\Mod},E=C\ot_H-)$. Then the following statements hold.
\begin{enumerate}
\item If $\rcan$ is an isomorphism then the lax $E$-center of ${_A\Mod}$ coincides with the strong $E$-center. 
$$\Zz^w_{C\ot_H-}(_A\Mod) = \Zz_{C\ot_H-}(_A\Mod)\cong {_A\YD^C}(H,K)$$
\item Conversely, if  the lax $E$-center coincides with the strong $E$-center and $A$ is totally faithful as $k$-module, or $H$ and $C$ are flat as left $k$-module and ${^{co K}A}\cong k$, then $\rcan$ is an isomorphism. 
\end{enumerate}
\end{theorem}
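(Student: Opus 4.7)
The two directions use complementary techniques: part~(1) is a direct construction of an inverse half-braid from $\rcan^{-1}$, while part~(2) is a reflection argument on a specific generalized YD module.

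For part~(1), via Theorem~\ref{thm:mainthm} any object of $\Zz^w_{C\ot_H-}({_A\Mod})$ corresponds to a generalized YD module $(M,\rho^r)$ with half-braid $\beta^M_V(v\ot m)=m_{[0]}\ot(m_{[1]}\ot_H v)$. Inspired by the Hopf algebra special case (where $(\epsilon_C\ot\Id_H)\rcan^{-1}$ specializes to the antipode), I would define
\[\beta^{-1}_{V,M}(m\ot c\ot_H v):=\bigl((\epsilon_C\ot\Id_H)\rcan^{-1}(m_{[1]}\ot c)\bigr)\cdot v\ot m_{[0]}.\]
This formula recovers the familiar $S(m_{[1]})cv\ot m_{[0]}$ in the Hopf case. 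Well-definedness on $\ot_H V$ comes from the right $H$-linearity of $\rcan$ (inherited by $\rcan^{-1}$), which makes the prescription $H$-balanced. The inverse identities are verified by combining the defining relation $\omega_{1,(1)}\ot\omega_{1,(2)}\ra\omega_2=c'\ot c$ for $\rcan^{-1}(c'\ot c)=\omega_1\ot\omega_2$, coassociativity of $\rho^r$, and the compatibility $\epsilon_C(d\ra h)=\epsilon_C(d)\epsilon_H(h)$ of the action with the counits.

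For part~(2), the strategy is to build a generalized YD module whose half-braid at $V=H$ is an isomorphism if and only if $\rcan$ is, and invoke the hypothesis (lax $=$ strong) to conclude. The natural candidate is $M=A\ot C$ with free $A$-action $a\cdot(b\ot c)=ab\ot c$ and cofree coaction $\rho^r(b\ot c)=(b\ot c_{(1)})\ot c_{(2)}$; a direct computation gives $\beta^M_H(h\ot b\ot c)=(b\ot c_{(1)})\ot c_{(2)}\ra h$, which under the swap and the identification $C\ot_H H\cong C$ is exactly $\Id_A\ot\rcan$. Assuming lax $=$ strong, this map is an isomorphism. Under the hypothesis that $A$ is totally faithful as a $k$-module, Lemma~\ref{totallyfaithful} forces the isomorphism property to descend from $\Id_A\ot\rcan$ to $\rcan$. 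Under the alternative hypothesis (flatness of $H$ and $C$ and ${}^{co K}A\cong k$), the $K$-coaction on the $A$-tensorand together with flatness allows one to take $K$-coinvariants of $\Id_A\ot\rcan$; the computation $(A\ot C\ot H)^{coK}\cong {}^{coK}A\ot C\ot H\cong C\ot H$ and similarly on the target side recovers $\rcan:C\ot H\to C\ot C$ as an isomorphism.

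The principal obstacle in part~(2) is that the naive structure on $A\ot C$ just described does not in general satisfy the YD compatibility~\eqref{eq:genYDcomp1}: a direct computation reveals that it imposes the extra constraint $c_{(2)}\ra a_{[-1]}=a_{[1]}\la c_{(2)}$ relating the $(H,K)$-bicoaction of $A$ to the $(K,H)$-biaction of $C$. I would remedy this by twisting the $A$-action with a correction built from the bi-coaction data of $A$, so as to absorb the offending interaction while yielding a genuine YD module, and then verify that this twist does not affect the underlying $k$-linear map identified with $\Id_A\ot\rcan$ under the swap and identification. This careful bookkeeping, together with justifying the exactness of the $K$-coinvariants step under the second hypothesis, constitutes the principal technical content of part~(2).
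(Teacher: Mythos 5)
Your proposal is correct and follows essentially the same route as the paper: in part (1) your inverse $\bigl((\epsilon_C\ot\Id_H)\rcan^{-1}(m_{[1]}\ot c)\bigr)\cdot v\ot m_{[0]}$ is exactly the paper's $(m_{[1]}\wedge c)\la v\ot m_{[0]}$, and in part (2) the paper uses the same test object $A\ot C$ with cofree coaction, resolving the compatibility obstruction you correctly identified by precisely the twist you sketch, namely $a'\cdot(a\ot c)=a'_{[0]}a\ot a'_{[1]}\la c\ra S^{-1}(a'_{[-1]})$ (which, as you note, leaves the coaction and hence the map $\Id_A\ot\rcan$ untouched). The reduction from $\Id_A\ot\rcan$ to $\rcan$ via Lemma~\ref{totallyfaithful}, respectively via $K$-coinvariants under the flatness hypotheses, also matches the paper.
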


\begin{proof}
If $\rcan$ is an isomorphism, then we define for each $M\in {}_A\YD^C(H,K)$ and each $V\in {}_H\Mod$ a map
$$\ol\beta^M_V: M\ot C\ot_H V\to V\ot M,\quad \ol\beta^M_V(m\ot c\ot_H v)=(m_{[1]}\wedge c)\la v\ot m_{[0]}$$
where $m\in M$, $c\in C$, $v\in V$ and we used the notation of \eqref{wedge}. Let us check that $\ol\beta^M_V$ is a two-sided inverse for $\beta^M_V$ defined in \eqref{defbeta}.

\begin{eqnarray*}
\ol\beta^M_V\circ \beta^M_V(v\ot m)&=&\ol\beta^M_V(m_{[0]}\ot m_{[1]}\ot_H v)
= (m_{[1]}\wedge m_{[2]})\la v\ot m_{[0]}\\
&\stackrel{\eqref{wedgeveeepsilon}}{=}& \epsilon(m_{[1]})v\ot m_{[0]}= v\ot m\\
\beta^M_V\circ \ol \beta^M_V(m\ot c\ot_H v)
&=& \beta^M_V((m_{[1]}\wedge c)\la v\ot m_{[0]})
= m_{[0]}\ot m_{[1]}\ot_H (m_{[2]}\wedge c)\la v\\
&=& m_{[0]}\ot m_{[1]}\ra (m_{[2]}\wedge c)\ot_Hv
\stackrel{\eqref{wedgeveeass}}{=} m_{[0]}\ot (m_{[1]}\vee m_{[2]})\la c\ot_Hv\\
&\stackrel{\eqref{wedgeveeepsilon}}{=}& m_{[0]} \ot \epsilon(m_{[1]})c\ot_H v
= m\ot c\ot_H v
\end{eqnarray*}

Conversely, consider the YD module $A\ot C$ with actions and coactions given by
\begin{eqnarray*}
a'\cdot (a\ot c)&=&a'_{[0]}a\ot a'_{[1]}\la c \ra S^{-1}(a'_{[-1]})\\
\rho(a\ot c)&=& a\ot c_{(1)}\ot c_{(2)}.
\end{eqnarray*}
Furthermore, consider the regular $H$-module, then we find that
$$\beta^{A\ot C}_H : H\ot A\ot C\to A\ot C \ot C\ot_H H \cong A \ot C\ot C,\quad \beta^{A\ot C}_H (h\ot a\ot c)=a\ot c_{(1)}\ot c_{(2)}\ra h.$$
This means $\beta^{A\ot C}_H = (\Id_A\ot \rcan)\circ (\mathrm{flip}\ot \Id_C)$ where here $\mathrm{flip} : H\tens A \to A\tens H$ is the symmetry on $k$-modules, and hence if $\beta$ is invertible then $\Id_A\ot \rcan$ is invertible, which implies that $\rcan$ is invertible whenever $A$ is totally faithful as $k$-module (see Lemma \ref{totallyfaithful}). On the other hand, if $C$ and $H$ are flat as left $k$-modules, then ${^{co K}(A\ot C\ot H)} \cong ({^{co K}A})\ot C\ot H$. The later is isomorphic to $C\ot H$ if moreover the $K$-coinvariants of $A$ are trivial. Similarly we find under the same conditions that ${^{co K}(A\ot C\ot H)}\cong C\ot C$. Therefore, applying the functor that takes $K$-coinvariants to the isomorphism $\Id_A\ot \rcan$, we obtain that $\rcan$ is an isomorphism.
\end{proof}

For sake of completeness, let us also state the dual result, where we again restrict to the case that all considered $k$-modules are projective. 

\begin{theorem}
Let $(H,K,A,C)$ be a Yetter-Drinfeld datum and $(\Mod^K,\Mod^H,\Mod^C,E=-\cotens^HA)$ the corresponding dual center datum, where we only consider projective $k$-modules.\\
Then the following statements hold.
\begin{enumerate}  
\item If %$\lcan_A : H\ot A \to A\ot A$ given by $\lcan_A(h\ot a)=ha_{[0]}\ot a_{[1]}$
$\lcan_A:  A\ot A\to H\ot A$ given by $\lcan_A(a\ot b)=a_{[-1]}\ot a_{[0]}b$ is an isomorphism, then the lax $E$-center of ${\Mod^C}$ coincides with the strong $E$-center. 
$$\Zz^w_{-\cotens^HA}(\Mod^C) = \Zz_{-\cotens^HA}(\Mod^C)\cong {_A\YD^C}(H,K)$$
\item Conversely, if  the lax $E$-center coincides with the strong $E$-center and $C$ is totally faithful as $k$-module or $\rcan_0$ is bijective, then $\lcan_A$ is an isomorphism. 
\end{enumerate}
\end{theorem}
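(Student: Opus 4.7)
The plan is to mirror the proof of Theorem~\ref{thm:mainthm}, with $\lcan_A$ playing the role of $\rcan$ and the translation map of $A$ standing in for the Morita pairing $\wedge$.

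For part (1), assuming $\lcan_A$ is invertible, I introduce the translation map
$$\tau:H\to A\ot A,\qquad \tau(h)=h^{\{1\}}\ot h^{\{2\}}:=\lcan_A^{-1}(h\ot 1_A).$$
Three identities will be essential: (i) $h^{\{1\}}h^{\{2\}}=\epsilon_H(h)1_A$, obtained by applying the multiplication of $A$ to $\lcan_A\circ\tau(h)=h\ot 1_A$; (ii) $a_{[-1]}^{\{1\}}\ot a_{[-1]}^{\{2\}}a_{[0]}=a\ot 1_A$, which is $\lcan_A^{-1}\circ\lcan_A=\Id$ applied to $a\ot 1_A$; and (iii) the left $H$-colinearity
$$(h^{\{1\}})_{[-1]}\ot(h^{\{1\}})_{[0]}\ot h^{\{2\}}=h_{(1)}\ot h_{(2)}^{\{1\}}\ot h_{(2)}^{\{2\}},$$
which follows from the fact that $\lcan_A$ is itself a morphism of left $H$-comodules. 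Given $M\in{}_A\YD^C(H,K)$ and a right $H$-comodule $W$ with coaction $w\mapsto w_{[0]}\ot w_{[1]}$, recall from the proof of Theorem~\ref{YDiscenter} that the half-braid is $\beta^M_W((w\ot a)\ot m)=am\ot w$. I will define the candidate inverse
$$\ol\beta^M_W(m\ot w)=w_{[0]}\ot w_{[1]}^{\{1\}}\ot w_{[1]}^{\{2\}}m\in W\ot A\ot M.$$
Identity (iii) together with coassociativity of the $W$-coaction shows that the two parallel maps defining the cotensor product agree on this expression, so $\ol\beta^M_W$ factors through $(W\cotens^H A)\ot M$ (using flatness of $M$ over $\kk$, guaranteed by the projectivity hypothesis). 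The composition $\beta^M_W\circ\ol\beta^M_W=\Id$ follows from identity (i) and counitality of $W$; the reverse composition $\ol\beta^M_W\circ\beta^M_W=\Id$ is obtained by applying $\Id_W\ot\lcan_A^{-1}$ to both sides of the cotensor relation $w_{[0]}\ot w_{[1]}\ot a=w\ot a_{[-1]}\ot a_{[0]}$ and invoking identity (ii).

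For part (2), I dualize the converse of Theorem~\ref{thm:mainthm}. Consider the YD module $M=C\ot A$ with untwisted $A$-action $a'\cdot(c\ot b)=c\ot a'b$ and twisted $C$-coaction
$$\rho^r(c\ot b)=(c_{(1)}\ot b_{[0]})\ot b_{[1]}\la c_{(2)}\ra S_H^{-1}(b_{[-1]});$$
verifying the YD compatibility~\eqref{eq:genYDcomp1} is a direct computation combining the algebra-colinearity of $\Delta_L$ on $A$, the $(K,H)$-bilinearity of $\Delta_C$, and the antipode identity $S_H^{-1}(h_{(2)})h_{(1)}=\epsilon_H(h)$, which implicitly requires invertible antipode on $H$, exactly as in the converse of Theorem~\ref{thm:mainthm}. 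Taking $W=H$ and using the canonical isomorphism $H\cotens^H A\cong A$, the half-braid simplifies to
$$\beta^{C\ot A}_H(a\ot(c\ot b))=(c\ot a_{[0]}b)\ot a_{[-1]},$$
which after permuting tensor factors coincides with $\Id_C\ot\lcan_A$. Hence invertibility of $\beta^{C\ot A}_H$ (guaranteed by $\Zz^w_E(\Mod^C)=\Zz_E(\Mod^C)$) forces invertibility of $\Id_C\ot\lcan_A$. To descend to $\lcan_A$ itself, I apply Lemma~\ref{totallyfaithful} when $C$ is totally faithful as a $\kk$-module; alternatively, if $\rcan_0$ is bijective it yields an isomorphism $C\ot_H \kk\cong \kk$, allowing me to tensor $\Id_C\ot\lcan_A$ with $-\ot_H\kk$ (with $H$ acting only on the $C$-factor) to contract away the $C$ and recover $\lcan_A$.

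The main obstacle lies in part (2): constructing the YD structure on $M=C\ot A$ and verifying YD compatibility require careful tracking of iterated bicomodule coactions of $A$ and their interaction with the antipode twist $S_H^{-1}$ and the $(K,H)$-bimodule coalgebra structure of $C$.
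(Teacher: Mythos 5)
Your proposal is correct and carries out exactly the dualization that the paper leaves implicit (the paper states this theorem without proof, as the mirror of the preceding result on $\rcan$): the translation map $\tau=\lcan_A^{-1}(-\ot 1_A)$ plays the role of the Morita pairing $\wedge$ in part (1), and the test object $C\ot A$ with free action and antipode-twisted coaction is the precise dual of the paper's $A\ot C$, with the two reduction hypotheses ($C$ totally faithful via Lemma~\ref{totallyfaithful}, or $\rcan_0$ bijective via $-\ot_H\kk$) matching the stated alternatives. The only cosmetic slip is that the "converse" you dualize is part (2) of the unnumbered theorem on $\rcan$ in Section~\ref{se:Galois}, not of Theorem~\ref{thm:mainthm}.
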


Finally it is worthwhile to remark that it can happen that $C$ is a Galois co-object, while $A$ is not a Galois object. In such a case, the weak center $\Zz^w_{-\cotens^HA}(\Mod^C)$ coincides with the strong center $\Zz_{C\ot_H-}(_A\Mod)$, while these are strictly bigger than the strong center $\Zz_{-\cotens^HA}(\Mod^C)$, as otherwise $A$ would be Galois as well.

\subsection{Braided structure on generalized Yetter-Drinfeld modules}
\label{braidingYD}

Consider YD data $(H,K,A,C)$ and $(K,L,B,D)$. Then 
the tensor product $D\tens_K C$ is an $(L,H)$-bimodule coalgebra with left and right action
\begin{align}
\la:& L\tens D\tens_K C \to D\tens_K C, \qquad l \la (d\tens_K c) = (l\la d)\tens_K c\label{eq:calc3}\\
\ra:& D\tens_K C \tens H \to D\tens_K C, \qquad (d\tens_K c)\ra h = d\tens_K (c\ra h)\label{eq:calc4}
\end{align}
for all $d\tens_K c \in D\tens_K C, h\in H$, and $l\in L$. 
If furthermore the cotensor product $A\cotens^K B$ is a pure $k$-submodule of $A\ot B$ (e.g. when $k$ is a field or when $A=K$) , then $A\cotens^K B$ is an $(H,L)$-bicomodule algebra with left and right coaction
\begin{align}
(a\tens b)_{[-1]}\tens (a\tens b)_{[0]}  =& a_{[-1]}\tens (a_{[0]}\tens b) \in H \tens A\cotens^K B \label{eq:calc1} \\
(a\tens b)_{[0]}\tens (a\tens b)_{[1]}  =& (a\tens b_{[0]})\tens b_{[1]} \in A\cotens^K B \tens L \label{eq:calc2} 
\end{align}
for all $a\tens b \in A\cotens^K B$. This makes $(H,L,A\cotens^K B, C\tens_K D)$ a YD datum. 

\begin{proposition}\label{prop:MonoidalGenYD}
With notation and under conditions as stated above, the usual tensor product over the underlying base $\kk$ lifts strictly to a bifunctor
\[{}_A\YD^C(H,K)\times {}_B\YD^D(K,L)\to {}_{A\cotens^K B}\YD^{D\tens_K C}(H, L).\]
\end{proposition}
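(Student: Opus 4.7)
My plan is to exhibit explicit module and comodule structures on the $k$-linear tensor product $M\otimes N$, and then reduce the generalized YD compatibility to the compatibilities for $M$ and $N$ combined with the cotensor relation in $A\cotens^K B$.

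Concretely, for $M\in {}_A\YD^C(H,K)$ and $N\in {}_B\YD^D(K,L)$, I endow $M\otimes N$ with the $A\cotens^K B$-action $(a\otimes b)\cdot(m\otimes n):=am\otimes bn$ (well-defined as the restriction of the diagonal $A\otimes B$-action) and the $D\tens_K C$-coaction $\rho^r(m\otimes n):=m_{[0]}\otimes n_{[0]}\otimes (n_{[1]}\tens_K m_{[1]})$. The module axioms are immediate. The coalgebra structure on $D\tens_K C$ needed for the comodule axioms is $\Delta(d\tens_K c)=(d_{(1)}\tens_K c_{(1)})\otimes (d_{(2)}\tens_K c_{(2)})$, which descends from $D\otimes C\otimes D\otimes C$ to $(D\tens_K C)\otimes (D\tens_K C)$ thanks to the module-coalgebra compatibilities for $C$ and $D$; coassociativity and counitality of $\rho^r$ then follow directly from the corresponding properties of the coactions on $M$ and $N$.

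The main step is verifying the YD compatibility \eqref{eq:genYDcomp1} for $M\otimes N$ over the composite YD datum $(H,L,A\cotens^K B,D\tens_K C)$. Using the bicomodule structures on $A\cotens^K B$ and the actions on $D\tens_K C$, the right-hand side expands to $am_{[0]}\otimes b_{[0]}n_{[0]}\otimes (b_{[1]}\la n_{[1]})\tens_K m_{[1]}$ and the left-hand side to $(a_{[0]}m)_{[0]}\otimes (bn)_{[0]}\otimes (bn)_{[1]}\tens_K ((a_{[0]}m)_{[1]}\ra a_{[-1]})$. Starting from the right-hand side, I would proceed as follows: (i) apply the YD compatibility for $N$ in reverse to rewrite $b_{[0]}n_{[0]}\otimes b_{[1]}\la n_{[1]}$ as $(b_{[0]}n)_{[0]}\otimes (b_{[0]}n)_{[1]}\ra b_{[-1]}$; (ii) use the $K$-balance in $D\tens_K C$ (namely $(d\ra k)\tens_K c = d\tens_K (k\la c)$) to move $b_{[-1]}$ across to the $C$-factor; (iii) invoke the cotensor relation $a\otimes b_{[-1]}\otimes b_{[0]}=a_{[0]}\otimes a_{[1]}\otimes b$ in $A\cotens^K B$ to replace $b_{[-1]}$ by $a_{[1]}$ (and simultaneously $a$ by $a_{[0]}$ and $b_{[0]}$ by $b$); (iv) use $K$-balance once more to move $a_{[1]}$ back to the $D$-factor; (v) apply the YD compatibility for $M$ to collapse $a_{[0]}m_{[0]}\otimes a_{[1]}\la m_{[1]}$ into $(a_{[0]}m)_{[0]}\otimes (a_{[0]}m)_{[1]}\ra a_{[-1]}$. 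This yields exactly the expanded left-hand side.

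Functoriality is automatic: for morphisms $f:M\to M'$ and $g:N\to N'$ in the respective YD categories, the map $f\otimes g$ is visibly linear for both the action and the coaction defined above, since $f$ and $g$ are themselves (co)module maps, and bifunctoriality is inherited from the $k$-linear tensor. I expect that the only genuine obstacle is the YD verification, and the delicate point within it is the interplay between the $K$-balance in the target coalgebra $D\tens_K C$ and the cotensor relation in the source algebra $A\cotens^K B$; once these two moves are sequenced correctly, the remainder is mechanical bookkeeping.
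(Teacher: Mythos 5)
Your proposal is correct and follows essentially the same route as the paper: identical action and coaction on $M\otimes N$, with the YD compatibility \eqref{eq:genYDcomp1} verified by chaining the compatibility for $N$, the $K$-balancedness of $D\tens_K C$, the cotensor relation in $A\cotens^K B$, and the compatibility for $M$ --- the paper merely runs this chain from left to right where you run it from right to left. The only quibble is that your step (iv) is superfluous (indeed counterproductive): after step (iii) the term $a_{[1]}\la m_{[1]}$ already sits in the $C$-factor adjacent to $a_{[0]}m_{[0]}$, so the YD compatibility for $M$ applies directly, whereas moving $a_{[1]}$ onto the $D$-factor first would separate it from $m_{[1]}$ and block that application.
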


\begin{proof}
Let $M \in {}_A\YD^C(H,K)$ and $N\in {}_B\YD^D(K,L)$. We will show that 
the identity \eqref{eq:genYDcomp1} holds for all $a\tens b \in A\cotens^K B$ and $m\tens n \in M\tens N$, so that we can conclude that
$M\tens N$ is in ${}_{A\cotens^K B}\YD^{D\tens_K C}(H, L)$. We note that $M\tens N$ is a right-comodule over $D\tens_K C$ and a left-module over $A\cotens^K B$ with the following coaction and action
\begin{align}
\cdot :& A\cotens^K B \tens M\tens N \to M\tens N, \qquad (a\tens b)(m\tens n)=am \tens bn \label{eq:calc5}\\
\rho^r:& M\tens N \to M\tens N \tens D\tens_K C, \qquad \rho^r(m\tens n)= m_{[0]}\tens n_{[0]}\tens n_{[1]}\tens_K m_{[1]}\label{eq:calc6}
\end{align}
for all $a\tens b \in A\cotens^K B$ and $m\tens n \in M\tens N$. We compute
\begin{align*}
& \hspace{-.5cm}\big[(a\tens b)_{[0]} (m\tens n)\big]_{[0]} \tens \big[(a\tens b)_{[0]} (m\tens n)\big]_{[1]}\ra (a\tens b)_{[-1]}\\
\stackrel{1}{=}&\big[(a_{[0]}\tens b)(m\tens n) \big]_{[0]} \tens \big[(a_{[0]}\tens b)(m\tens n) \big]_{[1]}\ra a_{[-1]}
\stackrel{2}{=} (a_{[0]}m\tens bn)_{[0]} \tens (a_{[0]}m\tens bn)_{[1]} \ra a_{[-1]}\\
\stackrel{3}{=}& (a_{[0]}m)_{[0]} \tens (bn)_{[0]} \tens (bn)_{[1]} \tens_K \big((a_{[0]}m)_{[1]} \ra a_{[-1]})\big)
\stackrel{4}{=} a_{[0]}m_{[0]} \tens (bn)_{[0]} \tens (bn)_{[1]} \tens_K a_{[1]}\la m_{[1]}\\
\stackrel{5}{=}& am_{[0]}\tens (b_{[0]} n)_{[0]}\tens (b_{[0]} n)_{[1]} \ra b_{[-1]} \tens_K m_{[1]}
\stackrel{6}{=} am_{[0]} \tens b_{[0]}m_{[0]} \tens b_{[1]}\la n_{[1]} \tens_K m_{[1]}\\
\stackrel{7}{=}& (a\tens b)_{[0]}(m\tens n)_{[0]} \tens (a\tens b)_{[1]} \la (m\tens n)_{[1]}.
\end{align*}
We use the right coaction of $L$ on $A\cotens^K B$ as defined in \eqref{eq:calc1} for the first equality, the left action of $A\boxtimes^K B$ on $M\tens N$ as defined in \eqref{eq:calc5} for the second equality, and the right coaction of $D\tens_K C$ on $M\tens N$ as defined in \eqref{eq:calc6} followed by the right $H$ action on $D\tens_K K$ as defined in \eqref{eq:calc4} for the third equality. We then use \eqref{eq:genYDcomp1} applied on the element $am$ in the YD module $M$ to get the fourth equality. Then by using the identity $a_{[0]}\tens a_{[1]}\tens b = a\tens b_{[-1]}\tens b_{[0]}$ (because $a\tens b \in A\cotens^K B$), in combination with the $K$-balancedness of the last tensor product, we obtain the fifth equality. The sixth equality follows by applying \eqref{eq:genYDcomp1} to the element $bn$ in the YD module $N$. Finally, the last equality follows again by the definitions of the various actions and coactions as defined in \eqref{eq:calc5}, \eqref{eq:calc3}, \eqref{eq:calc2} and \eqref{eq:calc6}.
\end{proof}

Proposition \ref{prop:MonoidalGenYD} above leads in particular to bi-functors
\begin{eqnarray*}
{}_H\YD^H \times {}_A\YD^C(H,K) &\to& {}_{H\cotens^H A}\YD^{C\tens_H H}(H, K) \cong  {}_A\YD^C(H, K)\\
{}_A\YD^C(H, K) \times {}_K\YD^K &\to& {}_{A\cotens^K K}\YD^{K\tens_K C}(H, K) \cong {}_A\YD^C(H, K).
\end{eqnarray*}
By means of this functors, ${}_A\YD^C(H,K)$ obtains the structure of a $({}_H\YD^H,  {}_K\YD^K$)--biactegory, which also follows from Lemma~\ref{biactlift} in combination with Theorem~\ref{thm:mainthm}. In particular it encodes the monoidal structure on usual YD modules, and the actegory structure on AYD modules. Our next aim is to understand its braiding.

\begin{lemma}\label{generalbetaproperties}
For any YD-datum $(H,K,A,C)$, any Yetter-Drinfeld module $M\in {_A\YD^C}(H,K)$ and any left $H$-module $V\in {_H\Mod}$, the map 
\[\beta_{V,M}: V\tens M \to M \tens (C\tens_H V), \quad \beta_{V,M}(v\tens m) = m_{[0]}\tens m_{[1]}\tens_H v\]
is left $A$-linear.

Furthermore, for any additional YD-datum $(K,L, A', C')$, any YD-module $N\in {_{A'}\YD^{C'}(K,L)}$ and any left $H$-module $U\in {_H\Mod}$, we have the following identities
\begin{eqnarray}
(\Id_M\ot \psi^C_{V,M})\circ \beta_{U\ot V,M}&=&(\beta_{U,M}\ot \Id_{C\ot_H V})\circ (\Id_U\ot \beta_{V,M})
\label{coherencebeta2}\\
\beta_{V,M\ot N}&=&(\Id_M\ot \beta_{C\ot_H V,N})\circ (\beta_{V,M}\ot \Id_N) \label{coherencebeta1}
\end{eqnarray}

\[\xymatrixcolsep{5pc}\xymatrix{
&  M\ot C\ot_H(U\ot V) \ar[dr]^-{\Id_M\ot \psi^C_{V,M}} \\
U\ot V \ot M \ar[dr]_-{\Id_U\ot \beta_{V,M}} \ar[ur]^-{\beta_{U\ot V,M}} && M\ot C\ot_H U \ot C\ot_H V\\
& U\ot M\ot C\ot_H V \ar[ur]_-{{}\hspace{.8cm}\beta_{U,M}\ot \Id_{C\ot_H V}}
}\]

\[\xymatrixcolsep{5pc}\xymatrix{
V\tens M \tens N \ar[dr]_-{\beta_{V,M}\tens \Id_N} \ar[rr]^-{\beta_{V, M\tens N}} &&  M\tens N \tens C'\tens_K C \tens_H V\\
& M\tens C\tens_H V \tens N \ar[ur]_-{{}\hspace{.8cm}\Id_M\tens \beta_{C\tens_H V, N}} \\
}
\]
\end{lemma}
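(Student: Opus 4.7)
The plan is to verify the three assertions by direct element-wise computations, using only the defining formula for $\beta_{V,M}$, the various $A$-actions and $C$-coactions induced by the biactegory structure on ${}_A\Mod$ and on tensor products of YD modules, and the generalised Yetter--Drinfeld compatibility \eqref{eq:genYDcomp1}. For the $A$-linearity, the $A$-action on $V\ot M$ is $a\cdot(v\ot m)=a_{[-1]}\la v\ot a_{[0]}m$ and the one on $M\ot C\ot_H V$ is $a\cdot(m\ot c\ot_H v)=a_{[0]}m\ot(a_{[1]}\la c)\ot_H v$. Applying $\beta_{V,M}$ to $a\cdot(v\ot m)$ yields $(a_{[0]}m)_{[0]}\ot(a_{[0]}m)_{[1]}\ot_H(a_{[-1]}\la v)$, which by balancedness of $\ot_H$ equals $(a_{[0]}m)_{[0]}\ot\bigl((a_{[0]}m)_{[1]}\ra a_{[-1]}\bigr)\ot_H v$. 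Identity \eqref{eq:genYDcomp1} then converts the first two factors into $a_{[0]}m_{[0]}\ot a_{[1]}\la m_{[1]}$, giving exactly $a\cdot\beta_{V,M}(v\ot m)$.

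The identity \eqref{coherencebeta2} is simply the half-braid axiom \eqref{eq:halfbraid} for the map $\beta_{V,M}$, which coincides with the half-braiding $\beta^M_V$ of \eqref{defbeta} constructed in the proof of Theorem~\ref{thm:mainthm}. That proof already verified the required identity using the explicit op-monoidal structure $\psi_{U,V}$ of \eqref{Eopmonoidal} together with coassociativity of the $C$-coaction on $M$, so I simply invoke it. For \eqref{coherencebeta1}, I rely on Proposition~\ref{prop:MonoidalGenYD}: the $C'\ot_K C$-coaction on $M\ot N$ reads $\rho^r(m\ot n)=m_{[0]}\ot n_{[0]}\ot n_{[1]}\ot_K m_{[1]}$, so the left-hand side evaluated on $v\ot m\ot n$ gives $m_{[0]}\ot n_{[0]}\ot n_{[1]}\ot_K m_{[1]}\ot_H v$. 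Computing the right-hand side by first applying $\beta_{V,M}\ot\Id_N$ and then $\Id_M\ot\beta_{C\ot_H V,N}$ yields exactly the same expression, in one line.

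The only point requiring genuine care is the $A$-linearity, where one must convert the left $H$-action on $V$ into a right $H$-action on $C$ across the tensor $\ot_H$; the YD compatibility \eqref{eq:genYDcomp1} is precisely the relation needed to close this calculation. The two coherence identities are then essentially formal: the first reduces to an axiom already checked in the proof of Theorem~\ref{thm:mainthm}, and the second is a one-line substitution using the tensor-product coaction provided by Proposition~\ref{prop:MonoidalGenYD}. Naturality of $\beta$ in $V$ and $M$, which is implicitly used when writing the composites in \eqref{coherencebeta2} and \eqref{coherencebeta1}, is immediate from the defining formula.
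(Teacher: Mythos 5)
Your proposal is correct and follows essentially the same route as the paper: the $A$-linearity is the same direct computation (push the $H$-action across $\ot_H$ and apply \eqref{eq:genYDcomp1}), identity \eqref{coherencebeta2} is the coassociativity/half-braid verification already carried out in the proof of Theorem~\ref{thm:mainthm}, and \eqref{coherencebeta1} is the one-line substitution using the tensor-product coaction \eqref{eq:calc6} from Proposition~\ref{prop:MonoidalGenYD}.
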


\begin{proof}
The left $A$-linearity of $\beta_{V,M}$ follows from the following direct computation.
\begin{eqnarray*}
\beta_{V,M}(a\cdot (v\tens m))&=& \beta_{V,M}(a_{[-1]}\la v\tens a_{[0]}m)\\
&=& (a_{[0]}m)_{[0]} \ot (a_{[0]}m)_{[1]}\ot_H a_{[-1]}\la v \\
&=&(a_{[0]}m)_{[0]} \ot (a_{[0]}m)_{[1]}\ra a_{[-1]}\ot_H v\\
&=& a_{[0]}m_{[0]} \ot a_{[1]}\la m_{[1]} \ot _H v\\
&=& a\cdot(m_{[0]}\ot m_{[1]}\ot_H v)
\end{eqnarray*}
where we used the YD compatibility \eqref{eq:genYDcomp1} for $M$ in the fourth equality.

The identity \eqref{coherencebeta1} follows from the $C'\ot_K C$-comodule structure on $M\ot N$ as defined in equation \eqref{eq:calc6} in the proof of Proposition \ref{prop:MonoidalGenYD}.

The identity \eqref{coherencebeta2} follows from the mixed coassociativity for the $C$-comodule $M$.
\end{proof}

Consider the morphism $\beta_{V,M}$ defined in Lemma~\ref{generalbetaproperties}, and suppose moreover that $V$ is a YD-module over $H$. Then the domain of $\beta_{V,M}$ is again a generalized YD-module, and it would be natural to expect that $\beta$ becomes a morphism of YD-modules. However, in general one cannot expect that the codomain of $\beta_{V,M}$ is a YD-module as well. For this to happen, we would need $C\ot_H V$ to be a YD-module over $K$, so for the functor $C\ot_H-:{_H\Mod}\to {_K\Mod}$ to lift to YD-modules. For this reason, in the next Lemma, we restrict to the case where $C$ is a bi-Galois co-object, as this allows us to apply Lemma~\ref{liftingtocenter}. 

\begin{corollary}\label{GaloisCoobjectlifts}
Let $H$ and $K$ be bialgebras, $C$ a $(K,H)$-bi-Galois co-object and $D$ an $(H,H)$-bimodule coalgebra. Then there is an equivalence of categories
$$C\ot_H- : {_H\YD^D}(H,H)\to {_K\YD^{C\ot_HD\ot_H\ol C}}(K,K)$$
In particular, in case $D=H$, we obtain an equivalence of categories
$$C\ot_H- : {_H\YD^H}\to {_K\YD^K}$$
\end{corollary}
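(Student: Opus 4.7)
The plan is to combine Theorem~\ref{thm:mainthm}, which identifies generalized Yetter--Drinfeld modules with relative centers, with Lemma~\ref{liftingtocenter}, which lifts monoidal equivalences to equivalences between relative centers. First, I would set $E := C\tens_H - : {}_H\Mod \to {}_K\Mod$ and $\ol E := \ol C \tens_K - : {}_K\Mod \to {}_H\Mod$; by the bi-Galois hypothesis on $C$ and the results recalled in Section~\ref{se:Galois}, $E$ is a monoidal equivalence with quasi-inverse $\ol E$. The $(H,H)$-bimodule coalgebra $D$ corresponds to an op-monoidal endofunctor $F := D\tens_H - : {}_H\Mod \to {}_H\Mod$.

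Applying Lemma~\ref{liftingtocenter} in this setting yields an equivalence of categories
\[
\Zz^w_F({}_H\Mod) \;\xrightarrow{\;\simeq\;}\; \Zz^w_{EF\ol E}({}_K\Mod).
\]
The composite functor $EF\ol E$ sends $V \in {}_K\Mod$ to $C\tens_H D \tens_H \ol C \tens_K V$, so it is naturally isomorphic, as an op-monoidal functor, to $(C\tens_H D\tens_H \ol C)\tens_K -$, where $C\tens_H D\tens_H \ol C$ carries the $(K,K)$-bimodule coalgebra structure obtained as the tensor product of the bimodule coalgebras $C$, $D$ and $\ol C$. Combining with Theorem~\ref{thm:mainthm} applied to the YD data $(H,H,H,D)$ on the source and $(K,K,K,C\tens_H D\tens_H \ol C)$ on the target, we obtain natural identifications
\[
\Zz^w_F({}_H\Mod) \cong {}_H\YD^D(H,H), \qquad \Zz^w_{EF\ol E}({}_K\Mod) \cong {}_K\YD^{C\tens_H D\tens_H \ol C}(K,K),
\]
and chasing through the constructions shows that the resulting equivalence is (naturally isomorphic to) $C\tens_H -$ on underlying modules.

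For the specialization $D = H$, I would use the Morita map $\vee : C\tens_H \ol C \to K$ from equation~\eqref{vee}. Since the Morita context associated to the bi-Galois co-object $C$ is strict (as recalled in Section~\ref{se:Galois}), $\vee$ is a bijective coalgebra morphism; by construction it is also $K$-bilinear. Hence $C\tens_H H\tens_H \ol C \cong C\tens_H \ol C \cong K$ as $(K,K)$-bimodule coalgebras, which identifies ${}_K\YD^{C\tens_H H\tens_H \ol C}(K,K)$ with ${}_K\YD^K$ and yields the final equivalence.

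The routine pieces are the calculations hidden in Lemma~\ref{liftingtocenter} (standard transfer of structure along a monoidal equivalence) and in Theorem~\ref{thm:mainthm}. The main step requiring care is the identification of the op-monoidal structure on the composite $EF\ol E$ with the natural bimodule coalgebra structure on $C\tens_H D\tens_H \ol C$: one has to verify that the op-monoidal structure induced by the monoidal equivalence $E$ (and its inverse $\ol E$) combines correctly with the coproduct of $D$ to produce the coproduct of $C\tens_H D\tens_H \ol C$ as a bimodule coalgebra, and similarly for counits. This is essentially bookkeeping once the canonical isomorphisms $(C\tens_H V)\tens (C\tens_H W) \cong C\tens_H(V\tens W)$ induced by $\psi$ from~\eqref{Eopmonoidal} and the analogous isomorphisms for $\ol C$ are unpacked, but it is the one place where all the coalgebra structures interact simultaneously.
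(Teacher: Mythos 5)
Your proposal is correct and follows essentially the same route as the paper: the paper also deduces the corollary from Lemma~\ref{liftingtocenter} combined with the identification of generalized YD modules as relative centers (citing Theorem~\ref{YDiscenter}, which contains the statement of Theorem~\ref{thm:mainthm}), and then handles $D=H$ via $C\ot_H\ol C\cong K$. The only difference is that the paper supplements the abstract argument with the explicit transferred coaction $\rho(c\ot_H v)=(c_{(2)}\ot_H v_{[0]})\ot c_{(3)}\ot_H v_{[1]}\ot_H \ol\sigma(c_{(1)})$ and a direct check of the YD compatibility, which is exactly the bookkeeping step you flagged.
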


\begin{proof}
This is a direct consequence of Lemma~\ref{liftingtocenter} in combination with Theorem~\ref{YDiscenter}, taking into account that a $(K,H)$-bi-Galois co-object $C$ induces a monoidal equivalence $C\ot_H-:{_H\Mod}\to {_K\Mod}$, as recalled at the beginning of the previous subsection.

More explicitly, take $V\in {_H\YD^D}(H,H)$. Then the left $K$-module $C\ot_H V$ can be endowed with a right $C\ot_HD\ot_H\ol C$-comodule structure via
\begin{equation}\label{coactionliftYD}
\rho(c\ot_H v)=(c_{(2)}\ot_H v_{[0]})\ot c_{(3)}\ot_H v_{[1]}\ot_H \ol\sigma(c_{(1)}),
\end{equation}
where $\ol\sigma:C\to \ol C$ is the coalgebra map defined in \eqref{sigma}.
Let us check that the YD compatibility indeed holds. For any $k\in K$ and $c\ot_H v\in C\ot_H V$, we find
\begin{eqnarray*}
%\rho(k.(c\ot_H v))&=&
\rho(k\la c\ot_H v))
&=&(k_{(2)}\la c_{(2)}\ot_H v_{[0]})\ot k_{(3)}\la c_{(3)}\ot_H v_{[1]}\ot_H \ol\sigma(k_{(1)}\la c_{(1)})\\
&=&(k_{(2)}\la c_{(2)}\ot_H v_{[0]})\ot k_{(3)}\la c_{(3)}\ot_H v_{[1]}\ot_H \ol\sigma(c_{(1)})\bra S^{-1}_K(k_{(1)}),
\end{eqnarray*}
where we used \eqref{sigmalinear} in the second equality. This shows that \eqref{eq:genYDcomp2} is indeed satisfied.
\end{proof}

\begin{remark}
It is worth to note that the condition that $C$ is a bi-Galois co-object, is used for the functor $E\simeq C\ot_H-$ to lift to Yetter Drinfeld modules via Corollary~\ref{GaloisCoobjectlifts}. However, the Galois condition is sufficient but not necessary for such a lifting to exist. The sufficient and necessary condition, is the existence of a ``YD-entwining map'' $\alpha: C\ot H\to K\ot C$, satifsying suitable conditions which can be found in \cite{AV}. The above corollary holds as well if we reduce the Galois condition on $C$ by the condition that a YD entwining map exists. However, because of a lack of examples we avoid the discussion of YD entwining maps here, and restrict ourselves to bi-Galois co-objects.
\end{remark}

We are now ready to state and prove the braided structure on the category of generalized Yetter-Drinfeld modules. We can apply Theorem \ref{thm:centerbiact} directly to our current setting but in fact, we have even a more general statement.

\begin{theorem}\label{thm:generalbraid}
Let $(H,K,A,C)$ and $(H,H,H,D)$ be YD-data, where $C$ is a bi-Galois co-object. Then for any Yetter-Drinfeld modules $M\in {_A\YD^C}(H,K)$ and $V\in {_H\YD^D(H,H)}$,
the map $\beta_{V,M}$ defined in Lemma~\ref{generalbetaproperties} is also right $C\ot_HD$-colinear. The situation is summarized in the following diagram
\[
\xymatrix{
{_H\YD^D}(H,H) \quad \times \ar[ddr]_(.30){C\ot_H-} & \hspace{-1cm} {_A \YD^C}(H,K)  \ar[dr]^\ot \ar[ddl]^(.30){\Id} \\
& \qquad\qquad \stackrel{\beta}{\Rightarrow}& {_A\YD^{C\ot_H D}}(H,K) \\
{_A \YD^C}(H,K) \quad \times & \hspace{-1cm} {_K\YD^{C\ot_H D\ot_H \ol C}}(K,K)\ar[ur]_\ot
}\]

In particular, ${_A\YD^C}(H,K)$ is a $C\ot_H-%:{_H\YD^H}\to {_K\YD^K}
$-braided $({_H\YD^H}, {_K\YD^K})$-biactegory.
\end{theorem}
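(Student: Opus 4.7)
My plan is to deduce the theorem from the general machinery already developed, combined with one explicit verification. The statement has two parts: (a) the map $\beta_{V,M}$ of Lemma~\ref{generalbetaproperties} is right $C\ot_H D$-colinear when $V\in {_H\YD^D}(H,H)$, and (b) equipping ${_A\YD^C}(H,K)$ with the braiding $\beta$ turns it into a $C\ot_H-$-braided $({_H\YD^H},{_K\YD^K})$-biactegory. I will deal with (b) abstractly and with (a) by direct computation, the latter being the technical heart of the argument.

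For (b), first apply Theorem~\ref{thm:mainthm} to identify ${_A\YD^C}(H,K)$ with $\Zz^w_{C\ot_H-}({_A\Mod})$, and recall from Section~\ref{sec:centerYD} the classical braided monoidal isomorphisms $\Zz^w({_H\Mod})\cong {_H\YD^H}$ and $\Zz^w({_K\Mod})\cong {_K\YD^K}$. Since $C$ is a $(K,H)$-bi-Galois co-object, the functor $E=C\ot_H-$ is a monoidal equivalence, so Lemma~\ref{liftingtocenter} (applied with $F=\Id_{_H\Mod}$, using $E\ol E\simeq\Id$) supplies the braided op-monoidal lift needed for the hypothesis of Theorem~\ref{thm:centerbiact}. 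That theorem then yields the $E$-prebraided $(\Zz^w({_H\Mod}),\Zz^w({_K\Mod}))$-biactegory structure on $\Zz^w_E({_A\Mod})$; the biactegory structure is induced from that on ${_A\Mod}$ via Lemma~\ref{biactlift}, and under the identifications above coincides with the tensor products described in Proposition~\ref{prop:MonoidalGenYD}. Since $C$ is bi-Galois, an earlier theorem of Section~\ref{se:Galois} ensures that the lax and strong $E$-centers agree, so the prebraiding is in fact a braiding. Finally, tracing the half-braid from Theorem~\ref{thm:mainthm} shows that the braiding coming from Theorem~\ref{thm:centerbiact} is exactly the formula $\beta_{V,M}(v\ot m)=m_{[0]}\ot m_{[1]}\ot_H v$ of Lemma~\ref{generalbetaproperties}.

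For (a), I would compute directly. The codomain $M\ot (C\ot_H V)$ is an object of ${_A\YD^{(C\ot_HD\ot_H\ol C)\ot_K C}}(H,K)$, using Proposition~\ref{prop:MonoidalGenYD} applied to $M\in{_A\YD^C}(H,K)$ and the lifted YD module $C\ot_H V\in {_K\YD^{C\ot_H D\ot_H\ol C}}(K,K)$ from Corollary~\ref{GaloisCoobjectlifts}, whose coaction is given by \eqref{coactionliftYD}. The canonical identification $(C\ot_HD\ot_H\ol C)\ot_K C\cong C\ot_H D$, obtained by collapsing $\ol C\ot_K C\to H$ via the Morita map $\wedge$ of \eqref{wedge}, transports this coaction to a $C\ot_H D$-coaction. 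I would then expand both $(\beta_{V,M}\ot\Id_{C\ot_H D})\circ\rho^r_{V\ot M}$ and $\rho^r_{M\ot(C\ot_HV)}\circ\beta_{V,M}$ and compare: after applying coassociativity of the $C$-coaction on $M$ and the definition of $\rho$ on $C\ot_H V$, the comparison reduces to the Morita identity \eqref{sigmaantipode}, i.e.\ $\ol\sigma(c_{(2)})\wedge c_{(1)}=\epsilon_C(c)1_H$, which cancels the $\ol\sigma(c_{(1)})$ factor against a duplicated $m_{[1]}$ arising from coassociativity. The $A$-linearity and the two coherence hexagons are already provided by Lemma~\ref{generalbetaproperties}.

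The principal obstacle is precisely the bookkeeping in (a): one must correctly identify the $C\ot_H D$-coaction on $M\ot(C\ot_HV)$ through the Galois identification, and then see the cancellation produced by \eqref{sigmaantipode} and \eqref{wedgeveeass}. Once this colinearity is in hand, the rest of the statement is a formal consequence of Theorem~\ref{thm:centerbiact}, Lemma~\ref{liftingtocenter}, and Theorem~\ref{thm:mainthm}, together with the classical identification of ${_H\YD^H}$ with the center of ${_H\Mod}$.
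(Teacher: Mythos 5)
Your proposal is correct, and on the technical heart of the theorem it coincides with the paper's argument: part (a) is exactly the paper's proof, namely identifying the codomain $M\ot(C\ot_H V)$ as an object of ${_A\YD^{(C\ot_HD\ot_H\ol C)\ot_K C}}(H,K)\cong{_A\YD^{C\ot_HD}}(H,K)$ via the Morita map $\wedge$, writing out the coaction \equref{coactionliftYD} on $C\ot_H V$, and collapsing the resulting $\ol\sigma(m\t)\wedge m\o$ factor by \equref{sigmaantipode}. Where you diverge is part (b): you derive the pre-braiding axioms abstractly from \thref{centerbiact}, \leref{liftingtocenter} and the lax-equals-strong-center theorem of \seref{Galois}, whereas the paper verifies them concretely from the identities \equref{coherencebeta1} and \equref{coherencebeta2} of \leref{generalbetaproperties} by suitable specializations of $(M,N)$. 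Both routes are legitimate --- the paper itself remarks just before the theorem that ``we can apply Theorem~\ref{th:centerbiact} directly to our current setting'' --- and your abstract route has the advantage of recycling the categorical machinery wholesale, at the cost of only yielding the $D=H$ specialization; the paper's concrete route is what lets the statement be formulated for an arbitrary bimodule coalgebra $D$, which is why the authors prefer it. Since you supply the general-$D$ colinearity separately in part (a), your proposal does cover the full statement; the only point worth tightening is that \leref{liftingtocenter} as stated produces an equivalence of centers but leaves the verification that this lift is \emph{braided} op-monoidal (as required by the hypothesis of \thref{centerbiact}) to the reader, so you should flag that check rather than treat it as automatic.
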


\begin{proof}
First remark that by Proposition \ref{prop:MonoidalGenYD}, the tensor product of an object of ${_A\YD^C}(H,K)$ and an object of ${_K\YD^{C\ot_HD\ot_H\ol C}}(K,K)$ lives in the category ${_A\YD^{C\ot_HD}}(H,K)$ because of the isomorphism $\wedge:\ol C\ot_K C \to H$. Furthermore, recall the formula for the right $(C\ot_H D\ot_H \ol C)$-coaction of $C\ot_H V$ from \eqref{coactionliftYD}.
Having all this in mind, let us check that $\beta_{V,M}:V\ot M\to M\ot C\ot_H V$ is right $C\ot_H D$-colinear:
\begin{eqnarray*}
\rho_{M\ot C\ot_H V}(m_{[0]}\ot m_{[1]}\ot_H v) &=&
m_{[0]} \ot m_{[3]}\ot_H v_{[0]} \ot m_{[4]} \ot_H v_{[1]} \ra (\ol\sigma(m_{[2]})\wedge m_{[1]})\\
&=& m_{[0]}\ot m_{[1]}\ot_H v_{[0]} \ot m_{[2]}\ot_H v_{[1]}\\
&=&\beta_{V,M}(v_{[0]}\ot m_{[0]})\ot m_{[1]}\ot_H v_{[1]}
\end{eqnarray*}
where we used \eqref{sigmaantipode} in the second equality.

%{\joost [Fromulation of the remainder of the proof is not yet optimal]}
Specializing to the case $D=H$, we find that $C\ot_H H\ot_H \ol C\cong C\ot_H\ol C\cong K$, so that the image of the functor $C\ot_H-$ lies in ${_K\YD^K}(K,K)$. By \eqref{coherencebeta2}, we have that the axiom \eqref{prebraidedact1} is satisfied. Furthermore, by choosing $(M,N)$ as $(V,M)$ in \eqref{coherencebeta1} we recover the left hand side of axiom \eqref{prebraidedact2}. Similarly, by choosing $(M,N)$ as $(M,X)$ in \eqref{coherencebeta1} we recover the right hand side of axiom \eqref{prebraidedact2}. We conclude that
\begin{eqnarray*}
    \beta_{U,V\tens M} &=& (\Id_V \tens \beta_{C\tens_H U, M})(\beta_{U,V}\tens \Id_M)\\
    (\Id_M \tens \psi_{U,V})\beta_{U\tens V, M} &=&(\beta_{U,M}\tens \Id_{C\tens_H V})(\Id_U \tens \beta_{V,M})\\
    \beta_{V,M\tens X}&=& (\Id_M \tens \beta_{C\tens_H V,X})(\beta_{V,M}\tens \Id_X)
\end{eqnarray*}
for any $U\in {}_H\YD^H$ and $X\in {}_K\YD^K$, making it a $C\tens_H-$-pre-braiding.
\end{proof}

Let us finish this section by stating a summary of the dual results. 

\begin{theorem}\label{thm:generalbraiddual}
Let $(A,C,H,K)$ and $(B,H,H,H)$ be YD-data, $M\in {_A\YD^C}(H,K)$ a Yetter-Drinfeld module and $V\in \Mod^H$ any comodule.
\begin{enumerate}
\item The map $\beta_{M,V}:M\ot (V\cotens^HA)\to V\ot M,\ \beta_{V,M}(m\ot v\ot a)=v\ot am$ is right $C$-colinear and satisfies braid relations similar to those of Lemma~\ref{generalbetaproperties}.
\item If moreover $A$ is a bi-Galois object, then there is an equivalence of categories $-\cotens^H A:{_B\YD^H}(H,H)\to {_{\ol A\cotens^H B\cotens^H A}\YD^K}(K,K)$.
\item If moreover $V\in {_B\YD^H(H,H)}$ is a Yetter Drinfeld module, then $\beta_{M,V}$ is also left $B\ot A$-linear and hence a morphism in ${_{B\cotens^H A}\YD^C}(H,K)$.
\item Knowing that $-\cotens^H A$ is an equivalence, we can take $W=V\cotens^HA$ and $B'=\ol A\cotens^H B\cotens^H A$ and can rewrite $\beta$ in an equivalent way as a map $\beta'_{M,W}:M\ot W\to W\cotens^H\ol A\ot M$ in ${_{B'\cotens^H \ol A}\YD^C}(H,K)$. This situation is summarized in the next diagram.
\end{enumerate}
\[
\xymatrix{
{_A\YD^C}(H,K) \quad \times \ar[ddr]_(.30){\Id} & \hspace{-1cm} {_{B'} \YD^K}(K,K)  \ar[dr]^\ot \ar[ddl]^(.30){-\cotens^K \ol A} \\
& \qquad\qquad \stackrel{\beta'}{\Rightarrow}& {_{A\cotens^H B'}\YD^{C}}(H,K) \\
{_{A\cotens^K B'\cotens^K \ol A} \YD^H}(H,H) \quad \times & \hspace{-1cm} {_A\YD^C}(H,K)\ar[ur]_\ot
}\]
In particular, ${_A\YD^C}(H,K)$ is a $-\cotens^HA$-braided $({_H\YD^H}, {_K\YD^K})$-biactegory.
\end{theorem}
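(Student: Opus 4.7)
The plan is to establish the four parts of the theorem by dualizing the arguments from Theorem~\ref{thm:generalbraid} and Lemma~\ref{generalbetaproperties}, working in the dual center datum described in Theorem~\ref{YDiscenter}. Throughout, the role previously played by the right $C$-coaction on $M$ will be replaced by the left $A$-action on $M$, while the roles of $H$-module and $H$-comodule structures on $V$ are exchanged.

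For part (1), I would first verify that $\beta_{M,V}(m\ot v\ot a)=v\ot am$ is well-defined on the cotensor product $V\cotens^HA$ (using the $H$-bicolinearity of the defining map) and check right $C$-colinearity. The $C$-coaction on both sides is induced by the $C$-coaction on $M$; hence $C$-colinearity follows from a direct computation using the generalized YD compatibility \eqref{eq:genYDcomp1} applied to $am$, combined with the cotensor balancing $a_{[0]}\ot a_{[1]}\ot v=a\ot v_{[-1]}\ot v_{[0]}$ (where on the left we use the right $K$-coaction of $A$ and on the right the left $H$-coaction of $A$ balanced against $V$). The braid-type relations analogous to \eqref{coherencebeta2} and \eqref{coherencebeta1} then follow routinely from the coassociativity of the $C$-coaction on $M$ and the bimodule-coalgebra structure of $C$, mirroring the original lemma.

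For part (2), I invoke Lemma~\ref{liftingtocenter} with the monoidal equivalence $E=-\cotens^HA:\Mod^H\to\Mod^K$ (which is an equivalence precisely because $A$ is bi-Galois) and with $F$ the endofunctor of $\Mod^H$ determined by the bimodule coalgebra structure needed to identify ${}_B\YD^H(H,H)$ with a relative center via Theorem~\ref{YDiscenter}. Composition with the quasi-inverse $-\cotens^K\ol A$ produces $EF\ol E=-\cotens^K\ol A\cotens^H B\cotens^H A$ on the dual side, yielding the claimed equivalence ${}_B\YD^H(H,H)\to {}_{\ol A\cotens^H B\cotens^H A}\YD^K(K,K)$. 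The lifted YD structure on $V\cotens^HA$ is given by a formula analogous to \eqref{coactionliftYD}, but now dualized: a left $\ol A\cotens^H B\cotens^H A$-action implemented through the $B$-action on $V$, conjugated by means of a map $\ol\sigma_A:A\to\ol A$ dual to \eqref{sigma}, together with the Morita-type maps $\wedge_A,\vee_A$ for the bi-Galois object $A$.

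For part (3), the left $B\cotens^HA$-linearity of $\beta_{M,V}$ follows from a direct computation: for $b\ot a'\in B\cotens^HA$, the element acts diagonally, with $b$ acting on $V$ via the $B$-module structure of the YD module $V\in{_B\YD^H}(H,H)$ and $a'$ acting on $M$; compatibility is enforced exactly by the YD condition of $V$ (mirroring how $A$-linearity was obtained from the YD condition of $M$ in Lemma~\ref{generalbetaproperties}). Part (4) is then a formal rewriting: applying the equivalence $-\cotens^KA$ of part (2) to the half-braid $\beta_{M,V}$ and setting $W=V\cotens^HA$, $B'=\ol A\cotens^HB\cotens^HA$ transports $\beta$ to a morphism $\beta'_{M,W}:M\ot W\to W\cotens^H\ol A\ot M$ belonging to ${_{B'\cotens^H\ol A}\YD^C}(H,K)$, by naturality of the transfer-of-structure construction. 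Finally, to conclude that ${_A\YD^C}(H,K)$ is a $-\cotens^HA$-braided $({_H\YD^H},{_K\YD^K})$-biactegory, I specialize to $B=H$ (so $B\cotens^HA\cong A$ and $\ol A\cotens^HB\cotens^HA\cong K$), and check the three hexagon/heptagon axioms of Definition~\ref{def:E-prebraided} directly from the braid identities established in part (1).

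The main obstacle will be carefully tracking the coinvariant/cotensor manipulations, in particular verifying that $\beta_{M,V}$ is well-defined on $M\ot(V\cotens^HA)$ and lands in $V\ot M$ with the correct half-braid structure; and identifying the dualized versions of \eqref{wedge}, \eqref{vee} and $\ol\sigma$ in \eqref{sigma} for the bi-Galois object $A$ (rather than the bi-Galois co-object $C$), together with the analogs of the identities \eqref{wedgeveeepsilon}--\eqref{sigmalinear}, which are needed both to write down the YD structure on $V\cotens^HA$ and to verify the colinearity/linearity claims. Once these dualized structural tools are in place, the remainder of the proof proceeds by the same pattern as Theorem~\ref{thm:generalbraid}.
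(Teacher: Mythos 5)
Your proposal is correct and follows exactly the route the paper intends: Theorem~\ref{thm:generalbraiddual} is stated there without proof, as a formal dualization of Lemma~\ref{generalbetaproperties}, Corollary~\ref{GaloisCoobjectlifts} and Theorem~\ref{thm:generalbraid} (via the dual center datum of Theorem~\ref{YDiscenter} and the transfer of structure of Lemma~\ref{liftingtocenter}), which is precisely the strategy you lay out. The only cosmetic slip is in part (2), where the endofunctor $F$ is determined by the bicomodule algebra $B$ (giving $F=-\cotens^H B$) rather than by a bimodule coalgebra; otherwise your identification of the dualized Morita maps and the analogue of $\ol\sigma$ for the bi-Galois object $A$ as the needed ingredients is exactly right.
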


\section{Bicategorical perspective: double groupoid-crossed bicategories}\label{se:bicats}

The aim of this section is to provide a bicategorical interpretation of the results of the previous session and to establish a link with crossed categorical structures in the sense of Turaev.

\subsection{Turaev's group-crossed braided monoidal categories}

Let us start by recalling the following notion, which appeared first in \cite{TuraevArxiv}, see also \cite{TuraevBook} for the published version.

Let $G$ be a group. A {\em right $G$-crossed braided monoidal category} is a category $\Cc$, such that
\begin{itemize}
\item $\Cc$ is $G$-graded, meaning that it decomposes as a disjoint union of full subcategories $\Cc=\bigsqcup_{g\in G} \Cc_g$.
In other words, we have an assignment $\partial:\Cc\to G$ such that, for each non-zero morphism $f:X\to Y$ in $\Cc$, we have $\partial X=\partial f =\partial Y$.
%(or $\Cc=\coprod_{g\in G} \Cc_g$), 
\item $\Cc$ has a (strict) monoidal structure that is compatible with the grading in the following way:
$$\ot: \Cc_g \times \Cc_h \to \Cc_{gh},$$
stated otherwise
$$\partial(X\ot Y)=\partial X\partial Y.$$
This means that $\partial$ is a monoidal functor, where $G$ is regarded as a discrete monoidal category.
\item $G$ acts (from the right) on $\Cc$ by means of monoidal automorphisms. In other words, there is a group morphism $\phi:G^{op}\to \Aut^{\ot}(\Cc),\ h\mapsto \phi(h)=(-)^h$, where $\Aut^{\ot}$ denotes the group of strict monoidal automorphisms of $\Cc$. This means that for all $X,Y$ in $\Cc$, and $h,g\in G$ we have identities
$$(X\ot Y)^h=X^h\ot Y^h, \qquad (X^h)^g=X^{hg}.$$
\item
The morphism $\phi$ is compatible with the grading in the sense that $\partial(X^h)=h^{-1}\partial(X)h$ for all $h\in G$ and $X\in \Cc$, in other words for all $g,h\in G$, we have
$$(-)^h : \Cc_g\to \Cc_{h^{-1}gh}.$$
\item There is braiding of the following kind:
For $X\in \Cc_g$ and $Y\in \Cc_h$, we have
$$\beta_{X,Y} : X\ot Y \to Y \ot X^{\partial Y} \in \Cc_{gh},$$
satisfying the following expected ``hexagon''\footnote{Because we omitted the associtivity constraints, we only see 3 instead of 6 arrows in these diagrams} conditions.
\[
\xymatrix{
X\ot Y \ot Z \ar[rr]^-{\Id_X\ot \beta_{Y,Z}} \ar[d]_-{\beta_{(X\ot Y),Z}} && X\ot Z\ot Y^{\partial Z} \ar[d]^-{\beta_{X,Z}\ot \Id_{Y^{\partial Z}}}\\
Z\ot (X\ot Y)^{\partial Z} \ar@{=}[rr] && Z\ot X^{\partial Z}\ot Y^{\partial Z}
}\quad
\xymatrix{
X\ot Y \ot Z \ar[rr]^-{\beta_{X,Y}\ot \Id_Z} \ar[d]_-{\beta_{X, (Y\ot Z)}} && Y\ot X^{\partial Y}\ot Z \ar[d]^-{\Id_Y \ot \beta_{X^{\partial Y},Z}}\\
Y\ot Z\ot X^{\partial(Y\ot Z)} \ar@{=}[rr] && Y\ot Z\ot X^{\partial Y\partial Z}
}
\]
\end{itemize}
Similarly, one defines a {\em left} $G$-crossed braided monoidal category, which is again a $G$-graded monoidal category, where now $G$ acts on the left on $\Cc$ by monoidal automorphisms, via a group morphism $G\to \Aut^\ot(\Cc), h\mapsto {^h(-)}$ and where the braiding is of the form $\beta_{X,Y}: X\ot Y\to {^{\partial X}Y}\ot X$. 

\begin{example}\label{ex:Panaite}
Let $H$ be a Hopf algebra, and consider the group $G=\Aut_{\sf Hopf}(H)$ of Hopf algebra automorphisms of $H$. Denote by ${^\alpha H}$ the bicomodule algebra obtained from $H$ by twisting the left regular $H$-coaction with $\alpha$, see Section~\ref{se:alphabeta}. One can observe that ${^\alpha H}$ is in fact an $H$-bi-Galois object, and since ${^\alpha H}\cotens^H {^\beta H}\cong {^{\alpha\beta} H}$, the assignment $\Aut_{\sf Hopf}(H)\to \Gal(H,H)$, $\alpha\to {^\alpha H}$ is a group homomorphism (see e.g.\ \cite{AV} for more details). For each $\alpha\in \Aut_{\sf Hopf}(H)$, we have a category of associated Yetter Drinfeld modules ${_{^{\alpha}H}\YD^{H}}(H,H)$ and take $\YD(H)=\bigsqcup_{\alpha\in \Aut_{\sf Hopf}(H)} {_{^\alpha H}\YD^{H}}(H,H)$. Then $\YD(H)$ is graded over $\Aut_{\sf Hopf}(H)$ and the monoidal structure is compatible with the grading since we know by Proposition~\ref{prop:MonoidalGenYD}
that 
$$\ot:{_{^\alpha H}\YD^{H}}(H,H)\times {_{^{\beta}H}\YD^{H}}(H,H)\to 
{_{{^\alpha H}\cotens^H {^\beta H}}\YD^{H}}(H,H)\cong {_{^{\alpha\beta}H}\YD^{H}}(H,H)$$
Furthermore, by Theorem~\ref{thm:generalbraiddual}(2) we have for any $\alpha\in \Aut_{\sf Hopf}(H)$ a functor $\phi(\alpha)=-\cotens^H {^{\alpha^{-1}}H}\cong -\cotens^H H^\alpha: {_{^\beta H}\YD^{H}}(H,H) \to {}_{_{\alpha\beta\alpha^{-1}}H}\YD^{H}(H,H)$. This functor leaves the underlying $k$-module unchanged and twists the right $H$-coaction with $\alpha$. It is clearly an equivalence of categories with inverse $\phi(\alpha^{-1})$, and it preserves the $k$-linear tensor product since $H^\alpha$ is a bi-Galois object. In this way we obtain a group morphism $\phi:\Aut_{\sf Hopf}(H)\to \Aut^{\ot}$.
The braidings as described in Theorem~\ref{thm:generalbraiddual} then turn $\YD(H)$ in a left $\Aut_{\sf Hopf}(H)$-crossed braided monoidal category. 

This example essentially goes back to \cite{panaite}. Although in that paper, the authors consider a double grading of $\Aut_{\sf Hopf}(H)$, on the category obtained as the disjoint union of all categories ${_H\YD^H}(\alpha,\beta)={_{^\alpha H^\beta}\YD^{H}}(H, H)$. Recall however from Lemma~\ref{le:alphabeta} that we have isomorphisms of categories 
$${_{^{\alpha\beta^{-1}} H}\YD^{H}}(H, H)\cong {_{^\alpha H^\beta}\YD^{H}}(H, H)\cong  {_{H^{\beta\alpha^{-1}}}\YD^{H}}(H, H),$$
induced by the isomorphisms of $H$-bicomodule algebras
$$\xymatrix{{^{\alpha\beta^{-1}} H} \ar[rr]_\cong^{\beta}  && {^\alpha H^\beta} \ar[rr]_\cong^{\alpha^{-1}} &&H^{\beta\alpha^{-1}} }$$
Therefore, the $k$-linear tensor product induces a monoidal structure of the form
\begin{eqnarray*}
{_{^\alpha H^\beta}\YD^H}(H,H)\times {_{^\gamma H^\delta}\YD^H}(H,H) \cong {_{^{\alpha\beta^{-1}}H}\YD^H}(H,H)\times {_{^{\gamma\delta^{-1}} H}\YD^H}(H,H) \\
&&\hspace{-5cm}\longrightarrow {_{^{\alpha\beta^{-1}\gamma\delta^{-1}}H}\YD^H}(H,H) \cong 
{_{^{\alpha\gamma}H^{\delta\gamma^{-1}\beta\gamma}}\YD^H}(H,H)
\end{eqnarray*}
which allows to endow the category 
$$\YD(H)=\bigsqcup_{(\alpha,\beta)\in \Aut_{\sf Hopf}(H)\times \Aut_{\sf Hopf}(H)} {_H\YD^H}(\alpha,\beta)$$
with a left $G$-crossed braided monoidal structure where $G=\Aut_{\sf Hopf}(H)\times \Aut_{\sf Hopf}(H)$ is the group with composition
$$(\alpha,\beta)*(\gamma,\delta)=(\alpha\gamma,\delta\gamma^{-1}\beta\gamma).$$
As could be seen from the above, this category however contains a lot of isomorphic copies of the same structures, so we believe the approach proposed at the beginning of this example is a suitable refinement of this structure.

Furthermore, in view of what comes next, it is useful to remark how the above construction can be dualized to obtain also a {\em right} group-crossed braided monoidal structure on the same category. Indeed, recall that for any $\alpha\in \Aut_{\sf Hopf}(H)$, we can also consider the associated bi-Galois co-object ${H_\alpha}$, whose coalgebra structure is the one from $H$, the left action is the regular one, while the right action is the regular action twisted by $\alpha$. Then we can write $\YD(H)=\bigsqcup_{\alpha\in \Aut_{\sf Hopf}(H)} {_H\YD^{_{\alpha}H}}(H,H)$ which is essentially the same $\Aut_{\sf Hopf}(H)$-graded monoidal category as before. However, now for any $\alpha\in \Aut_{\sf Hopf}(H)$, we consider a functor $\phi(\alpha)={_\alpha H}\ot_H-: {_H\YD^{_{\beta}H}}(H,H) \to {}_H\YD^{_{\alpha^{-1}\beta\alpha}H}(H,H)$. This functor leaves the underlying $k$-module unchanged and twists the left $H$-action by $\alpha$.  The braidings as described in Theorem~\ref{thm:generalbraid} then turn $\YD(H)$ into a right $\Aut_{\sf Hopf}(H)$-crossed braided monoidal category.

\end{example}

\begin{remarks}
Let us comment on some variations, refinements, and generalizations of group-crossed braided monoidal categories  that have already been defined over the last years. 

Firstly, already in \cite{TuraevArxiv}, a $k$-linear and rigid version of the notion is considered, where for rigidity, it is required that the dual of an object in $\Cc_g$ belongs to $\Cc_{g^{-1}}$. Although we will omit rigidity in what follows, it can still be incorporated into our generalizations.

Secondly, because of the decomposition of $\Cc$ into a disjoint union of full subcategories, the category $\Cc$ itself fails to have certain good properties. For example, because of the lack of (non-zero) morphisms between objects of different degree, the coproduct of two objects of different degree fails to exist. An elementary way of solving this issue is to consider instead of a disjoint union of subcategories $\Cc_g$,  a category $\Cc$ whose objects themselves are families $(X_g)_{g\in G}$, where $X_g\in \Cc_g$ for any $g\in G$, and morphisms are families of morphisms in each degree. In that way the coproduct of objects $X_g\in \Cc_g$ and $Y_h\in \Cc_h$ would be the family having $X_g$ in component $g$, $Y_h$ in component $h$, and zero everywhere else. This construction boils down to freely completing $\Cc$ with coproducts of objects of different degree.

More recently, S\"ozer and Virelizier \cite{Vir} provided a generalization of $G$-crossed braided monoidal categories, where the group $G$ is replaced with a crossed module. In this setting, a more refined way than a disjoint union is considered to treat the grading on the category. 

In what follows we aim to generalize the notion of crossed braided monoidal category in three ways. Firstly, we want to replace the group of strict monoidal automorphisms by the group of strong monoidal autoequivalences. In the setting of Example~\ref{ex:Panaite}, this means that we would replace Hopf algebra automorphisms by general bi-Galois (co-)objects.

Our second aim is to provide a bicategorical version of crossed braided monoidal categories. In \cite{Femic} an alternative bicategorical version is introduced, however our notion is quite different and more suited to establish the link with generalized Yetter-Drinfeld modules treated above.

Finally, because we aim to treat bi-Galois objects and co-objects on an equal footing, we will consider a double grading on our bicategory, one inducing a left crossing, the other a right crossing.
\end{remarks}

\subsection{Categories of bicomodule algebras and bimodule coalgebras, groupoids of bi-Galois objects and co-objects}

Recall from the beginning of section~\ref{braidingYD} that given a $(K,H)$-bimodule coalgebra $C$ and an $(L,K)$-bimodule coalgebra $D$, the balanced tensor product $D\ot_K C$ is an $(L,H)$-bimodule coalgebra, where $H$, $K$ and $L$ are any bialgebras over the same base ring $k$. Based on this construction, one can build a bicategory whose $0$-cells are bialgebras, $1$-cells are bimodule coalgebras, $2$-cells are morphisms of bimodule coalgebras and horizontal composition is given by the balanced tensor product. Furthermore, recall (see e.g.\ \cite{AV}) that when $C$ and $D$ are bi-Galois co-objects, then the tensor product $D\ot_K C$ is again a bi-Galois co-object, and furthermore all considered bialgebras $H$, $K$, $L$ are Hopf. 
By taking the truncation of this bicategory, we obtain the following definition.

\begin{definition}
The category $\BC$ is defined as the category whose objects are bialgebras, and morphisms are isomorphism classes of bimodule coalgebras, with composition given by the balanced tensor product.

The subcategory of $\BC$ consisting of those objects that are Hopf algebras and those morphisms that are bi-Galois co-objects, forms a groupoid that we denote as $\coGal$.
\end{definition}

The inverse of a morphism in $\coGal$ represented by the bi-Galois co-object $C$, is the isomorphism class represented by the bi-Galois co-object $\ol C$, as described in section \ref{se:Galois}, see also \cite{AV} for more details.

\begin{definition}
Dual to the above, we consider the category $\BA$ whose objects are bialgebras, and morphisms are isomorphism classes of $k$-flat bicomodule algebras, with composition given by the cotensor product (the $k$-flatness condition guarantees the associativity of the cotensor product). The subcategory of $\BA$ consisting of those objects that are Hopf algebras and those morphisms that are bi-Galois objects, forms a groupoid that we denote as $\Gal$.
\end{definition}

We can now combine the information of both categories and groupoids above. 

\begin{definition}
The category $\YD$ is defined as the category whose objects are bialgebras, and a morphism from a bialgebra $H$ to a bialgebra $K$ is an isomorphism class of couples $(A,C)$, where $A$ is a $k$-flat $(H,K)$-bicomodule algebra and $C$ is a $(K,H)$-bimodule coalgebra, such that $(H,K,A,C)$ is a YD datum. Composition is induced by the balanced cotensor product between bicomodule algebras and reversed balanced tensor product between bimodule coalgebras.

The subcategory of $\YD$ consisting of those objects that are Hopf algebras and those morphisms that are induced by a pair of a bi-Galois object and a bi-Galois co-object is denoted by $\biGal$.
\end{definition}

The above introduced categories and groupoids are related by obvious forgetful functors, represented by the diagonal arrows, and embedding functors, represented by the vertical arrows in the following diagram.
\[
\begin{tabular}{lr}
\begin{minipage}{6cm}
\xymatrix{
& \YD \ar[dl]  \ar[dr]\\
\BA  && \BC^{op} \\
& \biGal\ar@{^(->}[uu] \ar[dl] \ar[dr]\\
\Gal \ar@{^(->}[uu] && \coGal^{op} \ar@{^(->}[uu]
}\end{minipage} &
\begin{minipage}{6cm}
$\left.\begin{array}{c} \\ \\ \\ \\ \end{array}\right\}$ Categories \\
\vspace{.4cm} \\
$\left.\begin{array}{c} \\ \\ \\ \\ \end{array}\right\}$Groupoids
\end{minipage}
\end{tabular}
\]

\subsection{Bicategories graded by categories}
Let is introduce the following definition, as a 2-dimensional version of Turaev's group-graded categories.

\begin{definition}\label{def:gradedbicat}
Let $\sf C$ be a category and $\Bb$ be a bicategory. We say that $\Bb$ is $\sf C$-graded if there exists a $2$-functor $\partial:\Bb\to \sf C$, where $\sf C$ is considered as a discrete bicategory, i.e.\ with only identity $2$-cells.
\end{definition}

Let us observe that the existence of $\partial$ means that for any pair of zero-cells $A,B$ in $\Bb$, we can write their Hom-category as the following disjoint union: 
$$\Bb(A,B) = \bigsqcup_{g\in {\sf C}(\partial A,\partial B)} \Bb_g(A,B),$$
where $\Bb_g(A,B)$ is the inverse image of $g$ under the $2$-functor $\partial$. Furthermore, the horizontal composition in $\Bb$, which we denote as $\ot:\Bb(A,B)\times \Bb(B,C)\to \Bb(A,C)$, is compatible with the composition in $\sf C$ as follows
$$\ot: \Bb_g(A,B)\times \Bb_h(B,C) \to \Bb_{gh},$$
or otherwise stated
$$\partial(X\ot Y)=\partial X\partial Y.$$
Remark that we denoted the horizontal composition in $\Bb$ as well as the composition in the grading category $\sf C$ in the opposite way as compared to what is usually common, to make it more compatible with the compositions given by tensor products in the next examples. When $\Bb$ has one zero-cell (hence is a monoidal category) and $\sf C$ is a one-object category (hence a monoid), we recover the notion of a graded monoidal category in the sense of Turaev as described the previous subsection.

\begin{examples}\label{ex:*examples}
We denote by ${_*\Mod}$ the bicategory defined as follows:
\begin{itemize}
\item zero-cells are bialgebras;
\item a one-cell from $H$ to $K$ is a pair $(A,M)$, where $A$ is a $k$-flat $(H,K)$-bicomodule algebra and $M$ is a left $A$-module. Horizontal composition of one cells $(A,M):H\to K$ and $(B,N):K\to L$ is given by $(A\cotens^K B, M\ot N):H\to L$.
\item a two-cells from $(A,M)$ to $(B,N)$ is a pair $(\alpha,f)$ where $\alpha:A\to B$ is an $(H,K)$-bicomodule algebra isomorphism and $f:M\to N$ is $A$-linear map (where the $A$-module structure on $N$ obtained from restriction of scalars by $\alpha$).
\end{itemize}
Then ${_*\Mod}$ is a $\BA$-graded bicategory, where $\partial: {_*\Mod}\to \BA$ sends a pair $(A,M)$ to the isomorphism class of $A$. Otherwise stated, Hom-category in ${_*\Mod}$ between any pair of bialgebras $H,K$ decomposes as follows
$${_*\Mod}(H,K)= \bigsqcup_{A\in \BA(H,K)} {_A\Mod},$$
where we recall that $\BA(H,K)$ is the category of isomorphism classes of $(H,K)$-bicomodule algebras.

Dually, one defines $\Mod^*$ as the $\BC^{op}$-graded bicategory whose zero-cells are bialgebras and one-cells are comodules over bimodule coalgebras. Combining both, we obtain ${_*\YD^*}$ as the $\YD$-graded bicategory whose zero-cells are bialgebras and one-cells are generalized Yetter Drinfeld-modules over arbitrary YD data. This means that for any pair of bialgebras $(H,K)$, we have
$${_*\YD^*}(H,K) = \bigsqcup_{(A,C)~|~(H,K,A,C)\in \YD} {_A\YD^C}(H,K)$$
and horizontal composition in this bicategory is given by the $k$-linear tensor product, which is a reinterpretation of Proposition~\ref{prop:MonoidalGenYD}. 
\end{examples}

The above examples can then be summarized and related in the following diagram, where the upper diagonal arrows are the obvious forgetful $2$-functors, and the lower diagonal arrows are the obvious forgetful functor we already encountered in the previous paragraph.
\[
\begin{tabular}{lr}
\begin{minipage}{6cm}
\xymatrix{
& {_*\YD^*} \ar[dl] \ar[dd]^\partial \ar[dr]\\
{_*\Mod} \ar[dd]^\partial && \Mod^* \ar[dd]^\partial\\
& \YD \ar[dl] \ar[dr]\\
\BA && \BC^{op}
}\end{minipage} &
\begin{minipage}{6cm}
$\left.\begin{array}{c} \\ \\ \\ \\ \end{array}\right\}$ category graded bicategories \\
\vspace{.4cm} \\
$\left.\begin{array}{c} \\ \\ \\ \\ \end{array}\right\}$ grading categories
\end{minipage}
\end{tabular}
\]

\begin{examples}\label{circledexamples}
We can also consider variations of the above bicategories, whose gradings land in the groupoids of bi-Galois (co)objects. More precisely, we denote by $${_\circledast\Mod}$$ the full sub-bicategory of ${_*\Mod}$ consisting of the same zero-cells and those one-cells whose grading lands in $\Gal$. This means that a $1$-cell from $H$ to $K$ in ${_\circledast\Mod}$ is a pair $(A,M)$ where $A$ is an $(H,K)$ bi-Galois object and $M$ is a left $A$-module.

Similarly, we consider $\Mod^\circledast$ as the full sub-bicategory of $\Mod^*$ whose one-cells are graded by bi-Galois co-objects. In the same way we can consider $${_\circledast\YD^\circledast}$$ as the full sub-category of ${_*\YD^*}$ described in Examples~\ref{ex:*examples}, which is graded over $\biGal$. Finally, we also consider the categories ${_\circledast\YD^1}$ and ${_1\YD^\circledast}$, which are further restrictions of ${_\circledast\YD^\circledast}$ where we only consider those Yetter-Drinfeld data where the bimodule coalgebra, respectively the bicomodule algebra, is the trivial one. The latter means that for two different Hopf algebras $H,K$, the Hom category ${_\circledast\YD^1}(H,K)$ contains no (non-zero) objects and
$${_\circledast\YD^1}(H,H)=\bigsqcup_{A\in \Gal(H,H)}{_A\YD^H}(H,H)$$
is a $\Gal(H,H)$-graded monoidal category. 
We can again summarize the relations between these structures in the following diagram
\[
\begin{tabular}{lr}
\begin{minipage}{6cm}
\xymatrix{
& {_\circledast\YD^\circledast}  \ar[dd]^\partial \\
{_\circledast\YD^1} \ar[dd]^\partial \ar@{^(->}[ur] && {_1\YD^\circledast} \ar@{_(->}[ul] \ar[dd]^\partial\\
& \biGal \ar[dl] \ar[dr]\\
\Gal && \coGal^{op}
}\end{minipage} &
\begin{minipage}{6cm}
$\left.\begin{array}{c} \\ \\ \\ \\ \end{array}\right\}$ groupoid graded bicategories \\
\vspace{.4cm} \\
$\left.\begin{array}{c} \\ \\ \\ \\ \end{array}\right\}$ grading groupoids
\end{minipage}
\end{tabular}
\]
\end{examples}

\subsection{Groupoid-crossed bicategories}

Our next aim is to provide a 2-dimensional version of group-crossed braided monoidal categories. Clearly, in this setting we will have to consider gradings over groupoids rather than mere categories. To formulate this definition, let us first recall some definitions and introduce notation.

\begin{definition}
Let $\Cc$ and $\Dd$ be two categories. An {\em opfunctor} (also sometimes called a {\em cofunctor} or a {\em retrofunctor} in literature) $F:\Cc\to\Dd$, assigns to each object $D\in\Dd^0$ an object $F^0D\in\Cc^0$ and to each morphism $f\in\Cc(FD,FD')$ a morphism $Ff\in \Dd(D,D')$ such that $F(\Id_{FD})=\Id_D$ and $F(g\circ f)=F(g)\circ F(f)$ for all $D,D',D''\in\Dd^0$ and all $f\in\Cc(FD,FD')$, $g\in \Cc(FD',FD'')$. In particular, if $\sf G$ and $\sf H$ are groupoids, then an opfunctor ${\sf G}\to{\sf H}$ is also called a {\em groupoid opmorphism}.
\end{definition}

\begin{notation}
Let $\Bb$ be a bicategory. Then for any zero-cell $A$ of $\Bb$, the endohom category $\Bb(A,A)$ is monoidal by means of the horizontal composition of $\Bb$. We denote by $\Aut^\ot(\Bb)$ the groupoid whose objects are the zero-cells of $\Bb$ and a morphism from $A$ to $B$ is a strong monoidal equivalence $F:\Bb(A,A)\to \Bb(B,B)$, up to monoidal natural isomorphism and composition in $\Aut^\ot(\Bb)$ is induced by the usual composition of functors (hence opposed to the convention for composition in grading categories used in the previous subsection).
This notation is motivated by the fact that for any zero cell $A\in \Bb$, we have $\Aut^\ot(\Bb)(A,A)=\Aut^\ot(\Bb(A,A))$, where the latter denotes the strong monoidal autoequivalences of the monoidal category $\Bb(A,A)$. More generally, for any pair of zero cells $A,B$ in $\Bb$, we have $\Aut^\ot(\Bb)(A,B)={\sf Iso}^\ot(\Bb(A,A),\Bb(B,B))$.
\end{notation}

\begin{definition}
Let $\sf G$ be a groupoid and $\Bb$ a bicategory, whose horizontal composition we denote by $\ot$. We say that $\Bb$ is a {\em right $\sf G$-crossed braided bicategory} if it is endowed with the following structure.
\begin{itemize}
\item $\Bb$ is $\sf G$ graded via the $2$-functor $\partial:\Bb\to {\sf G}$ (see Definition~\ref{def:gradedbicat}).
\item There is an opmorphism of groupoids $\phi:{\sf G}^{op}\to \Aut^\ot(\Bb)$, where $\phi^0=\partial$. Explicitly, this means that for any pair of zero cells $A,B$ in $\Bb$, and any $g\in {\sf G}(\partial B,\partial A)$, there is a strong monoidal functor
$$\phi(g)=(-)^g:\Bb(A,A)\to \Bb(B,B),$$
such that $(X^g)^h \cong X^{gh}$ for composable morphisms $g,h\in {\sf G}$. The strong monoidality of $\phi(g)$ is expressed by natural isomorphisms
$$(X\ot Y)^g\cong X^g\ot Y^g,$$
satisfying suitable compatibility conditions.
\item The monoidal functors $\phi(g)$ respect the gradings in the sense that ${(-)}^g:\Bb_h(A,A)\to \Bb_{g^{-1}hg}(B,B)$ for all $h\in{\sf G}(\partial A,\partial A)$, or otherwise stated
$$\partial(X^g) = g^{-1}\partial X g,$$
for all $X\in \Bb(A,A)$.
\item There braiding of the following kind:
For $X\in \Bb(A,A)$ and $Y\in \Bb(A,B)$, we have
$$\beta_{X,Y} : X\ot Y \to Y \ot X^{\partial Y}$$
which is a natural isomorphism in $\Bb_{\partial X\partial Y}(A,B)$, satisfying the following heptagon\footnote{The left diagram has to be compared to the heptagon condition from \eqref{prebraidedact1}. As we omitted the associativity constraints, 3 additional arrows have been left out of the diagram.} and hexagon\footnote{The right diagram has to be compared to the two hexagon conditions of \eqref{prebraidedact2}.} conditions:
\begin{equation}\label{hexagonsbicat}
\xymatrix{
X\ot Y \ot Z \ar[rr]^-{\Id_X\ot \beta_{Y,Z}} \ar[d]_-{\beta_{(X\ot Y),Z}} && X\ot Z\ot Y^{\partial Z} \ar[d]^-{\beta_{X,Z}\ot \Id_{Y^{\partial Z}}}\\
Z\ot (X\ot Y)^{\partial Z} \ar[rr]^\cong && Z\ot X^{\partial Z}\ot Y^{\partial Z}
}\quad
\xymatrix{
X\ot Y \ot Z \ar[rr]^-{\beta_{X,Y}\ot \Id_Z} \ar[d]_-{\beta_{X, (Y\ot Z)}} && Y\ot X^{\partial Y}\ot Z \ar[d]^-{\Id_Y \ot \beta_{X^{\partial Y},Z}}\\
Y\ot Z\ot X^{\partial(Y\ot Z)} \ar@{=}[rr] && Y\ot Z\ot X^{\partial Y\partial Z}
}
\end{equation}
\end{itemize}
\end{definition}

\begin{remarks}
Remark that the term ``right'' comes from the fact that {\sf G} acts via $\phi$ from the right on $\Bb$ by monoidal equivalences between its endohom categories. Similarly, one can consider a {\em left} $\sf G$-crossed braided bicategory. In this case, $\Bb$ is again $\sf G$-graded via $\partial :\Bb\to \sf G$, but we have a groupoid opmorphism $\phi:{\sf G}\to \Aut^\ot(\Bb),\ g\mapsto \phi(g)={^g(-)}$ such that $\partial({^g X})=g\partial X g^{-1}$ and the braidings are of the type $\beta_{X,Y} : X\ot Y \to {^{\partial X}Y} \ot X$.
\end{remarks}

\begin{proposition}
The bicategory ${_\circledast\YD^1}$ from Examples~\ref{circledexamples} is a left $\Gal$-crossed braided monoidal category. The bicagegory ${_1\YD^\circledast}$ is a right $\coGal^{op}$-crossed braided monoidal category.
\end{proposition}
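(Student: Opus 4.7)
The plan is to treat the two statements in parallel, focusing on ${_\circledast\YD^1}$ since ${_1\YD^\circledast}$ follows by a strictly dual argument that I outline at the end. First I would set up the grading $2$-functor $\partial:{_\circledast\YD^1}\to\Gal$ as the identity on $0$-cells and sending a one-cell $(A,M)\in{_A\YD^H}(H,H)$ to the class $[A]\in\Gal(H,H)$. Its compatibility with horizontal composition is immediate from Proposition~\ref{prop:MonoidalGenYD}, which identifies the grading of $(A,M)\ot(B,N)$ with $[A]\cdot[B]=A\cotens^H B$.

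Next I would define the action $\phi:\Gal\to\Aut^\ot({_\circledast\YD^1})$. For each bi-Galois object $A:H\to K$, Theorem~\ref{thm:generalbraiddual}(2) provides a strong monoidal equivalence $\Bb(H,H)\to\Bb(K,K)$; I would take $\phi(A)$ to be this equivalence (or its quasi-inverse, along the lines of Example~\ref{ex:Panaite}) so that the induced grading shift $\partial Y\mapsto\ol A\cotens^H\partial Y\cotens^H A$, read as groupoid conjugation, matches the left-crossed identity $\partial({^AY})=A\,\partial Y\,A^{-1}$ in categorical composition. The opmorphism-functoriality $\phi(A_2\cdot A_1)\cong\phi(A_2)\circ\phi(A_1)$ and the strong monoidality of each $\phi(A)$ follow from the associativity and unit properties of the cotensor product over bi-Galois objects. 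The braiding is then extracted from Theorem~\ref{thm:generalbraiddual}(4): for $X\in{_A\YD^H}(H,H)$ and $Y\in{_B\YD^H}(H,H)$ the theorem provides a natural isomorphism $\beta_{X,Y}:X\ot Y\to{^{\partial X}Y}\ot X$ lying in the graded component ${_{A\cotens^H B}\YD^H}(H,H)$, invertible because $\partial X$ is bi-Galois, with naturality in both entries covered directly by Lemma~\ref{generalbetaproperties}.

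The hard part will be the verification of the heptagon and hexagon conditions of~\eqref{hexagonsbicat}. These reduce respectively to identities \eqref{coherencebeta2} and \eqref{coherencebeta1} of Lemma~\ref{generalbetaproperties}, but only after transporting the twisted factors ${^{\partial X}Y}$ across the strong monoidal structure of $\phi(\partial X)$ and invoking the bi-Galois identification $A\cotens^H\ol A\cong H$ to close the diagrams on the nose. Lemma~\ref{liftingtocenter} ensures that the braiding lifts coherently along the equivalences $-\cotens^H A$ between the relevant relative centers, so that each coherence diagram reduces to a chase inside a single category; still, the nesting of cotensor products with bi-Galois objects and their inverses makes the bookkeeping the most delicate and error-prone step.

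For ${_1\YD^\circledast}$, the parallel construction gives the second statement. The grading sends $(C,M)$ to $[C]\in\coGal^{op}$, with the opposite arising because horizontal composition of one-cells in ${_1\YD^\circledast}$ is the reversed tensor product $D\ot_K C$ of bimodule coalgebras. The action is $\phi(C)=C\ot_H-$, which is a monoidal equivalence by Corollary~\ref{GaloisCoobjectlifts}, and the braiding is supplied by Theorem~\ref{thm:generalbraid}. The resulting structure is right crossed over $\coGal^{op}$ rather than left crossed over $\coGal$, for precisely the same opposite-composition reason that yielded left crossing over $\Gal$ in the first case.
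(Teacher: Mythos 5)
Your proposal is correct and takes essentially the same route as the paper's proof, which likewise assembles the gradings from Examples~\ref{ex:*examples} and~\ref{circledexamples}, the groupoid (op)actions from Theorem~\ref{thm:generalbraiddual}(2) and Corollary~\ref{GaloisCoobjectlifts}, and the braiding together with its heptagon/hexagon coherence from Theorem~\ref{thm:generalbraiddual}(4), Theorem~\ref{thm:generalbraid} and Lemma~\ref{generalbetaproperties}. The one direction you leave hedged is resolved in the paper by taking $\phi(A)=-\cotens^H\ol A$ rather than $-\cotens^H A$, so that the grading shifts as $B\mapsto A\cotens^H B\cotens^H\ol A$, which is exactly the left-crossed conjugation $\partial({}^A Y)=A\,\partial Y\,A^{-1}$ you require.
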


\begin{proof}
In Examples~\ref{ex:*examples} and Examples~\ref{circledexamples} we described the $\Gal$-grading of ${_\circledast\YD^1}$ and the $\coGal^{op}$-grading of ${_1\YD^\circledast}$. Furthermore, by Theorem~\ref{thm:generalbraiddual}(2) and Corollary~\ref{GaloisCoobjectlifts} we know that we have well-defined opmorphisms\footnote{Since in this example, both groupoids have the same set of objects, this is in fact also a morphism of groupoids.} of groupoids
\begin{eqnarray*}
\Gal \to \Aut^\ot({_\circledast\YD^1}),\ && A \mapsto -\cotens^H\ol A : {_B\YD^H}(H,H)\to {_{A\cotens^HB\cotens^H\ol A}\YD^{H}}(H,H)\\
\coGal \to \Aut^\ot({_1\YD^\circledast}),\ && C \mapsto C\ot_H- : {_H\YD^D}(H,H)\to {_H\YD^{C\ot_H D\ot_H\ol C}}(H,H)
\end{eqnarray*}
where $A,B$ are an $(H,H)$-bi-Galois objects and $\ol A$ is the inverse of $A$ in $\Gal$; and $C,D$  are $(H,H)$-bi-Galois co-objects and $\ol C$ is the inverse of $C$ in $\coGal$.
Finally, the crossed-braided structure follows from the braiding described in Theorem~\ref{thm:generalbraiddual}(4) and Theorem~\ref{thm:generalbraid}, again specialized to the case where both appearing bialgebras coincide.
\end{proof}

As remarked before, rather than being a true bicategory, ${_\circledast\YD^1}$ is merely a disjoint union of monoidal categories of the form ${_\circledast\YD^1}(H,H)$, since there are no one-cells in ${_\circledast\YD^1}(H,K)$ when $H$ and $K$ are different. In fact, each of these endohom categories is a $\Gal(H,H)$-crossed braided monoidal category, generalizing Example~\ref{ex:Panaite}. This holds in general, as the next result states. In order to describe properly the structure of the more interesting bicategory ${_\circledast\YD^\circledast}$, we will combine the two crossed structures over $\Gal$ and $\coGal$ in the next subsection.

\begin{proposition}
Let $\sf G$ be a groupoid and $\Bb$ a right $\sf G$-crossed braided bicategory. For any $0$-cell $A\in \Bb$, the endohom category $\Bb(A,A)$ is a ${\sf G}(\partial A,\partial B)$-crossed braided monoidal category.
\end{proposition}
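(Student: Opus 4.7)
The plan is essentially to observe that the bicategorical definition was tailored so that, upon restriction to a single $0$-cell $A$, every piece of structure and every axiom specialises verbatim to the corresponding data and axioms of a right $G$-crossed braided monoidal category with $G={\sf G}(\partial A,\partial A)$. So the proof will be a systematic unpacking rather than any new computation.

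First I would note that the endohom category $\Bb(A,A)$ of any bicategory is a monoidal category, with tensor product given by horizontal composition $\ot$. The grading $2$-functor $\partial:\Bb\to {\sf G}$ restricts to a (strict) monoidal functor $\Bb(A,A)\to {\sf G}(\partial A,\partial A)$ onto the vertex group viewed as a discrete monoidal category. This yields the required decomposition
\[
\Bb(A,A)=\bigsqcup_{g\in {\sf G}(\partial A,\partial A)}\Bb_g(A,A),
\]
together with the compatibility $\partial(X\ot Y)=\partial X\,\partial Y$, giving $\Bb(A,A)$ the structure of a ${\sf G}(\partial A,\partial A)$-graded monoidal category.

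Next I would restrict the groupoid opmorphism $\phi:{\sf G}^{op}\to\Aut^\ot(\Bb)$ to the vertex group at $\partial A$. Since $\phi^0=\partial$ sends $A$ to $\partial A$, this restriction produces, for each $g\in {\sf G}(\partial A,\partial A)$, a strong monoidal autoequivalence $(-)^g:\Bb(A,A)\to \Bb(A,A)$, with $(X\ot Y)^g\cong X^g\ot Y^g$ and $(X^g)^h\cong X^{gh}$. The opfunctoriality of $\phi$ guarantees that this is a right group action on $\Bb(A,A)$ by strong monoidal autoequivalences. The compatibility of the action with the grading, namely $\partial(X^g)=g^{-1}(\partial X)g$, is part of the data of a crossed braided bicategory and specialises here to the conjugation condition required of a right $G$-crossed monoidal category.

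Finally, for $X,Y\in\Bb(A,A)$ (so in particular $B=A$), the bicategorical braiding $\beta_{X,Y}:X\ot Y\to Y\ot X^{\partial Y}$ already has exactly the form required for a crossed braiding, and the heptagon/hexagon axioms \eqref{hexagonsbicat} collapse to the two familiar hexagons of a $G$-crossed braided monoidal category: in the left-hand diagram of \eqref{hexagonsbicat}, the bottom isomorphism $Z\ot(X\ot Y)^{\partial Z}\cong Z\ot X^{\partial Z}\ot Y^{\partial Z}$ is precisely the monoidal structure isomorphism of the autoequivalence $(-)^{\partial Z}$, while the right-hand diagram becomes the second hexagon verbatim. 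The only subtlety, which I expect to be the main (though still routine) obstacle, is to keep careful track of the variance of composition induced by $\phi:{\sf G}^{op}\to \Aut^\ot(\Bb)$, so as to verify that the restricted action is genuinely a right action and that $(X^g)^h=X^{gh}$ (with $gh$ computed in ${\sf G}$) matches the bicategorical convention; beyond this bookkeeping, every axiom is a tautological restriction of the bicategorical one.
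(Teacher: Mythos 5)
Your proof is correct and matches the paper's (implicit) justification: the paper states this proposition without proof, treating it as the routine specialization of every piece of the bicategorical structure (grading, opmorphism $\phi$, conjugation compatibility, braiding and its heptagon/hexagon axioms) to the single $0$-cell $A$, which is exactly the unpacking you carry out. The only point worth making explicit is that the result is a ${\sf G}(\partial A,\partial A)$-crossed braided monoidal category in the paper's weakened sense — the action is by strong monoidal autoequivalences with coherent isomorphisms $(X\ot Y)^g\cong X^g\ot Y^g$ and $(X^g)^h\cong X^{gh}$, rather than Turaev's strict automorphisms — and your argument correctly operates at that level of strictness.
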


\subsection{Double Groupoid-crossed bicategories}

\begin{definition}\label{def:groupoidcrossedbicat}
Let $\Bb$ be a bicategory and ${\sf H}$ and ${\sf G}$ be two groupoids. We say that $\Bb$ is $({\sf H},{\sf G})$-double crossed braided if and only if $\Bb$ is equiped with the following structure.
\begin{itemize}
\item $\Bb$ is ${\sf H}\times{\sf G}$-graded, via the $2$-functors $\partial':\Bb\to {\sf H}$ and $\partial:\Bb\to {\sf G}$. Explicitly, we write for each pair of zero-cells $A,B$ in $\Bb$ the gradings as 
$$\Bb(A,B)
=\bigsqcup_{{h\in {\sf H}(\partial' A,\partial' B)}} {_h\Bb}(A,B)
=\bigsqcup_{{g\in {\sf G}(\partial A,\partial B)}} {\Bb_g}(A,B)
=\bigsqcup_{^{h\in {\sf H}(\partial' A,\partial' B)}_{g\in {\sf G}(\partial A,\partial B)}} {_h\Bb_g}(A,B).$$
\item There are opmorphisms of groupoids $\phi':{\sf H}\to \Aut^\ot(\Bb_{\Id_{\sf G}})$ and $\phi:{\sf G}^{op}\to \Aut^\ot(_{\Id_{\sf H}}\Bb)$, with ${\phi'}^0=\partial'$ and $\phi^0=\partial$. Here $\Bb_{\Id_{\sf G}}$ denotes the full sub-bicategory of $\Bb$ consisting of those components with a trivial $\sf G$-grading and ${_{\Id_{\sf H}}\Bb}$ denotes the full sub-bicategory of $\Bb$ consisting of those components with a trivial $\sf H$-grading. More explicitly, the above means that for any pair of zero-cells $A,B$ in $\Bb$ and for any morphisms $h\in {\sf H}(\partial'A, \partial'B)$ %$h'\in {\sf H}(\partial' A,\partial' A)$, 
and $g\in{\sf G}(\partial A,\partial B)$ %and $g'\in{\sf G}(\partial B,\partial B)$ 
we have strong monoidal functors
$$\phi'(h)={^h(-)}:\Bb_{\partial B}(B,B)\to \Bb_{\partial A}(A,A) , \qquad \phi(g)=(-)^g:{_{\partial' A}\Bb}(A,A)\to {_{\partial' B}\Bb}(B,B).$$
\item The opmorphisms $\phi'$ and $\phi$ are compatible with the grading in the sense that (using same notation as in the previous item) 
$${^h(-)}:{_{h'}\Bb}_{\partial B}(B,B)\to {_{hh'h^{-1}}\Bb}_{\partial A}(A,A), \qquad (-)^g:{_{\partial' A}\Bb_{g'}}(A,A)\to {_{\partial' B}\Bb_{g^{-1}g'g}}(B,B)$$
for all $h'\in {\sf H}(\partial' B,\partial' B)$ and $g'\in{\sf G}(\partial A,\partial A)$. In other words
$$\partial'({^h X})=h\partial' X h^{-1}, \qquad \partial(Y^g)=g^{-1}\partial Yg,$$
for all $X\in \Bb_{\partial B}(B,B)$ and $Y\in {_{\partial' A}\Bb}(A,A)$. 
\item There are braidings of the following type: for $X\in{_{\partial' A}\Bb_{g'}}(A,A)$, $Y\in{_h\Bb_g}(A,B)$ and $Z\in {_{h'}\Bb_{\partial B}}(B,B)$, we have
$$\beta'_{Y,Z}:Y\ot Z\to {^{\partial' Y}Z}\ot Y\in {_{hh'}\Bb_{g}}(A,B),\qquad \beta_{X,Y}:X\ot Y\to Y\ot X^{\partial Y} \in{_h\Bb_{g'g}}(A,B)$$
subjected to the hexagon conditions \eqref{hexagonsbicat} and their left counterparts for $\beta'$.
\end{itemize}
\end{definition}

\begin{theorem}
The bicategory ${_\circledast\YD^\circledast}$ described in Example~\ref{circledexamples} is a $(\Gal,\coGal^{op})$ double crossed braided bicategory.
\end{theorem}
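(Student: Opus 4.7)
The plan is to verify each of the four ingredients of Definition~\ref{def:groupoidcrossedbicat}, reducing each verification to a result already established. First, the bigrading $(\partial',\partial):{_\circledast\YD^\circledast}\to\Gal\times\coGal^{op}$ is simply the product of the two $2$-functors already constructed in Examples~\ref{ex:*examples}--\ref{circledexamples}, so that for any pair of zero-cells $H,K$ the component of bidegree $(A,C)$ is precisely ${_A\YD^C}(H,K)$. The sub-bicategory with trivial $\coGal^{op}$-grade is then ${_\circledast\YD^1}$ and that with trivial $\Gal$-grade is ${_1\YD^\circledast}$, so the required opmorphisms of groupoids are essentially those provided by the preceding proposition: namely $\phi'(A)=-\cotens^K\ol A$ for a bi-Galois object $A\in\Gal(H,K)$ and $\phi(C)=C\ot_H-$ for a bi-Galois co-object $C\in\coGal^{op}(H,K)$ (strictly speaking, the preceding proposition treated only the endo case $H=K$, but the extension to general $A$ and $C$ is supplied by Theorem~\ref{thm:generalbraiddual}(2) and Corollary~\ref{GaloisCoobjectlifts} respectively). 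Functoriality on composable morphisms then follows from the associativity of the balanced (co)tensor products and the inversion formulas recalled in Section~\ref{se:Galois}.

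For compatibility of $\phi'$ and $\phi$ with the grading, a direct computation using the composition laws in $\Gal$ and $\coGal^{op}$ shows that $\phi'(A)$ sends a $\Gal$-component $A'$ to $A\cotens^K A'\cotens^K\ol A$, which is precisely the conjugate $hh'h^{-1}$ required by Definition~\ref{def:groupoidcrossedbicat}; dually $\phi(C)$ sends a $\coGal^{op}$-component $D$ to $C\ot_H D\ot_H\ol C$, matching $g^{-1}g'g$ in $\coGal^{op}$ (the apparent sign flip reflecting the opposite composition convention in $\coGal^{op}$). For the braidings, Theorem~\ref{thm:generalbraid} furnishes, for any $X\in {_H\YD^D}(H,H)$ and $Y\in{_A\YD^C}(H,K)$, a map
\[
\beta_{X,Y}:X\ot Y\longrightarrow Y\ot(C\ot_H X)=Y\ot X^{\partial Y},
\]
while Theorem~\ref{thm:generalbraiddual}(4) furnishes, for $Z\in{_B\YD^K}(K,K)$, the dual map
\[
\beta'_{Y,Z}:Y\ot Z\longrightarrow(Z\cotens^K\ol A)\ot Y={^{\partial' Y}Z}\ot Y,
\]
and the coherence identities established in those theorems translate verbatim into the hexagon diagrams~\eqref{hexagonsbicat} of the right $\coGal^{op}$-crossing and their left-handed counterparts for the $\Gal$-crossing.

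The main obstacle is not a deep conceptual difficulty but a careful bookkeeping exercise: one must identify the codomains $C\ot_H X$ and $Z\cotens^K\ol A$ with $X^{\partial Y}$ and ${^{\partial' Y}Z}$ respectively, verify that these lie in the correctly twisted bigraded components of ${_\circledast\YD^\circledast}$, and check that the two crossings $\phi'$ and $\phi$ act independently -- they operate on the bicomodule-algebra side and the bimodule-coalgebra side of the YD data respectively, and therefore do not interfere. Once this translation between the notations of Theorems~\ref{thm:generalbraid}--\ref{thm:generalbraiddual} and Definition~\ref{def:groupoidcrossedbicat} is set up, all the remaining conditions of the definition reduce to statements already proven.
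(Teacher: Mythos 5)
Your proposal is correct and follows essentially the same route as the paper: the gradings are taken from Examples~\ref{ex:*examples} and \ref{circledexamples}, the groupoid opmorphisms $\phi'$ and $\phi$ from Theorem~\ref{thm:generalbraiddual}(2) and Corollary~\ref{GaloisCoobjectlifts}, and the two braidings from Theorem~\ref{thm:generalbraiddual}(4) and Theorem~\ref{thm:generalbraid}. The additional bookkeeping you spell out (identifying $C\ot_H X$ with $X^{\partial Y}$, the conjugation formulas for the gradings, and the independence of the two crossings) is exactly what the paper leaves implicit.
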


\begin{proof}
The gradings of ${_\circledast\YD^\circledast}$ by $\Gal$ and $\coGal^{op}$ have been described in Example~\ref{ex:*examples} and Example~\ref{circledexamples}. From Theorem~\ref{thm:generalbraiddual}(2) and Corollary~\ref{GaloisCoobjectlifts} we obtain the groupoid opmorphisms $\phi'$ and $\phi$, given by
\begin{eqnarray*}
\Gal \to \Aut^\ot({_\circledast\YD^1}),\ && A \mapsto -\cotens^K\ol A : {_B\YD^K}(K,K)\to {_{A\cotens^KB\cotens^K\ol A}\YD^{H}}(H,H)\\
\coGal \to \Aut^\ot({_1\YD^\circledast}),\ && C \mapsto C\ot_H- : {_H\YD^D}(H,H)\to {_K\YD^{C\ot_H D\ot_H\ol C}}(K,K)
\end{eqnarray*}
where $A$ is an $(H,K)$-bi-Galois object with inverse $\ol A$ in $\Gal$ and $C$ is a $(K,H)$-bi-Galois co-object with inverse $\ol C$ in $\coGal$.
The crossed-braided structure is obtained from the braidings described in Theorem~\ref{thm:generalbraiddual}(4) and Theorem~\ref{thm:generalbraid}. 
\end{proof}

In order to describe the structure of the full category ${_*\YD^*}$ graded over {\em all} YD-data, we need one last generalization of Definition~\ref{def:groupoidcrossedbicat}.

\begin{definition}
Let $\sf D$ and $\sf C$ be two categories and $\sf H$, $\sf G$ two groupoids together with functors  $\iota':{\sf H}\to {\sf D}$ and $\iota:{\sf G}\to {\sf C}$. We say that a bicategory $\Bb$ is {\em $({\sf D},{\sf C})$-double graded $({\sf H},{\sf G})$-double crossed braided} if and only if:
\begin{itemize}
\item $\Bb$ is $\sf D\times\sf C$-graded via $\partial':\Bb\to \sf D$ and $\partial:\Bb\to \sf C$.
\item There are opmorphisms of groupoids $\phi':{\sf H}\to \Aut^\ot(\Bb_{1_{\sf G}})$ and $\phi:{\sf G}^{op}\to \Aut^\ot(_{1_{\sf H}}\Bb)$, with $\iota'\circ {\phi'}^0=\partial'$ and $\iota\circ \phi^0=\partial$.
\item The opmorphisms $\phi'$ and $\phi$ are compatible with the grading in the sense that
$$\partial'({^h X})=\iota'(h)\partial' X\iota'(h^{-1}), \qquad \partial(Y^g)=\iota(g^{-1})\partial Y\iota(g),$$
for all morphisms $h\in\sf H$, $g\in \sf G$ and one-cells $X$ and $Y$ in $\Bb$, such that sources and targets of $\iota'(h)$ and $\iota(g)$ match with the gradings on $X$ and $Y$ for the above formula to make sense.
\item There are braidings of the following type: for $X\in{_{\partial' A}\Bb_{d'}}(A,A)$, $Y\in{_c\Bb_{\iota(g)}}(A,B)$, $Y'\in{_{\iota'(h)}\Bb_d}(A,B)$ and $Z\in {_{c'}\Bb_{\partial B}}(B,B)$, we have
$$\beta'_{Y,Z}:Y\ot Z\to {^{\partial' Y}Z}\ot Y\in {_{\iota'(h)c'}\Bb_{d}}(A,B),\qquad \beta_{X,Y}:X\ot Y\to Y\ot X^{\partial Y} \in{_c\Bb_{d'\iota(g)}}(A,B).$$
%subjected to suitable hexagon conditions.
\end{itemize}
\end{definition}

The braidings given in Theorem~\ref{thm:generalbraiddual}(4) and Theorem~\ref{thm:generalbraid} can now exactly be reformulated as the following result.

\begin{theorem}
The bicategory ${_*\YD^*}$ is a $(\BA,\BC^{op})$-double graded $(\Gal,\coGal^{op})$-double crossed braided bicategory.
\end{theorem}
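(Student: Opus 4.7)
The plan is to assemble the structure required by Definition~\ref{def:groupoidcrossedbicat} piece by piece; all of the substantial work has already been carried out in the earlier sections, so this proof is essentially an exercise in organisation.

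First, the $(\BA,\BC^{op})$-double grading is essentially already defined in Examples~\ref{ex:*examples}: the $2$-functor $\partial:{_*\YD^*}\to\YD$ sends a $1$-cell in ${_A\YD^C}(H,K)$ to the class of the pair $(A,C)$, and post-composing with the two forgetful functors $\YD\to\BA$ and $\YD\to\BC^{op}$ yields $\partial'$ and $\partial$; its $2$-functoriality is exactly the content of Proposition~\ref{prop:MonoidalGenYD}, which identifies the horizontal composition (given by the $k$-linear tensor product) of generalised YD modules with the one over $(A\cotens^K B,\,D\ot_K C)$. The functors $\iota':\Gal\hookrightarrow\BA$ and $\iota:\coGal^{op}\hookrightarrow\BC^{op}$ are the obvious inclusions recorded in the diagram following the definition of $\biGal$.

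Second, I would construct the opmorphisms $\phi'$ and $\phi$ on the sub-bicategories with trivial $\coGal^{op}$- and trivial $\Gal$-grading respectively. For a bi-Galois object $A$, Theorem~\ref{thm:generalbraiddual}(2) supplies the strong monoidal equivalence
\[-\cotens^H A\,:\, {_B\YD^H}(H,H) \longrightarrow {_{\ol A\cotens^H B\cotens^H A}\YD^K}(K,K),\]
and for a bi-Galois co-object $C$, Corollary~\ref{GaloisCoobjectlifts} supplies
\[C\ot_H -\,:\, {_H\YD^D}(H,H) \longrightarrow {_K\YD^{C\ot_H D\ot_H\ol C}}(K,K).\]
Functoriality up to monoidal natural isomorphism, and hence the opmorphism property, follows from the associativity (up to canonical isomorphism) of the (co)tensor products, together with the fact that $\Gal$ and $\coGal$ are groupoids with $A^{-1}=\ol A$ and $C^{-1}=\ol C$. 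The grading compatibilities $\partial'({^A X}) = \iota'(A)\cdot \partial' X\cdot \iota'(A)^{-1}$ and $\partial(Y^C) = \iota(C)^{-1}\cdot \partial Y\cdot \iota(C)$ can be read off directly from the displayed formulas.

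Finally, the two braidings come straight from the results of Section~\ref{sec:centerYD}. For $X\in {_H\YD^H}$ and $Y\in {_A\YD^C}(H,K)$ with $C$ a bi-Galois co-object, Theorem~\ref{thm:generalbraid}, specialised to $D=H$, gives the braiding $\beta_{X,Y}:X\ot Y\to Y\ot X^{\partial Y}$, whose required heptagon/hexagon identities are encoded in equations~\eqref{coherencebeta1}--\eqref{coherencebeta2} of Lemma~\ref{generalbetaproperties} together with the $E$-prebraided axioms of Theorem~\ref{thm:centerbiact}. Symmetrically, Theorem~\ref{thm:generalbraiddual}(4) supplies the dual braiding $\beta'_{Y,Z}:Y\ot Z\to {^{\partial' Y}Z}\ot Y$ for $Y\in {_A\YD^C}(H,K)$ with $A$ a bi-Galois object and $Z\in {_K\YD^K}$. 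The hard part will be essentially bookkeeping: carefully aligning the twisted bicomodule and bimodule structures, the opposite-groupoid convention in $\coGal^{op}$, and the left/right conventions of $\phi'$ versus $\phi$, so that each hexagon instance lands in the prescribed graded component ${_c\Bb_{d'\iota(g)}}$ or ${_{\iota'(h)c'}\Bb_d}$. No genuinely new category-theoretic input is needed, since both the equivalences $\phi',\phi$ and the coherence of the braidings have already been established in Theorems~\ref{thm:generalbraid}, \ref{thm:generalbraiddual}, Corollary~\ref{GaloisCoobjectlifts} and Lemma~\ref{generalbetaproperties}.
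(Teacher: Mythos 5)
Your proposal is correct and follows essentially the same route as the paper, which states this theorem as a direct reformulation of the braidings from Theorem~\ref{thm:generalbraid} and Theorem~\ref{thm:generalbraiddual}, with the gradings from Examples~\ref{ex:*examples} and the opmorphisms from Theorem~\ref{thm:generalbraiddual}(2) and Corollary~\ref{GaloisCoobjectlifts}; indeed the paper offers no further argument beyond this assembly. Your additional remarks on the bookkeeping of conventions (the $\coGal^{op}$ direction and the left/right roles of $\phi'$ versus $\phi$) are consistent with, and slightly more explicit than, what the paper records.
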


\begin{remarks}
One can also consider {\em rigid} groupoid-crossed braided bicategories, where the grading of the dual of object $X$ is given exactly by the inverse in the groupoid of the grading of $X$. By considering the subbicategory of ${_\circledast\YD^\circledast}$ whose $1$-cells are finite-dimensional generalized Yetter-Drinfeld modules, one could also show this is rigid. 

As mentioned earlier, one can refine the notion of groupoid gradings as described above by disjoint unions, by freely completing those with coproducts of objects of different degree. Objects and morphisms in such a completion are then described as families of objects of homogeneous degree. 

Finally, it would be interesting to explore the use of groupoid-crossed braided bicategories, and in particular the bicategory of generalized Yetter-Drinfeld modules in Homotopy Quantum Field Theories. For example the crossed structures appearing in \cite{Paul} and \cite{Vir} share interesting similarities with our constructions.
This should be the subject of future investigations.
\end{remarks}


\begin{thebibliography}{1}

\bibitem{AV}
R.\ Aziz and J.\ Vercruysse, \textit{Bi-Galois co-objects and Hopf categories}, in preparation.


\bibitem{brzezinski} T. Brzezi\'nski, \textit{Hopf-cyclic homology with contramodule coefficients}, in \textit{Quantum groups and noncommutative spaces}, 1--8, 2008.


\bibitem{CDV}
S.\ Caenepeel, E.\ De Groot, and J.\ Vercruysse, \textit{Galois theory for Comatrix Corings: Descent theory, Morita theory, Frobenius and separability properties, Trans. Amer. Math. Soc.} \textbf{359} (2007), 185--226.

\bibitem{BookStef} S. Caenepeel, G. Militaru, and S. Zhu, \textit{Frobenius and separable functors for generalized module categories and nonlinear equations}, Springer, 2004.

\bibitem{canaepeel} S. Caenepeel, F. Van Oystaeyen, and Y. Zhang, \textit{The Brauer group of Yetter-Drinfeld module algebras, Trans. Amer. Math. Soc.} \textbf{349} (1997), 3737--3771.

\bibitem{Drinfeld}
V.\ Drinfeld, \textit{Quantum groups}, in A. Gleason (ed.), \textit{Proc. Int. Congr. Math. 1986}, Vol. 1, 798--820 1987



\bibitem{Femic}
B.\ Femi\'c,
\textit{Turaev bicategories and generalized Yetter-Drinfeld modules in 2-categories, Israel J. Math.} \textbf{241} (2021), 395--432.

\bibitem{gelaki} S. Gelaki, D. Naidu, and D. Nikshych, \textit{Centers of graded fusion categories, Algebra \& Number Theory}, \textbf{3} (2009), 959--990.

\bibitem{Paul}
P.\ Gro\ss kopf, \textit{2D HQFTs and Frobenius $(\Gg,\Vv)$-categories}, preprint, 2025, arXiv:2501.10113.

\bibitem{hajac} P. M. Hajac, M. Khalkhali, B. Rangipour, and Y. Sommerh\"auser, \textit{Stable Anti-Yetter-Drinfeld Modules, C. R. Acad. Sci. Paris}, Ser. I \textbf{336} (2003).

\bibitem{hassanzadeh} M. Hassanzadeh, M. Khalkhali, and I. Shapiro, \textit{Monoidal categories, 2-traces, and cyclic cohomology, Canadian Mathematical Bulletin}, \textbf{62} (2019), 293--312.

\bibitem{Laugwitz}
R.\ Laugwitz, Robert, C.\ Walton, M.\ Yakimov,
\textit{Reflective centers of module categories and quantum $K$-matrices, Forum Math. Sigma} \textbf{13} (2025), Paper No. e95.

\bibitem{maclane} S. Mac Lane. \textit{Categories for the working mathematicians}, Springer-Verlag, 1971.

\bibitem{ma:center} S. Majid,  \textit{Representations, duals and quantum doubles of monoidal categories} in \textit{Proceedings of the Winter School ``Geometry and Physics"}, 197 -- 206. Circolo Matematico di Palermo, 1991.

\bibitem{ma:book} S. Majid. \textit{Foundations of quantum group theory}, Cambridge University Press, 2000.

\bibitem{mesablishvili}
B. Mesablishvili,
\textit{Monads of effective descent type and comonadicity,
Theory Appl. Categ.} \textbf{16} (2006), 1--45.

\bibitem{panaite} F. Panaite and M. D. Staic, \textit{Generalized (anti) Yetter-Drinfeld modules as components of a braided T-category, Israel Journal of Mathematics}, \textbf{158} (2007), 349--365.

\bibitem{Schauenburg:biGalois}
P. Schauenburg, \textit{Hopf-Galois and bi-Galois extensions, Fields Inst. Commun.}, \textbf{43}, American Mathematical Society, Providence, RI, 2004, 469--515.

\bibitem{Vir} K.\ S\"ozer and A.\ Virelizier, \textit{Monoidal categories graded by crossed modules and 3-dimensional HQFTs, Advances in Math.}, \textbf{428} (2023) , 109155.

\bibitem{TuraevArxiv}
V.\ G.\ Turaev, \textit{Homotopy field theory in dimension 3 and crossed group-categories}, preprint, 2000,  arXiv:math.GT/0005291.

\bibitem{TuraevBook}
V.\ G.\ Turaev, 
\textit{Homotopy quantum field theory}, with appendices by Michael M\"uger and Alexis Virelizier, \textit{EMS Tracts in Mathematics} \textbf{10}, European Mathematical Society, Z\"urich, 2010.
\end{thebibliography}
\end{document}